\theoremstyle{plain}
\newtheorem{theorem}{Theorem}[section]
\newtheorem{proposition}[theorem]{Proposition}
\newtheorem{lemma}[theorem]{Lemma}
\newtheorem{corollary}[theorem]{Corollary}
\newtheorem*{thma}{Theorem A}
\newtheorem*{thmb}{Theorem B}
\newtheorem*{thmc}{Theorem C}
\newtheorem*{thmd}{Theorem D}
\newenvironment{mysubsection}[2][]
{\begin{subsec}\begin{upshape}\begin{bfseries}{#2.}
			\end{bfseries}{#1}}
		{\end{upshape}\end{subsec}}
\theoremstyle{definition}
\newtheorem{definition}[theorem]{Definition}
\newtheorem{examples}[theorem]{Examples}
\newtheorem{ack}[theorem]{Acknowledgements}
\newtheorem{subsec}[theorem]{}
\theoremstyle{remark}
\newcommand{\bX}{{\bf X}}
\newcommand{\Aut}{\operatorname{Aut}}
\newcommand{\AutG}{\Aut^G}
\newcommand{\aut}{\operatorname{aut}}
\newcommand{\map}{\operatorname{map}}
\newcommand{\Hom}{\operatorname{Hom}}
\newcommand{\Id}{\operatorname{Id}}
\newcommand{\Ker}{\operatorname{Ker}}
\newcommand{\nprod}{\bX}
\begin{document}
\title{Equivariant Self-homotopy equivalences of product spaces}
\author{Gopal Chandra Dutta}
\author{Debasis Sen}
\author{Ajay Singh Thakur}

\address{Department of Mathematics and Statistics\\
Indian Institute of Technology, Kanpur\\ Uttar Pradesh 208016\\India}

\email{gopald@iitk.ac.in}
	
\address{Department of Mathematics and Statistics\\
Indian Institute of Technology, Kanpur\\ Uttar Pradesh 208016\\India}

\email{debasis@iitk.ac.in}

\address{Department of Mathematics and Statistics\\
Indian Institute of Technology, Kanpur\\ Uttar Pradesh 208016\\India}

\email{asthakur@iitk.ac.in}

\date{\today}
	
\subjclass[2010]{Primary 55P10  ; \ Secondary: 55P15, 55P91, 55Q05.}
\keywords{Equivariant self homotopy equivalences, $G$-reducibility, product spaces}
	
\begin{abstract}
Let $G$ be a finite group. We study the group of $G$-equivariant self-homotopy equivalences of product of $G$-spaces. For a product of $n$-spaces, we represent it as product of $n$-subgroups under the assumption of equivariant reducibility. Further we describe each factor as a split short exact sequence. Also we obtain an another kind of factorisation, called $LU$ type decomposition, as product of two subgroups. 
\end{abstract}
\maketitle

\maketitle
	
\section{Introduction}
For a pointed space $X$, let $\Aut(X)$ denote the set of homotopy classes of based self homotopy equivalences. This set is a group under composition of maps \cite{Gshert, Oyse}. A natural approach for understanding $\Aut(X)$ would be to start from some decomposition of $X$, for example as a product or wedge of simpler spaces, and then describe self-homotopy equivalences of $X$ by in terms of those of the factors. The group $\Aut(X\times Y)$ was studied in the past by many authors (cf. \cite{ AYsep, Gps, PShp, PRshe, Sheps, ST}). In particular Sawashita considered the group $\Aut(X\times Y)$ when $X$ and $Y$ are spheres (\cite{Sheps}).  We have projections $p_X: X\times Y \to X, ~~p_Y: X \times Y\to Y$ and inclusions $\iota_X: X\hookrightarrow X\times Y$ and $\iota_Y: Y \hookrightarrow X\times Y$, determined by the base point. For a map $f: X\times Y \to X\times Y,$ let $$f_{XX}:=p_{X}\circ f\circ \iota_{X} : X\to X, ~~ f_{YY} := p_{Y}\circ f\circ \iota_{Y}: Y\to Y.$$ 
A self homotopy equivalence $f$ is called \emph{reducible} if $f_{XX}, f_{YY}$ are self-homotopy equivalences. When self-homotopy equivalences of $X\times Y$ are reducible, Heath and Booth obtained a split exact sequence 
$$ 0\to [X,\aut_{1}(Y)] \to \Aut(X\times Y)\xrightarrow{} \Aut(X)\times \Aut(Y)\to 0, $$
under the condition $[Y, \aut_1(X)] = \{\ast\}$ (\cite{Gps, Hgsep}). Here $\aut_1(X)$ is the connected component of $\Id$ of the space of based self homotopy equivalences of $X$. The group $\Aut(X\times Y)$ has two natural subgroups $\Aut_X(X\times Y)$ and $\Aut_Y(X\times Y)$, which are self-equivalences of $X\times Y$ over $X$ and $Y$ respectively. Later Pave\v{s}i\'{c} showed that under reducibility (without the condition $[Y, \aut_1(X)] = \{\ast\}$), the group $\Aut(X\times Y)$ can be decomposed as product of the subgroups $\Aut_X(X\times Y)$ and $\Aut_Y(X\times Y)$  (see \cite{PShp, PRshe}),

\begin{equation}\label{eqdecom}
\Aut(X\times Y) = \Aut_X(X\times Y)\cdot \Aut_Y(X\times Y).
\end{equation}
\noindent Further each of the factors fits into split exact sequences
 $$0\to [X,\aut_{1}(Y)]\to \Aut_{X}(X\times Y)\xrightarrow{} \Aut(Y)\to 0,$$ 
 and  
$$0\to [Y,\aut_{1}(X)]\to \Aut_{Y}(X\times Y)\xrightarrow{} \Aut(X)\to 0.$$
The result of Heath and Booth can be deduced as corollary of it. Moreover the decomposition of Equation \ref{eqdecom} was generalised for $n$-product of spaces in terms of $n$-subgroups. Further a $LU$ kind of decomposition was obtained for higher products (\cite{POg, PRshe}). 

In this paper, we consider equivariant self homotopy equivalences of product spaces and obtain equivariant versions of the results mentioned about. Let $G$ be a finite group. A based $G$-space is a $G$-space with a point which is $G$-fixed. 
%A based $G$-map $f: X\to Y$ between based $G$-spaces is a continuous map $f$ which is $G$-equivariant, i.e. $f(gx) = gf(x)$ for all $g\in G, ~~x\in X$ and preserves the base pont. 
The space of based $G$-equivariant maps is denoted by $\map^{G}(X,Y)$. Let $[X,Y]_{G}$ denote the set of  $G$-homotopy classes of based maps. For a based $G$-space $X$, we denote the space of based equivariant self-homotopy equivalences of $X$ by $\aut^G(X)$. We denote by $\AutG(X)$ the set of $G$-homotopy classes of elements of $\aut^G(X)$. The composition of maps induces a group operation on $\Aut^G(X)$. We call it the group of equivariant self-homotopy equivalences of $X$.  This group has been studied by several authors (cf. \cite{FS, SEG}). Our goal here is to study the group $\Aut^G(X\times Y)$, where $X, Y$ are based $G$-CW complexes and also for higher products. For our results we need the condition of $G$-reducibility.

%\begin{mysubsection}{$G$-reducible} We have projections $p_X: X\times Y \to X, ~~p_Y: X \times Y\to Y$ and inclusions $\iota_X: X\hookrightarrow X\times Y$ and $\iota_Y: Y \hookrightarrow X\times Y$, determined by the base point. Clearly these maps are $G$-maps. For a $G$-map $f\colon X\times Y \to X\times Y$, we will get four maps defined by $f_{XY}:=p_{X}\circ f\circ \iota_{Y} : Y\to X$, $f_{XX}:=p_{X}\circ f\circ \iota_{X} : X\to X$. and similarly $f_{YX} : X \to Y$ and $f_{YY}: Y\to Y$. 
%$$
%\xymatrix{
%	X\times Y \ar[rr]^{f}  &&  X\times Y \ar[dd]^{p_{X}} \\\\
%	Y \ar@{-->}[rr]_{f_{XY}}\ar[uu]^{\iota_{Y}} && X
%}
%$$ 
%
\begin{definition}
With notations as above, a $G$-self-homotopy equivalence $f$ of $X\times Y$ is called \emph{$G$-reducible} if $f_{XX} \in \aut^G(X)$ and $f_{YY} \in \aut^G(Y)$.
\end{definition}

\noindent As in the non-equivariant case, the group $\AutG(X\times Y)$ has two natural subgroups $\AutG_X(X\times Y) = \{ f\in \AutG(X\times Y):~~ p_X\circ f = p_X \}$ and $\AutG_Y(X\times Y) = \{ f\in \AutG(X\times Y):~~ p_Y\circ f = p_Y \}$. We show that the product of these two subgroups is the whole group: 
\begin{thma}\label{thma}
If all $G$-self-homotopy equivalences of $X\times Y$ are $G$-reducible then
 $$\AutG(X\times Y)=\AutG_{X}(X\times Y)\cdot\AutG_{Y}(X\times Y).$$ 
 \end{thma}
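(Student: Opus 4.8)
My plan is to follow the change-of-coordinates strategy of Pave\v{s}i\'{c}, carried out $G$-equivariantly. Start with an arbitrary class $[f]\in\AutG(X\times Y)$, choose a representing $G$-map $f$, and set $a:=p_X\circ f\colon X\times Y\to X$; by $G$-reducibility, $f_{XX}=a\circ\iota_X\in\aut^G(X)$ and $f_{YY}=p_Y\circ f\circ\iota_Y\in\aut^G(Y)$. Define a based $G$-map
\[
h\colon X\times Y\longrightarrow X\times Y,\qquad h(x,y)=\bigl(a(x,y),\,y\bigr),
\]
so that $p_X\circ h=p_X\circ f$ and $p_Y\circ h=p_Y$ \emph{on the nose}. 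If I can show $h\in\aut^G(X\times Y)$, then $[h]\in\AutG_Y(X\times Y)$; writing $\bar h$ for a $G$-homotopy inverse of $h$ and $g:=f\circ\bar h$, we get $p_X\circ g=p_X\circ f\circ\bar h=p_X\circ h\circ\bar h\simeq p_X$, so $[g]\in\AutG_X(X\times Y)$, while $[g]\cdot[h]=[f]$. Since $[f]$ was arbitrary and the reverse containment is immediate, this yields $\AutG(X\times Y)=\AutG_X(X\times Y)\cdot\AutG_Y(X\times Y)$.

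So everything reduces to the single assertion that $h$ is a $G$-homotopy equivalence, and this is where I expect the real work to lie. Since $X\times Y$ is a $G$-CW complex, by the equivariant Whitehead theorem it suffices to prove that $h^H$ is a weak homotopy equivalence for every subgroup $H\le G$. Because taking $H$-fixed points commutes with finite products, with the projections, and with the base-point inclusions, we have $h^H=(p_{X^H}\circ f^H,\,p_{Y^H})$, where $f^H$ is a homotopy equivalence of $X^H\times Y^H$ and $(f^H)_{X^HX^H}=(f_{XX})^H$ is a homotopy equivalence of $X^H$. Thus the equivariant claim follows from the purely non-equivariant lemma: \emph{if $\phi\colon A\times B\to A\times B$ is a homotopy equivalence of CW complexes with $\phi_{AA}=p_A\phi\iota_A$ a homotopy equivalence, then $(p_A\phi,\,p_B)$ is a homotopy equivalence} (a lemma of Pave\v{s}i\'{c} type, whose necessity is already visible non-equivariantly, e.g.\ the swap of $S^1\times S^1$ shows reducibility cannot be dropped).

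To prove that lemma I would regard $(p_A\phi,p_B)$ as a self-map of the trivial fibration $A\times B\to B$ lying over $\Id_B$; on the fibre over a point $b$ it restricts to $a\mapsto p_A\phi(a,b)$, which over the base-point component of $B$ is exactly $\phi_{AA}$. When the fixed-point spaces $Y^H$ are path-connected this forces the fibre map to be an equivalence over every point, and Dold's theorem on fibre homotopy equivalences then finishes the argument; in general one still needs the fibre maps over the remaining components of $Y^H$ to be equivalences, which I expect to follow from $\phi$ being an equivalence together with the standing hypothesis that \emph{every} $G$-self-equivalence of $X\times Y$ is $G$-reducible --- this global hypothesis, rather than merely the reducibility of the single map $f$, seems to be precisely what rules out ``component-mixing'' self-equivalences and forces the claim. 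Making this last point completely precise is, I think, the main obstacle; once it is in place, the rest of the proof is the short formal argument of the first paragraph.
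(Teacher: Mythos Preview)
Your overall strategy coincides with the paper's: the decomposition $f=g\circ h$ with $h=(f_X,p_Y)$ is exactly the factorisation
\[
(f_X,f_Y)=\bigl[p_X,\;f_Y\circ(\widetilde{f_X},p_Y)\bigr]\circ(f_X,p_Y)
\]
that the paper writes down, and in both arguments the only substantive step is to show that $h=(f_X,p_Y)$ is a $G$-equivalence. (Your verification that the left factor lies in $\AutG_X(X\times Y)$, via $p_X\circ g=p_X\circ h\circ\bar h\simeq p_X$, is in fact slightly slicker than the paper's, which re-applies reducibility to the composite.) Where you diverge is in the justification of that key step. The paper does not use a Dold/fibration argument; instead it first establishes, as its Proposition~2.3(b), that a $G$-map of the form $(q,p_Y)\colon X\times Y\to X\times Y$ is a $G$-equivalence \emph{if and only if} $q\circ\iota_X\in\AutG(X)$, by passing to $H$-fixed points and invoking the non-equivariant version from Pave\v{s}i\'{c}~[PShp, Prop.~2.3]. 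This yields $h\in\AutG(X\times Y)$ in one line from $f_{XX}\in\AutG(X)$, with no fibration machinery and without using that $f$ itself is an equivalence.

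Your hesitation about disconnected $Y^H$ is legitimate, but the rescue you sketch via the \emph{global} $G$-reducibility hypothesis does not work as written: that hypothesis constrains $G$-self-equivalences of $X\times Y$, whereas the fibre self-maps $x\mapsto p_{X^H}f^H(x,b)$ over a non-basepoint component of $Y^H$ are ordinary self-maps of $X^H$ not arising as restrictions of any $G$-map, so $G$-reducibility says nothing about them. The paper sidesteps the whole issue by quoting Pave\v{s}i\'{c}'s lemma on each fixed-point set; the standing hypotheses in that treatment include connectedness, so in effect the same assumption is implicitly in play on both sides. In short, the obstacle you flag is real, but the mechanism you propose does not close it; the intended argument is simply to apply Pave\v{s}i\'{c}'s Proposition~2.3 to each $X^H\times Y^H$ rather than to route through Dold's theorem.
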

 \noindent (See Theorem \ref{Mgred} below.)
 
 Under the assumption of $G$-reducibility, we have a natural map $\phi: \AutG(X\times Y)\to \AutG(X)\times \AutG(Y)$ defined by $\phi(f) = (f_{XX}, f_{YY})$. We show that the map $\phi$ gives a split exact sequence. This is an equivariaint analogue of Heath and Booth \cite{Gps}.
 
 \begin{thmb}\label{thmb}
Let $X, Y$ to be pointed connected $G$-CW complexes. If all $G$-self-homotopy equivalences of $X\times Y$ are $G$-reducible and $[Y,\aut_{1}(X)]_{G}=\{\ast \}$, then   we get a split short exact sequence
  $$0\to [X,\aut_{1}(Y)]_{G}\to \AutG(X\times Y)\xrightarrow[]{\phi} \AutG(X)\times \AutG(Y)\to 0.$$ 
  \end{thmb}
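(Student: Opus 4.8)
The plan is to follow the classical strategy of Heath–Booth, adapted equivariantly, by first constructing the map $\phi$ carefully, then exhibiting a splitting, and finally identifying the kernel. I would begin by checking that under $G$-reducibility $\phi(f) = (f_{XX}, f_{YY})$ is a well-defined group homomorphism: $G$-reducibility guarantees $f_{XX} \in \aut^G(X)$ and $f_{YY} \in \aut^G(Y)$ so the target makes sense, and one verifies $(g\circ f)_{XX} \simeq_G g_{XX}\circ f_{XX}$ using that the off-diagonal component $g_{XY} = p_X\circ g\circ \iota_Y \colon Y\to X$ composed with $f_{YX}$ becomes null-homotopic after restricting along $\iota_X$ — here one must be slightly careful and instead argue as in Pave\v{s}i\'c that the composite decomposes, or simply use that $\phi$ is the restriction of the evident map on $\Aut^G_X\cdot \Aut^G_Y$ provided by Theorem A. In fact the cleanest route is: by Theorem A every $f$ factors as $f = f_X\circ f_Y$ with $f_X\in \AutG_X(X\times Y)$, $f_Y\in \AutG_Y(X\times Y)$; then $\phi(f_X) = (\mathrm{(f_X)}_{XX}, \Id_Y)$ up to the off-diagonal term, and one reads off that $\phi$ is surjective because $\AutG(X)$ and $\AutG(Y)$ are hit by the obvious product equivalences $\alpha\times\Id_Y$ and $\Id_X\times\beta$.

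Next I would construct the splitting $s\colon \AutG(X)\times\AutG(Y)\to \AutG(X\times Y)$ by $s(\alpha,\beta) = \alpha\times\beta$, i.e. the product equivalence. This is manifestly $G$-equivariant, manifestly a homomorphism, and $\phi\circ s = \Id$ because $(\alpha\times\beta)_{XX} = \alpha$ and $(\alpha\times\beta)_{YY} = \beta$. So the sequence is split exact as soon as we show $\operatorname{Ker}\phi \cong [X,\aut_1(Y)]_G$. For the kernel, $f\in\operatorname{Ker}\phi$ means $f_{XX}\simeq_G \Id_X$ and $f_{YY}\simeq_G \Id_Y$. The standard non-equivariant argument writes such an $f$, up to homotopy and composition with elements already accounted for, as a map $X\times Y\to X\times Y$ of the form $(x,y)\mapsto \bigl(x\cdot c(x)(y)\text{-type correction}, y\bigr)$; more precisely, using the hypothesis $[Y,\aut_1(X)]_G = \{\ast\}$ one shows the $X$-component of $f$ is $G$-homotopic to $p_X$, so $f$ lies in $\AutG_X(X\times Y)$, and then $f$ is classified by its "twisting" datum, a based $G$-map $X\to \aut^G_?(Y)$ landing in the identity component, i.e. an element of $[X,\aut_1(Y)]_G$. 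The identification $\operatorname{Ker}\phi\cong [X,\aut_1(Y)]_G$ as groups (abelian, since $[X,\aut_1(Y)]_G$ inherits a group structure from the loop/$H$-space structure of $\aut_1(Y)$, or rather from pointwise multiplication of equivalences over $Y$) then follows exactly as in the split exact sequence for $\AutG_X(X\times Y)$ that one expects to have been established en route to Theorem A.

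The main obstacle I anticipate is the careful equivariant bookkeeping in identifying the kernel: specifically, showing that $f\in\operatorname{Ker}\phi$ forces $f\in \AutG_X(X\times Y)$. In the non-equivariant setting this uses that the map $X\vee Y\hookrightarrow X\times Y$ behaves well and that the relevant obstruction to deforming the $X$-component of $f$ onto $p_X$ lives in $[Y,\aut_1(X)] = \{\ast\}$; equivariantly one must ensure the deformation can be carried out $G$-equivariantly and that the obstruction genuinely lies in $[Y,\aut_1(X)]_G$ as opposed to some larger equivariant mapping set. One should check that "$\aut_1(X)$" here means the identity component of $\map^G(X,X)$ or of $\map(X,X)$ with its residual $G$-action — the statement writes $[Y,\aut_1(X)]_G$, so $\aut_1(X)$ carries a $G$-action and we take equivariant homotopy classes into it; the vanishing hypothesis then exactly kills the obstruction. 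Granting this, the remaining verifications — that $\phi$ is a homomorphism, that $s$ splits it, that $\operatorname{Ker}\phi$ gets the claimed group structure — are routine and parallel to the references \cite{Gps, PShp}. I would organize the write-up as: (1) $\phi$ well-defined homomorphism; (2) $s=\,$"$\times$" is a splitting, hence $\phi$ surjective; (3) kernel identification via the reducibility hypothesis and $[Y,\aut_1(X)]_G = \{\ast\}$; (4) conclude split exactness, and note that taking the image of $\AutG_X$ and $\AutG_Y$ recovers the two split exact sequences for the factor subgroups as in the non-equivariant case.
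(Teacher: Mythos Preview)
Your overall architecture matches the paper's --- define $\phi(f)=(f_{XX},f_{YY})$, split it by $(\alpha,\beta)\mapsto\alpha\times\beta$, and identify the kernel --- but there is a real gap in step (1). The crux of the paper's argument is that $\phi$ being a \emph{homomorphism} already requires the hypothesis $[Y,\aut_1(X)]_G=\{\ast\}$. In general $(f\circ g)_{XX}=f_X\circ(g_X\circ\iota_X,\,g_Y\circ\iota_X)$ does \emph{not} split as $f_{XX}\circ g_{XX}$ plus a correction term $f_{XY}\circ g_{YX}$; the matrix heuristic you invoke is only valid on homotopy groups, not on the level of maps. Nor does the Theorem~A factorisation $f=f'\cdot f''$ with $f'\in\AutG_X$, $f''\in\AutG_Y$ yield multiplicativity of $\phi$, since this product is not a semidirect product and arbitrary compositions do not respect the factorisation. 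The paper closes this gap with a dedicated lemma (Lemma~\ref{Lgred}): from $[Y,\aut_1(X)]_G=\{\ast\}$ one deduces that any $G$-map $h\colon X\times Y\to X$ with $h\circ\iota_X\in\AutG(X)$ satisfies $h\simeq_G h\circ\iota_X\circ p_X$. Applying this to $h=f_X$ gives both $(f\circ g)_{XX}\simeq f_{XX}\circ g_{XX}$ directly and, via the substitution $g_X\mapsto g_{XX}\circ p_X$ inside the $Y$-component, $(f\circ g)_{YY}\simeq f_{YY}\circ g_{YY}$.

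Your kernel identification is correct, and in fact slightly different from the paper's: you argue that $f\in\Ker\phi$ forces $f\in\AutG_X(X\times Y)$ and then invoke the split exact sequence for that subgroup, whereas the paper identifies $\Ker\phi\cong[Y,\aut_1(X)]_G\times[X,\aut_1(Y)]_G$ directly by adjointness and then kills the first factor. Your route works \emph{once you have Lemma~\ref{Lgred}}: $f_{XX}=\Id_X$ gives $f_X\simeq f_X\circ\iota_X\circ p_X=p_X$, hence $f\in\AutG_X(X\times Y)$. The point to take away is that the same lemma, powered by the same hypothesis, is doing the work both in the homomorphism step and in the kernel step; locating the use of $[Y,\aut_1(X)]_G=\{\ast\}$ only in the kernel computation is the missing idea.
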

  \noindent (See Theorem \ref{t2prodexact} below.)

 We use the above results of two products to generalize for the $n$-product of spaces $\nprod = X_1\times \ldots \times X_n$, where all $X_{i} $'s are $G$-spaces. The action of $G$ on $\nprod$ is the diagonal action. Let    $\prod_{\hat{i}}:=X_{1}\times \ldots \times \hat{X_{i}}\times \ldots \times X_{n}$, i.e. $\prod_{\hat{i}}$ denotes the subproduct of $\nprod$ obtained by omitting $X_{i}$. We have a natural projection $\nprod \to \prod_{\hat{i}}$. The set of $G$-homotopy classes of $G$-self-equivalences of $\nprod$ over $\prod_{\hat{i}}$ is denoted by $\AutG_{\prod_{\hat{i}}}(\nprod)$. These are subgroups of $\Aut^G(\bX)$.

\begin{thmc}
If all $G$-self-homotopy equivalences of $\nprod$ are $G$-reducible, then  
$$\AutG(\nprod)=\AutG_{\prod_{\hat{n}}}(\nprod)\ldots \AutG_{\prod_{\hat{1}}}(\nprod).$$ 
Moreover each $\AutG_{\prod_{\hat{i}}}(\nprod)$ fits into a split exact sequence,
$$0\to \Ker(\phi_{i})\to \AutG_{\prod_{\hat{i}}}(\nprod)\xrightarrow[]{\phi_{i}}\AutG(X_{i})\to 0,~~~ 1\leq i \leq n.$$
\end{thmc}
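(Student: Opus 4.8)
The plan is to prove Theorem C by induction on $n$, bootstrapping from the two-factor results Theorem A and Theorem B. For the base case $n=2$, the product decomposition is exactly Theorem A (with $\prod_{\hat 2} = X_1$, $\prod_{\hat 1} = X_2$), and the split exact sequence is the pair of split sequences for $\AutG_X(X\times Y)$ and $\AutG_Y(X\times Y)$ that refine Theorem A; these are the equivariant analogues of Pave\v{s}i\'c's sequences, which I would establish alongside Theorem A. For the inductive step, I would group $\bX = X_1\times\cdots\times X_n$ as $\bX = \bX' \times X_n$ where $\bX' = X_1\times\cdots\times X_{n-1}$, and apply the two-factor decomposition of Theorem A to this splitting to get $\AutG(\bX) = \AutG_{\bX'}(\bX)\cdot \AutG_{X_n}(\bX)$. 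The point $\prod_{\hat n} = \bX'$, so the first factor is $\AutG_{\prod_{\hat n}}(\bX)$ as desired; the remaining work is to decompose the second factor $\AutG_{X_n}(\bX)$ — the self-equivalences fixing the projection to $X_n$ — into the product $\AutG_{\prod_{\hat{n-1}}}(\bX)\cdots\AutG_{\prod_{\hat 1}}(\bX)$.

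First I would check that an element $f \in \AutG_{X_n}(\bX)$ is determined, up to the relevant identifications, by an element of $\AutG(\bX')$ together with "twisting" data, so that $\AutG_{X_n}(\bX)$ maps onto $\AutG(\bX')$; more precisely I expect a short exact sequence $0\to [\bX', \aut_1(X_n)]_G \to \AutG_{X_n}(\bX)\to \AutG(\bX')\to 0$, split by extending an equivalence of $\bX'$ by the identity on $X_n$. This requires $G$-reducibility on $\bX$ to guarantee that every such $f$ restricts to a genuine $G$-equivalence of $\bX'$ (one must verify that the hypothesis "all $G$-self-equivalences of $\bX$ are $G$-reducible" descends to "all $G$-self-equivalences of $\bX'$ are $G$-reducible", which follows by extending a self-map of $\bX'$ to $\bX$ by the identity and reading off the block components). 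Then I would apply the inductive hypothesis to $\bX'$: its $\Aut^G$ decomposes as $\AutG_{\prod'_{\widehat{n-1}}}(\bX')\cdots\AutG_{\prod'_{\hat 1}}(\bX')$, where $\prod'_{\hat i}$ is the subproduct of $\bX'$ omitting $X_i$. Pulling this back along the splitting $\AutG(\bX')\hookrightarrow\AutG_{X_n}(\bX)$ and combining with the kernel $[\bX',\aut_1(X_n)]_G$ (which I would absorb into one of the factors, the one over $\prod_{\hat n}$ or by identifying it with part of $\AutG_{\prod_{\hat{n-1}}}(\bX)$), I obtain the full factorization. The identification of factors requires noting that a self-equivalence of $\bX'$ over $\prod'_{\hat i}$, extended by the identity on $X_n$, is precisely a self-equivalence of $\bX$ over $\prod_{\hat i} = \prod'_{\hat i}\times X_n$.

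For the split exact sequences $0\to \Ker(\phi_i)\to \AutG_{\prod_{\hat i}}(\bX)\xrightarrow{\phi_i}\AutG(X_i)\to 0$, I would argue directly for each $i$: an element of $\AutG_{\prod_{\hat i}}(\bX)$ preserves the projection to $\prod_{\hat i}$, and $\phi_i$ records the induced self-equivalence of the remaining factor $X_i$ (via $f\mapsto p_{X_i}\circ f\circ \iota_{X_i}$), which is a $G$-equivalence by $G$-reducibility. Surjectivity and the splitting come from the map $\AutG(X_i)\to \AutG_{\prod_{\hat i}}(\bX)$ sending $g$ to $g\times \Id_{\prod_{\hat i}}$ (suitably interpreted as a self-map of $\bX$), and one checks $\phi_i$ is a group homomorphism by the standard block-triangular computation. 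The kernel $\Ker(\phi_i)$ consists of equivalences over $\prod_{\hat i}$ that are the identity on the $X_i$-component; as in the non-equivariant setting it can be identified with $[\prod_{\hat i}, \aut_1(X_i)]_G$ or a subgroup thereof, though the theorem as stated only needs it as an abstract kernel.

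The main obstacle I anticipate is controlling the interaction between the two-factor splitting of $\AutG(\bX) = \AutG(\bX'\times X_n)$ and the inductive decomposition of $\AutG(\bX')$ — specifically, ensuring that the non-uniqueness in the product decomposition (the factors intersect nontrivially and the decomposition is a product of subgroups, not a semidirect product) is compatible at each stage, and that the kernels $[\bX', \aut_1(X_n)]_G$ and the lower $\Ker(\phi_i)$'s assemble correctly without overcounting. This is the equivariant shadow of the bookkeeping in Pave\v{s}i\'c's higher-product argument; the equivariant content is mild, since all the relevant maps (projections, inclusions, the identity cross-sections) are $G$-maps and $[{-},{-}]_G$ behaves functorially, so the commutator and conjugation identities needed to reorder factors go through verbatim once one is careful that every homotopy used is a $G$-homotopy.
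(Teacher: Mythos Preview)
Your argument for the split exact sequences is fine and matches the paper's. The gap is in the product decomposition, at exactly the point you flag as ``the main obstacle'': absorbing the kernel.

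First a small correction: with $\bX=\bX'\times X_n$ the split sequence for $\AutG_{X_n}(\bX)$ has kernel $[X_n,\aut_1(\bX')]_G$, not $[\bX',\aut_1(X_n)]_G$ (apply the two-factor sequence with $X=X_n$, $Y=\bX'$). More seriously, a typical kernel element has the form $k=(k_1,\ldots,k_{n-1},p_n)$ where each $k_j\colon\bX\to X_j$ restricts to $p_j$ on $\bX'$ but may depend nontrivially on the $X_n$-coordinate. Such a $k$ does not lie in any single $\AutG_{\prod_{\hat i}}(\bX)$, nor in $\AutG_{\prod_{\hat n}}(\bX)$ (which sits on the other side of your first splitting). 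So ``absorbing the kernel into one of the factors'' is not possible; what you actually need is to factor $k$ itself as a product of elements of the $\AutG_{\prod_{\hat i}}(\bX)$ for $i<n$, and that is precisely the statement you are trying to prove. The inductive reduction to $\AutG(\bX')$ therefore does not close.

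The paper avoids this by never passing to the smaller product $\bX'$. Instead it works entirely inside $\AutG(\bX)$ and proves, by an explicit factorisation, the intermediate claim
\[
\AutG_{X_1\times\cdots\times X_{k-1}}(\bX)=\AutG_{X_1\times\cdots\times X_k}(\bX)\cdot\AutG_{\prod_{\hat k}}(\bX)
\]
for each $k$: given $(p_1,\ldots,p_{k-1},f_k,\ldots,f_n)$, one peels off the $k$-th slot as $(p_1,\ldots,p_{k-1},f_k,p_{k+1},\ldots,p_n)\in\AutG_{\prod_{\hat k}}(\bX)$ and checks (using $G$-reducibility and the lemma characterising $\AutG_{\prod_{\hat i}}(\bX)$) that the remaining cofactor lies in $\AutG_{X_1\times\cdots\times X_k}(\bX)$. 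Iterating from $k=1$ to $k=n-1$ gives the decomposition. The induction is on $k$, the number of coordinates already fixed, not on $n$; no short exact sequence with a troublesome kernel appears.
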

\noindent (See Theorem \ref{tnprod} below.)

 The group $\AutG(\nprod)$ can be expressed as the product of its $n$-subgroups $\AutG_{\prod_{\hat{i}}}(\nprod)$ (where $i=1,\ldots,n$), which is difficult from computational viewpoint. Therefore we develop a new factorization called LU decomposition, which is the following: 

\begin{thmd}
If all $G$-self-homotopy equivalences of $\nprod$ are $G$-reducible, then $$\AutG(\nprod)=L^{G}(X_{1},\ldots,X_{n})\cdot U^{G}(X_{1},\ldots,X_{n}).$$
\end{thmd}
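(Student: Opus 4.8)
The plan is to mimic the classical (non-equivariant) $LU$-decomposition and reduce everything, by induction on $n$, to the two-factor product decomposition of Theorem A. First I would define the two subgroups: $L^G(X_1,\dots,X_n)$ should consist of those $G$-self-equivalences $f$ of $\nprod$ whose ``matrix'' with respect to the projections is lower-triangular, i.e.\ $p_{X_i}\circ f$ depends only on the coordinates $X_1,\dots,X_i$ (equivalently $p_{X_i}\circ f\circ\iota_{X_j}=\ast$ for $j>i$), and $U^G(X_1,\dots,X_n)$ the upper-triangular ones ($p_{X_i}\circ f\circ\iota_{X_j}=\ast$ for $j<i$). Under $G$-reducibility the diagonal entries $f_{X_iX_i}$ are honest $G$-equivalences, so elements of $L^G$ and $U^G$ are automatically $G$-homotopy equivalences, and both are genuinely subgroups of $\AutG(\nprod)$. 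One checks directly that $L^G(X_1,\dots,X_n)=\AutG_{\prod_{\hat n}}(\nprod)\cdot L^G(X_1,\dots,X_{n-1})$ (viewing $\nprod = (X_1\times\cdots\times X_{n-1})\times X_n$, the lower-triangular condition on the last row says exactly $f\in\AutG_{\prod_{\hat n}}$, while the remaining block is lower-triangular in the first $n-1$ factors), and dually for $U^G$.

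Next I would run the induction. For $n=2$ the statement $\AutG(X_1\times X_2)=L^G(X_1,X_2)\cdot U^G(X_1,X_2)$ is precisely Theorem A, since $L^G(X_1,X_2)=\AutG_{X_2}(X_1\times X_2)$ and $U^G(X_1,X_2)=\AutG_{X_1}(X_1\times X_2)$ (or the other assignment, matching whichever convention makes the row/column triangularity agree). For the inductive step, write $\bX=\bW\times X_n$ with $\bW=X_1\times\cdots\times X_{n-1}$. Apply the two-factor Theorem A to this splitting to get $\AutG(\bX)=\AutG_{\bW}(\bX)\cdot\AutG_{X_n}(\bX)$. Now $\AutG_{X_n}(\bX)$ — self-equivalences over $X_n$ — restricts on the $\bW$-block to $\AutG(\bW)$ via the split exact sequence of Theorem C (the $i$-th one, or its two-product analogue Theorem B applied to $\bW\times X_n$); by induction each such restriction factors as $L^G(X_1,\dots,X_{n-1})\cdot U^G(X_1,\dots,X_{n-1})$, and one lifts this factorization through the splitting $s\colon \AutG(\bW)\to\AutG_{X_n}(\bX)$ so that $\AutG_{X_n}(\bX)=\big(\text{kernel part}\big)\cdot s(L^G_{n-1})\cdot s(U^G_{n-1})$. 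The kernel part $\Ker(\phi)=[\,\bW,\aut_1(X_n)\,]_G$ sits inside $U^G(X_1,\dots,X_n)$ (these are the maps that are the identity on $\bW$ and only perturb the $X_n$-coordinate, i.e.\ the extreme upper-triangular elements). Combining with $\AutG_{\bW}(\bX)=L^G(X_1,\dots,X_n)\cap(\text{maps trivial above row }n)$ and the relation $L^G_n=\AutG_{\bW}(\bX)\cdot L^G_{n-1}$, $U^G_n=U^G_{n-1}\cdot\Ker(\phi)$ from the first paragraph, one rearranges the product $\AutG_{\bW}(\bX)\cdot(\Ker\phi)\cdot s(L^G_{n-1})\cdot s(U^G_{n-1})$ into the required $L^G_n\cdot U^G_n$. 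The one genuine subtlety is that $L$ and $U$ do not commute, so the rearrangements must be done carefully in the correct order; this is where I expect to spend most of the work.

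The main obstacle, as in the non-equivariant treatment of Pave\v{s}i\'c, is the bookkeeping that lets one move factors of $L$-type past factors of $U$-type (or vice versa) while staying inside the prescribed subgroups — the equality $\AutG(\nprod)=L^G\cdot U^G$ is an equality of \emph{sets}, not of groups, so there is no normality to exploit and the order of multiplication is essential. Concretely, given $f\in\AutG(\nprod)$, after applying Theorem A to the top splitting one must show that the ``over $\bW$'' factor, which lives in $\AutG_{\bW}(\bX)$, together with the inductively-obtained $L^G_{n-1}$-piece of the ``over $X_n$'' factor, can be collected on the left into a single $L^G_n$-element, and the $U^G_{n-1}$-piece together with the $[\bW,\aut_1(X_n)]_G$-kernel on the right into a single $U^G_n$-element. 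This requires checking that the relevant off-diagonal ``matrix entries'' compose the way the triangular shape predicts; equivariance adds nothing conceptually new here provided all constructions (projections, inclusions, the splittings from Theorems B and C, the section of $\phi$) are performed $G$-equivariantly, which they are by hypothesis. I would therefore present the argument as: (i) set up $L^G,U^G$ and prove the recursive identities of paragraph one; (ii) base case $=$ Theorem A; (iii) inductive step via Theorem A on $\bW\times X_n$ plus the exact sequence of Theorem C and a careful reordering.
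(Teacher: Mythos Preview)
Your proposal has a genuine gap at the very start, which then propagates through the base case and the induction.

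First, your definition of $U^{G}(X_{1},\ldots,X_{n})$ is missing the unipotent condition $f_{kk}=\Id_{X_{k}}$ that the paper imposes. Without it $L^{G}\cap U^{G}$ contains the entire diagonal subgroup $\AutG(X_{1})\times\cdots\times\AutG(X_{n})$, the factorization is no longer unique, and several of the subgroup identities you rely on later become ambiguous about which side absorbs the diagonal.

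Second, and this is the real obstruction, your base case is wrong: for $n=2$ the $LU$ statement is \emph{not} Theorem~A, under either convention. With the paper's definitions one has
\[
L^{G}(X_{1},X_{2})=\{f\in\AutG(X_{1}\times X_{2}):f_{1}=f_{1}\circ l_{1}\},
\qquad
U^{G}(X_{1},X_{2})=\{(g,p_{2}):g\circ\iota_{1}=\Id_{X_{1}}\},
\]
so $L^{G}(X_{1},X_{2})$ is the set of equivalences whose first component factors through $X_{1}$ (with $f_{2}$ completely unconstrained), while $U^{G}(X_{1},X_{2})$ is exactly the kernel of the homomorphism $\phi\colon\AutG_{X_{2}}(X_{1}\times X_{2})\to\AutG(X_{1})$ from Proposition~\ref{egred}(d). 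Neither of these equals $\AutG_{X_{1}}(X_{1}\times X_{2})$ or $\AutG_{X_{2}}(X_{1}\times X_{2})$: the first strictly contains both $\AutG_{X_{1}}$ and the diagonal, and the second is a proper subgroup of $\AutG_{X_{2}}$. Thus Theorem~A hands you the wrong two subgroups and cannot be invoked.

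The paper instead handles $n=2$ by the explicit factorization
\[
f=\bigl[f\circ(f_{1},p_{2})^{-1}\circ(f_{11}\circ p_{1},p_{2})\bigr]\circ\bigl[(f_{11}\circ p_{1},p_{2})^{-1}\circ(f_{1},p_{2})\bigr],
\]
checking directly that the left bracket lies in $L^{G}(X_{1},X_{2})$ and the right bracket in $U^{G}(X_{1},X_{2})$; the right factor has identity diagonal by construction, which is what forces it into $U^{G}$. The induction then groups $W=X_{1}\times X_{2}$ (rather than splitting off $X_{n}$ as you propose), applies the inductive hypothesis to the $(n-1)$-fold product $W\times X_{3}\times\cdots\times X_{n}$ using the chain $L^{G}(X_{1},\ldots,X_{n})\subseteq L^{G}(X_{1}\times X_{2},X_{3},\ldots,X_{n})$, and then corrects the $W$-block via the already-established $n=2$ case. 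Your recursive identities $L^{G}_{n}=\AutG_{\prod_{\hat n}}\cdot L^{G}_{n-1}$ and $U^{G}_{n}=U^{G}_{n-1}\cdot\Ker(\phi)$, and the whole inductive architecture built on them, would have to be reworked from scratch once the correct definitions and base case are in place.
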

(See Theorem \ref{tlu} below.) The factor $L^{G}(X_{1},\ldots,X_{n})$ has the property that the induced maps on the homotopy groups can be represented by lower triangular matrices and similarly by upper triangular matrices for the other factor.
%
%\end{mysubsection}
%
\begin{mysubsection}{Organisation}
The rest of the paper is organised as follows. In Section \ref{sproducttwo} we consider product of two spaces. We give some sufficient conditions of $G$-reducibility  and obtain Theorem A and Theorem B. In Section \ref{snprod}, we consider the case of $n$-products of spaces and obtain Theorem C. In the last Section \ref{slu}, we obtain the $LU$-decomposition. 

Throughout the paper, $G$ will denote a finite group and all spaces are based $G$-CW complexes. All maps are base point preserving and equivariant.

\begin{ack}
The first author would like to thank IIT Kanpur for PhD fellowship.
\end{ack}

\end{mysubsection}

\begin{section}{$\AutG(X\times Y)$}\label{sproducttwo}

We begin this section by recalling some basic facts about $G$-CW complexes. We refer to \cite{MEQ} for details. Let $X$ be a $G$-space. For a subgroup $H< G$, the $H$-fixed point set is defined as $X^H = \{x\in X: ~~ hx = x,~~ \forall h\in H\}$. If $f: X\to Y$ is a $G$-map then $f^H := f |_{X^H} : X^H \to Y^H$.  We know that an equivariant map $f: X \to Y$ between $G$-CW complexes is $G$-homotopy equivalence if and only if $f^{H}: X^H \to Y^H$ are ordinary homotopy equivalences for all subgroups $H$ of $G$.  	
%In this section we necessarily assume that all self-homotopy equivalences of $X\times Y$ are $G$-reducible.

\begin{mysubsection}{$G$-reducibility}
For $G$-CW complexes $X,Y$, an equivariant self-homotopy equivalence $f$ of $X\times Y$ is $G$-reducible if and only if $f^{H}$ is a reducible self-homotopy equivalence of $X^H\times Y^H$ for all subgroups $H$ of $G$. 
The following are some examples of when $G$-self homotopy equivalences of $X\times Y$ are $G$-reducible.
\begin{examples}	
Let $X,Y$ be two pointed $G$-CW complexes.
\begin{enumerate}[(a)]
	
\item Assume that for every positive integer $n$ and for every pair of $G$-maps $f:X\to Y, g:Y\to X$, at least one of the induced maps $\pi_{n}(f^{H}):\pi_{n}(X^{H})\to \pi_{n}(Y^{H}), \pi_{n}(g^{H}):\pi_{n}(Y^{H})\to \pi_{n}(X^{H})$ is trivial for all $H\leq G$. Then all $G$-self equivalences of $X\times Y$ are $G$-reducible.

To see this, let $h\in \AutG(X\times Y)$. Therefore $h^{H}:X^{H}\times Y^{H}\to X^{H}\times Y^{H}$ is an ordinary homotopy equivalence for all $H\leq G$. 
By the given condition at least one of the induced homomorphisms on the homotopy groups of the maps $(h^{H})_{XY}:Y^{H}\to X^{H},~~(h^{H})_{YX}:X^{H}\to Y^{H}$ is trivial.
Now from \cite[Proposition 2.1]{PShp}, we say that $(h^{H})_{XX}, (h^{H})_{YY}$  are self-homotopy equivalences of $X^{H}$ and $Y^{H}$ respectively for all $H\leq G$. Therefore $(h_{XX})^{H}$ and $(h_{YY})^{H}$ both are self-homotopy equivalences of $X^{H}$ and $Y^{H}$ respectively  for all $H\leq G$ (since $(h^{H})_{XX}=(p_{X})^{H}\circ h^{H}\circ (i_{X})^{H}=(p_{X}\circ h\circ i_{X})^{H}=(h_{XX})^{H}$. Consequently $h_{XX}\in \AutG(X), h_{YY}\in \AutG(Y)$.

\item For every positive integer $n$, if at least one of the group $\pi_{n}(X^{H}), \pi_{n}(Y^{H})$ is trivial for all $H\leq G$, then all $G$-self equivalences of $X\times Y$ are $G$-reducible. 

This follows from the above one.

\item If at least one of ~~$\Hom(\pi_{n}(X^{H}),\pi_{n}(Y^{H})), \Hom(\pi_{n}(Y^{H}), \pi_{n}(X^{H}))$ is trivial $ \forall ~   H\leq G, \forall n>0$ then all $G$-self equivalences of $X\times Y$ are $G$-reducible.

This also immediately follows from the above.

\item Let $m,n\geq 2$. Assume that the group $G = \mathbb{Z}_{2}$ acts on $S^{m}$ and $S^{n}$ by reflection about $m$ and $n$ dimensional hyperplane respectively. If $m\neq n$, then all $\mathbb{Z}_{2}$-self equivalences of $S^{m}\times S^{n}$ are $\mathbb{Z}_{2}$-reducible. To see this, observe that,

%Let $f\colon S^{m}\times S^{n}\to S^{m}\times S^{n}$ be an  $\mathbb{Z}_{2}$-homotopy equivalence. It is enough to show that $f^{H}\colon (S^{m})^{H}\times (S^{n})^{H}\to (S^{m})^{H}\times (S^{n})^{H}$ are reducible homotopy equivalences for all $H\leq \mathbb{Z}_{2}$. 
\[(S^{m})^{H} = \begin{cases}
		S^{m-1} ,~~~ if ~H=\mathbb{Z}_{2} \\
	S^{m},~~~~~~~  if~~H=\{1\}.
\end{cases} \] 
\noindent If $m<n$, then $[(S^{m})^{H},(S^{n})^{H}]$ is trivial. Therefore the induced maps between $\pi_{k}(S^{m})^{H}$ and $\pi_{k}(S^{n})^{H}$ are trivial, for all $H\leq \mathbb{Z}_{2}$ and for all $k>0$. Similarly for $m>n$, we can say that the induced maps between $\pi_{k}(S^{n})^{H}$ and $\pi_{k}(S^{m})^{H}$ are trivial, for all $H\leq \mathbb{Z}_{2}$ and for all $k>0$.
Hence all $\mathbb{Z}_{2}$-self-homotopy equivalences of $S^{m}\times S^{n}$ are $\mathbb{Z}_{2}$-reducible.
\end{enumerate}			
\end{examples}

\noindent Recall that, an endomorphism $\phi\colon \Gamma\to \Gamma$ of an additively written group $\Gamma$ is called \emph{quasi-regular} if the map $(\Id-\phi)\colon \Gamma\to \Gamma$ defined by $\gamma\mapsto \gamma- \phi(\gamma)$, is an automorphism. A self $G$-map $f\colon X\to X$ is called \emph{quasi-regular} if the endomorphism of groups $(f^{H})_{\#n}\colon \pi_{n}(X^{H})\to \pi_{n}(X^{H})$ are quasi-regular for all $H\leq G$ and all $n$. An endomorphism $\phi\colon \Gamma\to \Gamma$ is called \emph{nilpotent} if $\phi^{k}=0$ for some $k$. For a nilpotent endomorphism $\phi\colon \Gamma\to \Gamma$ such that $(\Id-\phi)$ is a homomorphism, then $\phi$ is quasi-regular (cf. \cite{PRshe}).

The following Proposition gives some sufficient conditions for $G$-reducibility. We denote $f_{XY} = p_X\circ f\circ \iota_Y: Y \to X$ and $f_{YX} = p_Y\circ f\circ \iota_X: X \to Y.$
\begin{proposition}\label{pgred}		
Let  $X, Y$ be based $G$-CW complexes.
\begin{enumerate}[(a)]
\item Assume that  $f_{YY}\in \AutG(Y)$ for all $f\in \AutG(X\times Y)$. Then all $G$- self-homotopy equivalences of $X\times Y$ are $G$-reducible.
\item Assume that for any $f\in \AutG(X\times Y)$ one of the maps $f_{XY}\circ \widetilde{f}_{YX}: X \to X$ and $f_{YX}\circ \widetilde{f}_{XY}: Y\to Y$ is quasi-regular. Then all $G$-self-homotopy equivalences of $X\times Y$ are $G$-reducible. Here $\widetilde{f}$ is the $G$-hmotopy inverse of $f$ in $\AutG(X\times Y)$.
\item Suppose that every self $G$-map of $X$, which can be factored through $Y$ (up to homotopy), induces a nilpotent endomorphism on $\pi_n(X^H)$ for all $n,~ H<G$ (i.e. if $X$ and $Y$ are homotopically distant), then $G$-self-homotopy equivalences of $X\times Y$ are $G$-reducible.
%\item For a $G$-space $X$ with solvable fundamental group for which $X$ and $Y$ are homologically distant, then all $G$-self-homotopy equivalences of $X\times Y$ are $G$-reducible.
\end{enumerate}
\end{proposition}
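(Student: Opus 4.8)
The plan is to reduce all three statements to the corresponding non-equivariant reducibility criteria of Pave\v{s}i\'{c} (\cite{PShp, PRshe}), applied one fixed-point set at a time. I would first record the ingredients that make this work: $f\in\AutG(X\times Y)$ is $G$-reducible if and only if $f^{H}$ is a reducible self-homotopy equivalence of $X^{H}\times Y^{H}$ for every $H\leq G$, and taking $H$-fixed points is functorial and commutes with the inclusions $\iota_{X},\iota_{Y}$ and projections $p_{X},p_{Y}$, so that $(f_{XX})^{H}=(f^{H})_{XX}$, $(f_{XY})^{H}=(f^{H})_{XY}$, and so on. In particular, if $\widetilde{f}$ is the $G$-homotopy inverse of $f$ then $(\widetilde{f})^{H}$ is a homotopy inverse of $f^{H}$, and a composite such as $f_{XY}\circ\widetilde{f}_{YX}$ restricts on $X^{H}$ to $(f_{XY})^{H}\circ(\widetilde{f}_{YX})^{H}$, which is the corresponding composite built from the self-homotopy equivalence $f^{H}$ of $X^{H}\times Y^{H}$. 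Since ``quasi-regular'' and ``nilpotent'' for a $G$-map are, by definition, conditions imposed on all the groups $\pi_{n}(X^{H})$ simultaneously, these properties pass without change to every fixed-point restriction.

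The substantive assertion is (b), and I would handle it first. Given $f\in\AutG(X\times Y)$, by hypothesis one of $f_{XY}\circ\widetilde{f}_{YX}\colon X\to X$ and $f_{YX}\circ\widetilde{f}_{XY}\colon Y\to Y$ is quasi-regular, say the former (the other case is symmetric in the roles of $X$ and $Y$). Then for each $H\leq G$ the self-map $(f_{XY})^{H}\circ(\widetilde{f}_{YX})^{H}$ of $X^{H}$ induces a quasi-regular endomorphism on every $\pi_{n}(X^{H})$, so $f^{H}$ satisfies the quasi-regularity hypothesis of the non-equivariant reducibility theorem of \cite{PRshe}; that theorem then yields that $(f^{H})_{XX}$ and $(f^{H})_{YY}$ are ordinary self-homotopy equivalences of $X^{H}$ and $Y^{H}$. (Concretely, once one writes the automorphism of $\pi_{n}(X^{H}\times Y^{H})=\pi_{n}(X^{H})\oplus\pi_{n}(Y^{H})$ induced by $f^{H}$ as the block matrix of the four induced homomorphisms, quasi-regularity of the composite is exactly what makes the Schur-complement type computation go through, after which Whitehead's theorem upgrades the conclusion to honest homotopy equivalences.) Since $(f^{H})_{XX}=(f_{XX})^{H}$ and $(f^{H})_{YY}=(f_{YY})^{H}$ for all $H\leq G$, this gives $f_{XX}\in\AutG(X)$ and $f_{YY}\in\AutG(Y)$; as $f$ was arbitrary, all $G$-self-homotopy equivalences of $X\times Y$ are $G$-reducible.

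Parts (a) and (c) would then follow formally from (b). For (c): both $f_{XY}\circ\widetilde{f}_{YX}$ and $\widetilde{f}_{XY}\circ f_{YX}$ are self $G$-maps of $X$ that factor through $Y$, so by hypothesis they induce nilpotent endomorphisms on every $\pi_{n}(X^{H})$; since a nilpotent endomorphism $\phi$ with $\Id-\phi$ a homomorphism is quasi-regular (automatic for $n\geq 2$, and covered in degree $1$ by the remark recalled above), the hypothesis of (b) is met for every $f$, and (c) follows. For (a): apply the hypothesis to both $f$ and $\widetilde{f}$ to see that $(f_{YY})^{H}$ and $(\widetilde{f}_{YY})^{H}$ are homotopy equivalences for every $H$. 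Reading off the $\pi_{n}(Y^{H})$-summand of the identity induced by $f^{H}\circ\widetilde{f}^{H}\simeq\Id$ on $\pi_{n}(X^{H}\times Y^{H})=\pi_{n}(X^{H})\oplus\pi_{n}(Y^{H})$ gives $\Id-(f_{YX}\circ\widetilde{f}_{XY})^{H}_{*}=(f_{YY})^{H}_{*}\circ(\widetilde{f}_{YY})^{H}_{*}$, an isomorphism for all $n$ and $H$; hence $f_{YX}\circ\widetilde{f}_{XY}$ is quasi-regular, so again (b) applies. (Equivalently, one may quote directly the non-equivariant statement that $f$ is reducible as soon as both $f_{YY}$ and $\widetilde{f}_{YY}$ are homotopy equivalences.)

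The genuine content sits entirely in the non-equivariant criteria of \cite{PShp, PRshe}, so the equivariant passage is essentially bookkeeping, and that bookkeeping is the main thing to get right. Two points deserve care. First, the hypotheses of (a)--(c) are quantified over \emph{all} $f\in\AutG(X\times Y)$, which is precisely what lets us apply them also to the homotopy inverse $\widetilde{f}$; this is essential for reducing (a) to (b), since the bare implication ``$f_{YY}$ a homotopy equivalence $\Rightarrow$ $f$ reducible'' is false. Second, one must verify that passing to $H$-fixed points commutes with composition, with the inclusions and projections, and with taking homotopy inverses, so that the words ``quasi-regular'' and ``nilpotent'' descend to each $X^{H}$; the only mild subtlety there is the degree-one instance of ``nilpotent $\Rightarrow$ quasi-regular'', which is exactly why the hypothesis of (c) is phrased for the induced endomorphisms of the homotopy groups rather than for the maps themselves.
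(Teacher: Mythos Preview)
Your proposal is correct, and the overall strategy---pass to fixed-point sets $X^{H}\times Y^{H}$ and invoke the non-equivariant reducibility criteria of Pave\v{s}i\'{c}---is the same as the paper's. The logical organization, however, is inverted. The paper proves (a) first and independently: using the hypothesis on both $f$ and its inverse $\bar f$, it shows $(p_X,\bar f_Y)\in\AutG(X\times Y)$ via \cite[Proposition~2.3(b)]{PShp} fixed-point-wise, whence $(f_X,p_Y)=(p_X,\bar f_Y)\circ f\in\AutG(X\times Y)$ and the same lemma yields $f_{XX}\in\AutG(X)$. The paper then deduces (b) from (a) by the block-matrix computation you sketch (showing $f_{YY}\in\AutG(Y)$ and appealing to (a)), and (c) from (b) exactly as you do. You instead establish (b) first by quoting the non-equivariant quasi-regularity criterion of \cite{PRshe} as a black box on each $X^{H}\times Y^{H}$, and then derive (a) from (b) via the matrix identity $\Id-(f_{YX}\circ\widetilde f_{XY})^{H}_{*}=(f_{YY})^{H}_{*}\circ(\widetilde f_{YY})^{H}_{*}$. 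Both routes are valid; the paper's direct argument for (a) is a bit more elementary in that it never passes through the quasi-regularity machinery, relying only on the ``$(p_X,g)$ is an equivalence iff $g\circ\iota_Y$ is'' lemma, while your route has the advantage of making (b) the single substantive step from which everything else follows formally.
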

\begin{proof}
\begin{enumerate}[(a)]
	
\item Let $f\in \AutG(X\times Y)$. Then by assumption $f_{YY}\in \AutG(Y)$. We have to show that $f_{XX}\in \AutG(X)$ which is equivalent to show that $(f_{XX})^{H}\in \Aut(X^{H}),~~ \forall H\leq G$. By definition $(f_{X})^{H}\circ (\iota_{X})^{H}=(f_{XX})^{H}$. Therefore by \cite[Proposition 2.3(b)]{PShp}, it is enough to show $((f_{X})^{H},(p_{Y})^{H})=(f_{X},p_{Y})^{H}\in \Aut(X^{H}\times Y^{H})$ for all $H\leq G$.

Since $\AutG(X\times Y)$ is a group, $f = ({f}_{X},{f}_{Y})$ has an inverse $\bar{f} = (\bar{f}_{X},\bar{f}_{Y}) \in \AutG(X\times Y)$. So $\bar{f}\circ f=\Id_{X\times Y}$.
Now observe that, 
		
$$(p_X, p_Y) = \Id_{X\times Y} = \bar{f}\circ f= (\bar{f}_{X},\bar{f}_{Y})\circ (f_{X},f_{Y})= (\bar{f}_{X}\circ (f_{X},f_{Y}),\bar{f}_{Y}\circ (f_{X},f_{Y})).$$
So we get, $\bar{f}_{X}\circ (f_{X},f_{Y})=p_{X}; ~~ \bar{f}_{Y}\circ (f_{X},f_{Y})=p_{Y}.$  By the given condition, we can say that $\bar{f}_{YY}\in \AutG(Y)$, as $\bar{f}\in \AutG(X\times Y)$. Hence $(\bar{f}_{YY})^{H}\in \Aut(Y^{H}), ~\forall H\leq G$, i.e. $(\bar{f}_{Y})^{H}\circ (\iota_{Y})^{H}\in \Aut(Y^{H})$ for all $H\leq G$. Therefore from \cite[Proposition 2.3(b)]{PShp}, we have $(p_{X},\bar{f}_{Y})^{H}=((p_{X})^{H},(\bar{f}_{Y})^{H})\in \Aut(X^{H}\times Y^{H}),~~ \forall H\leq G$. Hence $(p_{X},\bar{f}_{Y})\in \AutG(X\times Y)$.
Now, $$(f_{X},p_{Y})=(f_{X},\bar{f}_{Y}\circ (f_{X},f_{Y}))=(p_{X},\bar{f}_{Y})\circ (f_{X},f_{Y})\in \AutG(X\times Y).$$ Hence $(f_{X},p_{Y})^{H}\in \Aut(X^{H}\times Y^{H})$ for all $H\leq G$.

\item For any map $f\colon X\times Y \to X\times Y$ the induced endomorphism on homotopy groups $(f^H)_{\#n}\colon {\pi}_{n}(X^H)\times {\pi}_{n}(Y^H)\to {\pi}_{n}(X^H)\times {\pi}_{n}(Y^H)$ can be represented by the matrix form for all $H\leq G$,
$$ M_{n}(f^{H}):=
\begin{bmatrix}
	(f^{H}_{XX})_{\#n}& (f^{H}_{XY})_{\#n}\\
	(f^{H}_{YX})_{\#n}& (f^{H}_{YY})_{\#n}
\end{bmatrix}.$$
Without any ambiguity we can write $(f^{H})_{XX} = (f_{XX})^{H} = f^{H}_{XX}$ and so on.

\noindent Therefore for any $f\in \AutG(X\times Y)$ with its inverse $\widetilde{f}$, we can say that $$M_{n}(f^{H})\cdot M_{n}(\widetilde{f^{H}}) = M_{n}(\Id_{X^{H}\times Y^{H}}).$$
This implies that $(f^{H}_{YX})_{\#n}\circ (\widetilde{f}^{H}_{XY})_{\#n} + (f^{H}_{YY})_{\#n}\circ (\widetilde{f}^{H}_{YY})_{\#n} = \Id_{\pi_{n}(Y^{H})}$. So we get, $$(f^{H}_{YY})_{\#n}\circ (\widetilde{f}^{H}_{YY})_{\#n} = \Id_{\pi_{n}(Y^{H})} - (f^{H}_{YX})_{\#n}\circ (\widetilde{f}^{H}_{XY})_{\#n} =  \Id_{\pi_{n}(Y^{H})} - (f_{YX}\circ \widetilde{f}_{XY})^{H}_{\#n}.$$
If $f_{YX}\circ \widetilde{f}_{XY}$ is quasi-regular, then the maps $\Id_{\pi_{n}(Y^{H})} - (f_{YX}\circ f_{XY})^{H}_{\#n}$ are automorphsims for all $n$ and $H\leq G$. Consequently, $(f^{H}_{YY})_{\#n}\colon \pi_{n}(Y^{H})\to \pi_{n}(Y^{H})$ are isomorphsim for all $n$ and all $H\leq G$. By Whitehead theorem we get $f_{YY}\in \AutG(Y)$. Using the part (a), we get that the $G$-self-homotopy equivalences of $X\times Y$ are $G$-reducible.

Similarly, if we take $f_{XY}\circ \widetilde{f}_{YX}$ is quasi-regular, then using the equality $$(f^{H}_{XY})_{\#n}\circ (\widetilde{f}^{H}_{YX})_{\#n} + (f^{H}_{XX})_{\#n}\circ (\widetilde{f}^{H}_{XX})_{\#n} = \Id_{\pi_{n}(X^{H})},$$ we get the desire result.

\item Let $f\colon X\times Y \to X\times Y$ be a self $G$-homotopy equivalence with $G$-homotopy inverse $\widetilde{f}\colon X\times Y\to X\times Y$. From part(b) its enough to show that $f_{XY}\circ \widetilde{f}_{YX}$ is quasi-regular, which is equivalent to show that $\Id_{\pi_{n}(X^{H})} - (f_{XY}\circ \widetilde{f}_{YX})^{H}_{\#n}$ are homomorphsims, whenever $(f_{XY}\circ \widetilde{f}_{YX})^{H}_{\#n}$ are nilpotent endomorphism for all $n$ and all $H\leq G$. Observe that, $\Id_{\pi_{n}(X^{H})} - (f_{XY}\circ \widetilde{f}_{YX})^{H}_{\#n} = (f_{XX})^{H}_{\#n}\circ (\widetilde{f}_{XX})^{H}_{\#n}$ are homorphisms for all $n$ and all $H\leq G$. By the given assumption, $(f_{XY}\circ \widetilde{f}_{YX})_{\#n}$ are nilpotent natural transformation for all $n$. Therefore $(f_{XY}\circ \widetilde{f}_{YX})^{H}_{\#n}$ are nilpotent endomorphism for all $n$ and all $H\leq G$, so the natural transformation $(f_{XY}\circ \widetilde{f}_{YX})_{\#n}$ are quasi-regular endomorphisms for all $n$. Hence the map $f_{XY}\circ \widetilde{f}_{YX}$ is quasi-regular and we get the $G$-reducibility of the map $f$.

\end{enumerate}
\end{proof}

\end{mysubsection}

Recall that we have natural projections $p_X: X\times Y\to X$ and $p_Y: X\times Y \to Y$. A self-map of $X\times Y$ such that $p_X\circ f = p_X$ looks like $(p_X, f_Y)$. We define $$\Aut^G_X (X\times Y) = \{ f\in \Aut^G(X\times Y)~|~~ p_X\circ f = p_X \},$$ $$\Aut^G_Y (X\times Y) = \{ f\in \Aut^G(X\times Y)~|~~ p_Y\circ f = p_Y \}.$$
\noindent We now show that these are subgroups $\AutG(X\times Y)$ and obtain a short exact sequence. Recall that $\aut_1(X)$ is the connected component of identity of $\aut(X)$. This has a $G$-action defined by $(g.f) (x) = gf(g^{-1}x).$

\begin{proposition}\label{egred}
\begin{enumerate}[(a)]
\item If $(p_{X},g):X\times Y \to X\times Y$ is a $G$-self-homotopy equivalence, then there exists a $G$-map $\widetilde{g}:X\times Y\to Y$, such that $(p_{X},\widetilde{g})$  is a $G$-equivariant homotopy inverse of $(p_{X},g)$.

\item The map $(p_{X},g):X\times Y \to X\times Y$ is a $G$-homotopy equivalence if and only if $g\circ \iota_{Y}\in \AutG(Y)$.

\item $\AutG_{X}(X\times Y)$ is a subgroup of $\AutG(X\times Y)$ whose elements are of the form $(p_{X},g)$ with $g\circ \iota_{Y}\in \AutG(Y)$.

\item If $X$ is a connected $G$-CW complex, then there exists a split exact sequence $$0\to [X,\aut_{1}(Y)]_{G}\to \AutG_{X}(X\times Y)\xrightarrow{\phi} \AutG(Y)\to 0, $$ 
\noindent where $\phi (p_{X},g)=g\circ \iota_{Y}$, for all $(p_{X},g)\in \AutG_{X}(X\times Y).$
\end{enumerate}
\end{proposition}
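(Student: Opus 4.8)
The plan is to establish parts (a)--(d) in order, since each relies on the previous ones. For part (a), I would start with a $G$-homotopy inverse $F$ of $(p_X,g)$ in $\Aut^G(X\times Y)$ and write $F = (F_X, F_Y)$. From $(p_X, g)\circ F \simeq_G \Id$ one reads off $F_X \simeq_G p_X$ (after composing with $p_X$), so $F$ is $G$-homotopic to a map of the form $(p_X, \widetilde g)$; then I must check that $(p_X,\widetilde g)$ is still a genuine two-sided $G$-homotopy inverse, which follows because $G$-homotopic maps have the same effect in $\Aut^G(X\times Y)$. For part (b), the ``only if'' direction: applying the fixed-point functor, $(p_X,g)^H = ((p_X)^H, g^H)$ is an ordinary homotopy equivalence of $X^H\times Y^H$ for all $H\le G$, and by \cite[Proposition 2.3(b)]{PShp} (already invoked above) this forces $(g\circ\iota_Y)^H = g^H\circ(\iota_Y)^H \in \Aut(Y^H)$ for all $H$, hence $g\circ\iota_Y\in\Aut^G(Y)$. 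For the ``if'' direction, run the same citation in reverse: $g^H\circ(\iota_Y)^H\in\Aut(Y^H)$ for all $H$ gives $(p_X,g)^H\in\Aut(X^H\times Y^H)$ for all $H$, hence $(p_X,g)\in\Aut^G(X\times Y)$ by the Whitehead-type criterion for $G$-CW complexes recalled at the start of the section.

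For part (c), I would verify the subgroup axioms directly. Closure under composition: $(p_X,g)\circ(p_X,g') = (p_X, g\circ(p_X,g'))$, which has the correct first-coordinate form, and its $\iota_Y$-restriction is $g\circ(p_X,g')\circ\iota_Y = g\circ(\ast, g'\circ\iota_Y)$; composing with the identification this is $(g\circ\iota_Y)\circ(g'\circ\iota_Y)$ up to the base-point conventions, hence lies in $\Aut^G(Y)$, so part (b) applies. The identity $(p_X,p_Y)$ clearly lies in the set, and closure under inverses is exactly part (a). This simultaneously shows that the assignment $(p_X,g)\mapsto g\circ\iota_Y$ is a group homomorphism $\phi\colon \Aut^G_X(X\times Y)\to\Aut^G(Y)$, which sets up part (d).

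For part (d), surjectivity of $\phi$: given $h\in\Aut^G(Y)$, the map $(p_X, h\circ p_Y)$ lies in $\Aut^G_X(X\times Y)$ by part (b) (its $\iota_Y$-restriction is $h$), and it is a set-theoretic section of $\phi$; I would check it is in fact a group homomorphism, giving the splitting. The kernel consists of classes $(p_X,g)$ with $g\circ\iota_Y\simeq_G \Id_Y$; such a $g\colon X\times Y\to Y$ restricts on $Y$ to something $G$-homotopic to the identity, so by the standard adjunction/exponential correspondence for based $G$-maps — here using that $X$ is a connected based $G$-CW complex so that the relevant component of $\map^G(X\times Y, Y)$ over $\Id_Y$ is detected by $\map^G(X, \map(Y,Y))$ landing in $\aut_1(Y)$ — one identifies $\Ker(\phi)$ with $[X,\aut_1(Y)]_G$. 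The main obstacle I anticipate is this last identification: one must set up the correspondence $G$-equivariantly (the $G$-action on $\aut_1(Y)$ is the conjugation action $(g\cdot f)(y) = g f(g^{-1}y)$ introduced just above the proposition), verify that the subgroup structure on $\Ker(\phi)$ matches the natural abelian group structure on $[X,\aut_1(Y)]_G$ coming from pointwise composition in $\aut_1(Y)$, and use connectedness of $X$ to ensure that maps $X\times Y\to Y$ restricting to $\Id$ on $Y$ correspond precisely to maps $X\to\aut_1(Y)$ (the identity component) rather than all of $\aut(Y)$. The non-equivariant version of this is classical (Heath--Booth, and \cite{PShp}), and the equivariant upgrade should go through by applying those arguments fixed-point-set-wise together with the $G$-CW Whitehead theorem, but the bookkeeping with base points and the $G$-action is where care is needed.
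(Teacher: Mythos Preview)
Your proposal is correct and follows essentially the same route as the paper: part (a) by writing the inverse as $(F_X,F_Y)$ and reading off $F_X\simeq_G p_X$ from the composition, part (b) by passing to fixed points and invoking \cite[Proposition 2.3]{PShp}, part (c) as an immediate consequence of (a) and (b), and part (d) via the section $\psi(h)=(p_X,h\circ p_Y)$ together with the exponential adjunction to identify the kernel with $[X,\aut_1(Y)]_G$. The paper is in fact slightly less detailed than you are about checking that $\psi$ is a homomorphism and that the group structures on $\Ker(\phi)$ and $[X,\aut_1(Y)]_G$ match, so your extra care there is appropriate rather than a deviation.
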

	
\begin{proof}
\begin{enumerate}[(a)]
\item Let $(p_{X},g)\in \AutG(X\times Y)$. Then it has an inverse in $\AutG(X\times Y)$, say $(f_{X},f_{Y})$.
So,
\begin{align*}
&(p_{X},g)\circ (f_{X},f_{Y})=\Id_{X\times Y}=(p_{X},p_{Y})\\
\Rightarrow &~(f_{X},g\circ(f_{X},f_{Y}))=(p_{X},p_{Y})\\
\Rightarrow &~f_{X}=p_{X}. 
\end{align*}
Therefore $(p_{X},f_{Y})$  is a $G$-equivariant homotopy inverse of $(p_{X},g)$. Consequently  $(p_{X},\widetilde{g})$  is a $G$-equivariant homotopy inverse of $(p_{X},g)$, where $\widetilde{g}=f_{Y}:X\times Y\to Y$.

\item  We know that $(p_{X},g):X\times Y \to X\times Y$ is a $G$-equivariant self-homotopy equivalence, if and only if $(p_{X},g)^{H}:X^{H}\times Y^{H}\to X^{H}\times Y^{H}$ are self-homotopy equivalences for all $H\leq G$, i.e $((p_{X})^{H},g^{H})\in \Aut(X^{H}\times Y^{H})$ for all $H\leq G$.
From \cite[Proposition 2.3]{PShp}, we say that  $((p_{X})^{H},g^{H})\in \Aut(X^{H}\times Y^{H})$  if and only if $g^{H}\circ (\iota_{Y})^{H}\in \Aut(Y^{H})$. Therefore  $(p_{X},g):X\times Y \to X\times Y$ is a $G$-equivariant homotopy equivalence, if and only if $(g\circ \iota_{Y})^{H}=g^{H}\circ (\iota_{Y})^{H}\in \Aut(Y^{H})$ for all $H\leq G$  i.e. $g\circ \iota_{Y}\in \AutG(Y)$.

\item  Follows from part (a) and (b).

\item  First we prove that $\phi$ is a homomorphism. Let $(p_{X},g),(p_{X},f)\in \AutG_{X}(X\times Y)$, then
 $$\phi ((p_{X},g)\circ (p_{X},f)) = \phi(p_{X},g\circ (p_{X},f)) = g\circ (p_{X},f)\circ \iota_{Y}$$ $$= g\circ \iota_{Y}\circ f\circ \iota_{Y} = \phi(p_{X},g)\circ \phi(p_{X},f).$$
Let us define an another map $$\psi:\AutG(Y)\to \AutG_{X}(X\times Y),  ~~\psi(h) = (p_{X},h\circ p_{Y}),$$ for all $h\in \AutG(Y)$. Now $(\phi \circ \psi) (h)=\phi(p_{X},h\circ p_{Y})=h\circ p_{Y}\circ \iota_{Y}=h$  for all $h\in \AutG(Y)$, i.e. $\phi\circ \psi = \Id$. Therefore $\phi$ is an onto map.

Now we determine the kernel of $\phi.$ Clearly, $(p_{X},f)\in \ker(\phi)$  if and only if $\phi(p_{X},f)=\Id_{Y}$, i.e $f\circ \iota_{Y}=\Id_{Y}$, where $f:X\times Y\to Y$ is a $G$-map. Therefore $(p_{X},f)\in \Ker(\phi)$ corresponds under the adjoint associativity to a base point preserving map $\widetilde{f}:X\to \map(Y,Y)$ defined as $\widetilde{f}(x)(y)=f(x,y)$. Observe that $\widetilde{f}$ sends the base point of $X$ to $\Id_{Y}$. Since $X$ is a connected, the image of $\widetilde{f}$ is contained in the component of $\map(Y,Y)$ that contains $\Id_{Y}$, denoted by $\aut_{1}(Y)$. Again $\widetilde{f}$ is a $G$-equivariant map, because $$(g\cdot \widetilde{f}(x))(y)=g\widetilde{f}(x)(g^{-1}y)=gf(x,g^{-1}y)=f(gx,y)=\widetilde{f}(gx)(y),$$ i.e. $\widetilde{f}(gx)=g\cdot \widetilde{f}(x)$. Therefore $\Ker(\phi)$ can be identified with $[X,\aut_{1}(Y)]_{G}$, which proves the assertion.
			
\end{enumerate}
\end{proof}

The following corollary follows easily from Proposition \ref{egred}(b).
\begin{corollary}\label{lgred}
Assume that all $G$-self-homotopy equivalences of $X\times Y$ are $G$-reducible. If $f=(f_{X},f_{Y})\in \AutG(X\times Y)$, then  $(p_{X},f_{Y})\AutG_{X}(X\times Y)$, and $(f_{X},p_{Y})\in \AutG_{Y}(X\times Y)$.
\end{corollary}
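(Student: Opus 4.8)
The plan is to read everything off Proposition \ref{egred}(b) together with the definition of $G$-reducibility, after unwinding what the components of $f$ are. Recall that writing $f=(f_X,f_Y)$ means precisely $f_X=p_X\circ f$ and $f_Y=p_Y\circ f$, so that $f_{XX}=p_X\circ f\circ\iota_X=f_X\circ\iota_X$ and $f_{YY}=p_Y\circ f\circ\iota_Y=f_Y\circ\iota_Y$. Since $f\in\AutG(X\times Y)$ and all $G$-self-homotopy equivalences of $X\times Y$ are assumed $G$-reducible, we obtain $f_{XX}\in\AutG(X)$ and $f_{YY}\in\AutG(Y)$.

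For the first assertion I would take $g=f_Y:X\times Y\to Y$ in Proposition \ref{egred}(b). By the computation above, $g\circ\iota_Y=f_Y\circ\iota_Y=f_{YY}\in\AutG(Y)$, so Proposition \ref{egred}(b) gives that $(p_X,f_Y)$ is a $G$-homotopy equivalence of $X\times Y$. Since $p_X\circ(p_X,f_Y)=p_X$, this element satisfies the defining condition of $\AutG_X(X\times Y)$, hence $(p_X,f_Y)\in\AutG_X(X\times Y)$.

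The second assertion follows by the symmetric statement obtained from Proposition \ref{egred}(b) after interchanging the roles of $X$ and $Y$: the map $(h,p_Y):X\times Y\to X\times Y$ is a $G$-homotopy equivalence if and only if $h\circ\iota_X\in\AutG(X)$. Applying this to $h=f_X$ and using $f_X\circ\iota_X=f_{XX}\in\AutG(X)$, we conclude that $(f_X,p_Y)$ is a $G$-homotopy equivalence; as $p_Y\circ(f_X,p_Y)=p_Y$, it lies in $\AutG_Y(X\times Y)$. There is no real obstacle here — the only point worth a sentence is the passage from the $X$-over version of Proposition \ref{egred}(b) to its $Y$-over analogue, which is immediate by relabeling the two factors.
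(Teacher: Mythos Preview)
Your argument is correct and is exactly the approach the paper intends: the paper records this corollary as an immediate consequence of Proposition~\ref{egred}(b), and you have simply unwound that implication, using $G$-reducibility to supply $f_{YY}\in\AutG(Y)$ and $f_{XX}\in\AutG(X)$ and then invoking the (symmetric form of) Proposition~\ref{egred}(b).
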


 The following is a sufficient condition for an equivariant self-map $f$ of $X\times Y$ to be a $G$-homotopy equivalence. 

\begin{proposition}\label{mgred}
	Let us assume all $G$-self-homotopy equivalences of $X\times Y$ are G-reducible. For any $f\in \map^{G}(X\times Y, X\times Y)$,  we have $f\in \AutG(X\times Y)$ if and only if $f_{XX}\in \AutG(X)$ and  $f_{YY}\in \AutG(Y)$.
\end{proposition}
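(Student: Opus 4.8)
\emph{Plan.} The forward implication requires nothing new: if $f\in\AutG(X\times Y)$, then by the standing $G$-reducibility assumption $f_{XX}\in\aut^G(X)$ and $f_{YY}\in\aut^G(Y)$, so a fortiori $f_{XX}\in\AutG(X)$ and $f_{YY}\in\AutG(Y)$. Hence the entire content is the converse, and I will assume $f=(f_X,f_Y)$ with $f_{XX}\in\AutG(X)$ and $f_{YY}\in\AutG(Y)$, aiming to show $f\in\AutG(X\times Y)$.

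The first step is to extract two honest $G$-self-equivalences from the corners of $f$. Put $a:=(p_X,f_Y)$ and $c:=(f_X,p_Y)$. Since $f_Y\circ\iota_Y=f_{YY}\in\AutG(Y)$, Proposition~\ref{egred}(b) gives $a\in\AutG_X(X\times Y)$; symmetrically — Proposition~\ref{egred} holds with the roles of $X$ and $Y$ interchanged, or one may quote \cite[Proposition~2.3]{PShp} fixed-pointwise — the equality $f_X\circ\iota_X=f_{XX}\in\AutG(X)$ gives $c\in\AutG_Y(X\times Y)$. In particular $a$ and $c$ are $G$-homotopy equivalences, hence so is $c\circ a^{-1}$.

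The key move is to feed the $G$-reducibility hypothesis through $c\circ a^{-1}$ rather than through $f$ itself (the latter would be circular). As $c\circ a^{-1}\in\AutG(X\times Y)$, it is $G$-reducible, so $(c\circ a^{-1})_{XX}\in\AutG(X)$. Now fix $H\leq G$ and $n$, and write $M_n(f^H)=\left[\begin{smallmatrix}A&B\\ C&D\end{smallmatrix}\right]$ in the notation of Proposition~\ref{pgred}(b), so that $A=(f_{XX}^H)_{\#n}$ and $D=(f_{YY}^H)_{\#n}$ are isomorphisms. A direct computation gives $M_n(a^H)=\left[\begin{smallmatrix}\Id&0\\ C&D\end{smallmatrix}\right]$ and $M_n(c^H)=\left[\begin{smallmatrix}A&B\\ 0&\Id\end{smallmatrix}\right]$, so that $M_n((c\circ a^{-1})^H)=M_n(c^H)\,M_n(a^H)^{-1}$ has $(1,1)$-entry $A-BD^{-1}C$. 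Since this entry is exactly $((c\circ a^{-1})_{XX}^H)_{\#n}$ and $(c\circ a^{-1})_{XX}\in\AutG(X)$, the map $A-BD^{-1}C$ is an isomorphism for every $n$ and every $H\leq G$.

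It remains to combine the two facts $D$ invertible and $A-BD^{-1}C$ invertible. These give the Gauss/Schur factorization $\left[\begin{smallmatrix}A&B\\ C&D\end{smallmatrix}\right]=\left[\begin{smallmatrix}\Id&BD^{-1}\\ 0&\Id\end{smallmatrix}\right]\left[\begin{smallmatrix}A-BD^{-1}C&0\\ 0&D\end{smallmatrix}\right]\left[\begin{smallmatrix}\Id&0\\ D^{-1}C&\Id\end{smallmatrix}\right]$, in which all three factors are invertible (the outer two are unipotent, the middle one block-diagonal with invertible blocks $A-BD^{-1}C$ and $D$). Hence $(f^H)_{\#n}=M_n(f^H)$ is an isomorphism for all $n$ and all $H\leq G$, so by Whitehead's theorem each $f^H\colon X^H\times Y^H\to X^H\times Y^H$ is a homotopy equivalence; since this holds for every $H$, $f\in\AutG(X\times Y)$. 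I expect the only genuine difficulty to be the conceptual one of locating the right auxiliary equivalence $c\circ a^{-1}$ through which to apply $G$-reducibility; the lone technical caveat — the meaning of the matrix calculus on $\pi_1$ when it is non-abelian — is dealt with exactly as in Proposition~\ref{pgred}(b).
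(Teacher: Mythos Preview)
Your argument is correct but takes a genuinely different route from the paper's. The paper never descends to homotopy groups: having $(f_X,p_Y)\in\AutG_Y(X\times Y)$ with $G$-homotopy inverse $(\widetilde{f_X},p_Y)$ from Proposition~\ref{egred}, it writes the map-level factorization
\[
f = \bigl[p_X,\; f_Y\circ(\widetilde{f_X},p_Y)\bigr]\circ(f_X,p_Y),
\]
and then applies $G$-reducibility to the known equivalence $(p_X,f_Y)\circ(\widetilde{f_X},p_Y)=\bigl(\widetilde{f_X},\, f_Y\circ(\widetilde{f_X},p_Y)\bigr)$ --- this is your $a\circ c^{-1}$ --- to conclude that its $YY$-corner $f_Y\circ(\widetilde{f_X},p_Y)\circ\iota_Y$ lies in $\AutG(Y)$, whence the left factor is in $\AutG_X(X\times Y)$ by Proposition~\ref{egred}(b). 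So both proofs feed reducibility through a composite built from your $a$ and $c$; the difference is that the paper extracts a factorization of $f$ itself as a product of two $G$-equivalences, while you instead read off the Schur complement on $\pi_n$ and invoke Whitehead. The paper's version is shorter and entirely avoids the non-abelian $\pi_1$ caveat you flag; your matrix picture, on the other hand, makes the mechanism transparent and resonates nicely with the $LU$-decomposition viewpoint of Section~\ref{slu}.
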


\begin{proof}
The forward implication is trivial. 
	
Conversely, let $f\in \map^{G}(X\times Y, X\times Y)$ such that $f_{XX}\in \AutG(X), f_{YY}\in \AutG(Y)$. By Proposition \ref{egred}[(a), (b)], we have $(f_{X},p_{Y})\in \AutG(X\times Y)$  with inverse $(\widetilde{f_{X}},p_{Y})\in \AutG(X\times Y)$.
Observe that, 
$$f=(f_{X},f_{Y})=[p_{X},f_{Y}\circ (f_{X},p_{Y})^{-1}]\circ(f_{X},p_{Y})=[p_{X},f_{Y}\circ (\widetilde{f_{X}},p_{Y})]\circ(f_{X},p_{Y}).$$ 
Moreover, $$(p_{X},f_{Y})\circ (\widetilde{f_{X}},p_{Y})=[p_{X}\circ (\widetilde{f_{X}},p_{Y}), f_{Y}\circ (\widetilde{f_{X}},p_{Y})]\in \AutG(X\times Y).$$
By applying the $G$-reducibility to the right-hand side, we say that $f_{Y}\circ (\widetilde{f_{X}},p_{Y})\circ \iota_{Y}\in \AutG(Y)$. Therefore from Proposition \ref{egred}(b), we have $(p_{X},f_{Y}\circ (\widetilde{f_{X}},p_{Y}))\in \AutG(X\times Y)$.
Consequently, $f=[p_{X},f_{Y}\circ (\widetilde{f_{X}},p_{Y})]\circ(f_{X},p_{Y})\in \AutG(X\times Y)$.

\end{proof}

We have seen in Proposition \ref{egred} that $\AutG_{X}(X\times Y)$ and $\AutG_{Y}(X\times Y)$ are two subgroups of $\AutG(X\times Y).$ We now show that the product of these subgroups is the whole group.
\begin{theorem}\label{Mgred}
Let  us asssume all self-homotopy equivalences of $X\times Y$ are $G$-reducible. Then  $$ \AutG(X\times Y)=\AutG_{X}(X\times Y)\cdot \AutG_{Y}(X\times Y).$$
\end{theorem}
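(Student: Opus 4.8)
The plan is to show that every $f=(f_X,f_Y)\in\AutG(X\times Y)$ can be written as a product of an element of $\AutG_X(X\times Y)$ with an element of $\AutG_Y(X\times Y)$. The natural candidate for the factorization, in analogy with Pave\v{s}i\'c's non-equivariant argument, is
$$f=(f_X,f_Y)=\bigl[p_X,\;f_Y\circ(\widetilde{f_X},p_Y)\bigr]\circ(f_X,p_Y),$$
where $\widetilde{f_X}$ is chosen so that $(\widetilde{f_X},p_Y)$ is a $G$-homotopy inverse of $(f_X,p_Y)$. So the first step is to justify that $(f_X,p_Y)$ is itself a $G$-homotopy equivalence, lying in $\AutG_Y(X\times Y)$. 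Since $f$ is $G$-reducible, $f_{XX}=p_X\circ f\circ\iota_X\in\AutG(X)$; now $(f_X,p_Y)_{XX}=f_{XX}$ and $(f_X,p_Y)_{YY}=p_Y\circ\iota_Y=\Id_Y\in\AutG(Y)$, so by Proposition \ref{mgred} we get $(f_X,p_Y)\in\AutG(X\times Y)$, and clearly $p_Y\circ(f_X,p_Y)=p_Y$, so it lies in $\AutG_Y(X\times Y)$. Its inverse has the form $(\widetilde{f_X},p_Y)$ by the $Y$-analogue of Proposition \ref{egred}(a).

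The second step is to verify that the remaining factor $\bigl[p_X,\;f_Y\circ(\widetilde{f_X},p_Y)\bigr]$ is a $G$-homotopy equivalence lying in $\AutG_X(X\times Y)$; this is where I expect the real content to sit, and it is essentially the computation already carried out inside the proof of Proposition \ref{mgred}. By Proposition \ref{egred}(b) applied with $g=f_Y\circ(\widetilde{f_X},p_Y)$, it suffices to show $g\circ\iota_Y=f_Y\circ(\widetilde{f_X},p_Y)\circ\iota_Y\in\AutG(Y)$. To see this one considers the composite $(p_X,f_Y)\circ(\widetilde{f_X},p_Y)$: on the left, $(p_X,f_Y)$ is a $G$-homotopy equivalence because $f=(f_X,f_Y)$ is $G$-reducible (so $f_{YY}=f_Y\circ\iota_Y\in\AutG(Y)$, hence $(p_X,f_Y)\in\AutG(X\times Y)$ by Proposition \ref{egred}(b)), and $(\widetilde{f_X},p_Y)$ was shown to be a $G$-homotopy equivalence in Step 1; therefore their composite $\bigl[p_X\circ(\widetilde{f_X},p_Y),\,f_Y\circ(\widetilde{f_X},p_Y)\bigr]=\bigl[\widetilde{f_X},\,f_Y\circ(\widetilde{f_X},p_Y)\bigr]$ is a $G$-homotopy equivalence, and applying $G$-reducibility to it gives that its $YY$-component $f_Y\circ(\widetilde{f_X},p_Y)\circ\iota_Y$ lies in $\AutG(Y)$, which is exactly what we needed. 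Hence $\bigl[p_X,\,f_Y\circ(\widetilde{f_X},p_Y)\bigr]\in\AutG_X(X\times Y)$.

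Finally I would check the algebraic identity itself: the composite $\bigl[p_X,\,f_Y\circ(\widetilde{f_X},p_Y)\bigr]\circ(f_X,p_Y)$ has first coordinate $p_X\circ(f_X,p_Y)=f_X$ and second coordinate $f_Y\circ(\widetilde{f_X},p_Y)\circ(f_X,p_Y)=f_Y\circ\bigl((\widetilde{f_X},p_Y)\circ(f_X,p_Y)\bigr)=f_Y\circ\Id_{X\times Y}=f_Y$, so the composite equals $(f_X,f_Y)=f$. This exhibits $f$ as an element of $\AutG_X(X\times Y)\cdot\AutG_Y(X\times Y)$, and since the reverse inclusion is automatic (both are subgroups of $\AutG(X\times Y)$ by Proposition \ref{egred}(c) and Corollary \ref{lgred}), the two sides are equal. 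The only genuinely delicate point is Step 2 — making sure the factor $(p_X,f_Y\circ(\widetilde{f_X},p_Y))$ is an equivariant equivalence — and that is handled by feeding the composite $(p_X,f_Y)\circ(\widetilde{f_X},p_Y)$ through the $G$-reducibility hypothesis exactly as in Proposition \ref{mgred}; everything else is the non-equivariant bookkeeping with the subscript $H$ suppressed, valid fixed-subgroup by fixed-subgroup thanks to the characterization of $G$-homotopy equivalences via fixed points.
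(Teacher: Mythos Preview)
Your proof is correct and follows essentially the same route as the paper: the same factorization $f=[p_X,\,f_Y\circ(\widetilde{f_X},p_Y)]\circ(f_X,p_Y)$, with the key point being that $G$-reducibility applied to a suitable composite forces $f_Y\circ(\widetilde{f_X},p_Y)\circ\iota_Y\in\AutG(Y)$, whence the first factor lies in $\AutG_X(X\times Y)$ by Proposition~\ref{egred}(b). The only cosmetic difference is that the paper uses the composite $f\circ(\widetilde{f_X},p_Y)$ in place of your $(p_X,f_Y)\circ(\widetilde{f_X},p_Y)$, and invokes Corollary~\ref{lgred} rather than Proposition~\ref{mgred} for the second factor; both choices yield the same $YY$-component and the argument is otherwise identical.
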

\begin{proof}
Clearly, $\AutG_{X}(X\times Y)\cap \AutG_{Y}(X\times Y)=(p_{X},p_{Y})$. Since the intersection of the two subgroups is trivial, therefore if the factorization of the element in $\AutG(X\times Y)$ exists, it will be unique. We have the factorization, $$f=(f_{X},f_{Y})=[p_{X},f_{Y}\circ (f_{X},p_{Y})^{-1}]\circ(f_{X},p_{Y})=[p_{X},f_{Y}\circ (\widetilde{f_{X}},p_{Y})]\circ(f_{X},p_{Y}).$$ From Corollary \ref{lgred} we can say that the second factor $(f_{X},p_{Y})$ in the decomposition of $f$ is in $\AutG_{Y}(X\times Y)$. Also its inverse $(\widetilde{f}_{X},p_{Y})\in \AutG(X\times Y)$. It is sufficient to show that the first factor is in $\AutG_{X}(X\times Y)$. Now $$(f_{X},f_{Y})\circ (\widetilde{f}_{X},p_{Y})=[f_{X}\circ (\widetilde{f}_{X},p_{Y}),f_{Y}\circ (\widetilde{f}_{X},p_{Y})]\in \AutG(X\times Y),$$ using $G$-reducibility we have, $f_{Y}\circ (\widetilde{f}_{X},p_{Y})\circ \iota_{Y}\in \AutG(Y)$. Therefore by Proposition \ref{egred}(b), we can say that the first factor $[p_{X},f_{Y}\circ (\widetilde{f_{X}},p_{Y})]\in \AutG_{X}(X\times Y)$.

\end{proof}

\begin{lemma}\label{Lgred}
Let Y be a connected $G$-CW complex such that $[Y,\aut_{1}(X)]_{G}=\{ \ast \}$. For every $G$-map $h\colon X\times Y\to X$  such that $h\circ \iota_{X}\in \AutG(X)$, then the relation $h\simeq h\circ \iota_{X}\circ p_{X}$  holds.
\end{lemma}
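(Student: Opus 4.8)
The plan is to transfer the statement to fixed-point sets and use the adjunction to convert a map into the problem of connecting two points in a mapping space, where the hypothesis $[Y,\aut_1(X)]_G=\{\ast\}$ does the work. First I would recall that $h\simeq h\circ\iota_X\circ p_X$ as $G$-maps $X\times Y\to X$ if and only if $h^H\simeq (h\circ\iota_X\circ p_X)^H = h^H\circ(\iota_X)^H\circ(p_X)^H$ as ordinary maps $X^H\times Y^H\to X^H$ for every $H\leq G$, but in fact I want a single equivariant homotopy, so it is cleaner to work directly with the $G$-homotopy. Consider the two $G$-maps $h$ and $h\circ\iota_X\circ p_X$ from $X\times Y$ to $X$. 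Restricting along $\iota_X\colon X\hookrightarrow X\times Y$, both become $h\circ\iota_X$, so the two maps agree on $X\times\{\ast\}$ (on the nose, or at least up to a homotopy we can absorb). The difference is therefore controlled by how $h$ varies in the $Y$-direction.

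The key step is to use adjoint associativity, exactly as in the proof of Proposition \ref{egred}(d). Since $h\circ\iota_X\in\AutG(X)$, write $a = h\circ\iota_X$. Then $a^{-1}\circ h\colon X\times Y\to X$ is a $G$-map restricting to $\Id_X$ on $X\times\{\ast\}$; showing $h\simeq a\circ p_X$ is equivalent to showing $a^{-1}\circ h\simeq p_X$, i.e. that $a^{-1}\circ h$ is $G$-homotopic to the projection. Hmm, but $a^{-1}\circ h$ need not restrict to $\Id_X$ on $\{\ast\}\times Y$; instead it restricts to $a^{-1}\circ h\circ\iota_Y\colon Y\to X$. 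Now here is where the orientation of the adjunction matters: I should take the adjoint of $a^{-1}\circ h$ in the $X$-variable is wrong; rather I form the map $Y\to\map(X,X)$, $y\mapsto (x\mapsto a^{-1}h(x,y))$, which is pointed (sends $\ast$ to $\Id_X$) and $G$-equivariant with the conjugation action, by the same computation as in Proposition \ref{egred}(d). Since $Y$ is connected and the adjoint hits $\Id_X$, its image lands in $\aut_1(X)$, so it defines a class in $[Y,\aut_1(X)]_G$, which is trivial by hypothesis. Therefore this adjoint map is $G$-homotopic (rel basepoint) to the constant map at $\Id_X$, and unadjointing that $G$-homotopy gives a $G$-homotopy from $a^{-1}\circ h$ to the map $(x,y)\mapsto\Id_X(x)=p_X(x,y)$, i.e. $a^{-1}\circ h\simeq p_X$. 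Composing with $a$ yields $h\simeq a\circ p_X = h\circ\iota_X\circ p_X$, as desired.

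The step I expect to be the main obstacle is verifying that the relevant map is genuinely a \emph{pointed $G$-map into $\aut_1(X)$} and that triviality of $[Y,\aut_1(X)]_G$ then produces a \emph{based $G$-homotopy} whose adjoint is the homotopy I want — in particular making sure the homotopy can be taken relative to the basepoint of $Y$ so that it stays compatible with the condition on $X\times\{\ast\}$, and checking that the adjunction between $\map^G(X\times Y,X)$ and $\map^G(Y,\map(X,X))$ behaves correctly with respect to the $G$-actions (the conjugation action $(g\cdot f)(x)=gf(g^{-1}x)$ on $\map(X,X)$). Once the adjunction bookkeeping is set up, everything else is a direct translation of the non-equivariant argument, handled uniformly by the fact that a $G$-map of $G$-CW complexes is a $G$-homotopy equivalence iff it is so on all $H$-fixed sets.
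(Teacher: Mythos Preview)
Your argument is correct, but it takes a different route from the paper. You work directly with the adjunction $\map^G(X\times Y,X)\cong \map^G(Y,\map(X,X))$: after normalizing by $a=h\circ\iota_X$ you show the adjoint $Y\to\aut_1(X)$ is null, then unadjoint. The paper instead invokes the split exact sequence of Proposition~\ref{egred}(d) with the roles of $X$ and $Y$ swapped,
\[
0\to [Y,\aut_1(X)]_G\to \AutG_Y(X\times Y)\xrightarrow{\ \phi\ }\AutG(X)\to 0,
\]
observes that the hypothesis makes $\phi$ an isomorphism, and then simply notes that both $(h,p_Y)$ and $(h\circ\iota_X\circ p_X,p_Y)$ lie in $\AutG_Y(X\times Y)$ (via Proposition~\ref{egred}(b)) and have the same image $h\circ\iota_X$ under $\phi$; injectivity finishes. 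In effect the paper packages your adjunction computation into the kernel identification already done in Proposition~\ref{egred}(d), so it never has to introduce $a^{-1}$ or worry about the basepoint landing exactly at $\Id_X$ rather than at $a^{-1}\circ a\simeq\Id_X$. Your approach is more self-contained and makes the mechanism explicit; the paper's is shorter and sidesteps the basepoint bookkeeping you flagged as the main obstacle.
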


\begin{proof}
From \ref{egred}(d), we have a short exact sequence, $$0\to [Y,\aut_{1}(X)]_{G}\to \AutG_{Y}(X\times Y)\xrightarrow[]{\phi} \AutG(X)\to 0,$$ defined as $$\phi(f,p_{Y}):=f\circ \iota_{X}.$$
Since $[Y,\aut_{1}(X)]_{G}=\{\ast\}$, therefore $\phi$ is an isomorphism.
Let the splitting map of $\phi$ is $\psi\colon \AutG(X)\to \AutG_{Y}(X\times Y)$, defined as $\psi(g):=(g\circ p_{X},p_{Y})$.
Now for any $G$-map $h\colon X\times Y\to X$ with $h\circ \iota_{X}\in \AutG(X)\Rightarrow (h,p_{Y})\in \AutG_{Y}(X\times Y)$. Again $$h\circ \iota_{X}\in \AutG(X)\Rightarrow \psi(h\circ i_{X})=(h\circ \iota_{X}\circ p_{X},p_{Y})\in \AutG_{Y}(X\times Y).$$
Since $\phi \big(h,p_{Y}\big)=\phi \big(h\circ \iota_{X}\circ p_{X},p_{Y} \big)$, therefore $(h,p_{Y})\simeq (h\circ \iota_{X}\circ p_{X},p_{Y})$, otherwise $\phi$ fails to be an isomorphism. Consequently we get the desired relation $h\simeq h\circ \iota_{X}\circ p_{X}$.

%Consider a map $(\iota_{X})_{*}\colon [X\times Y,X]_{G}\to [X,X]_{G}$, defined as $(\iota_{X})_{*}(f):=f\circ \iota_{X},~~~~~~~\forall f\in [X\times Y,X]_{G}$, where $\iota_{X}\colon X\to X\times Y$ is an inclusion map. We want to prove that for any $k\in \AutG(X)\subset [X,X]_{G}$ there exists an bijection between the fibers $(\iota_{X})_{*}^{-1}(k)$ and $(\iota_{X})_{*}^{-1}(Id_{X})$. Since $k\in \AutG(X),~~~~ \exists ~k'\in \AutG(X)$ such that $k\circ k'=k'\circ k=Id_{X}$.
%Consider a map $$\phi\colon (\iota_{X})_{*}^{-1}(k)\to (\iota_{X})_{*}^{-1}(Id_{X}),$$ defined as $$\phi(g)=k'\circ g,~~~~~\forall ~~g\in (\iota_{X})_{*}^{-1}(k).$$ Clearly it has an inverse map $$\psi\colon (\iota_{X})_{*}^{-1}(Id_{X})\to (\iota_{X})_{*}^{-1}(k),$$ defined by $$\psi(f):=k\circ f,~~~~~~\forall~f\in (\iota_{X})_{*}^{-1}(Id_{X}).$$
%Therefore the fibers $(\iota_{X})_{*}^{-1}(k)$ and $(\iota_{X})_{*}^{-1}(Id_{X})$ are bijective for any $k\in \AutG(X)$.

%Hence for any $G$-map $h\colon X\times Y\to X$ with $h\circ \iota_{X}\in \AutG(X)$, we can say that the fibers $(\iota_{X})_{*}^{-1}(h\circ \iota_{X})$ and $(\iota_{X})_{*}^{-1}(Id_{X})$ are bijective. We know that $(\iota_{X})_{*}^{-1}(Id_{X})$ can be identified with $[Y,\aut_{1}(X)]_{G}$. Since $[Y,\aut_{1}(X)]_{G}=\{\ast\}$, therefore the points $h$ and $h\circ \iota_{X}\circ p_{X}$ in $(\iota_{X})_{*}^{-1}(h\circ \iota_{X})$ are same, i.e. $h\simeq h\circ \iota_{X}\circ p_{X}$.
\end{proof}

We now obtain a description of $\Aut^G(X\times Y)$ in terms of $\Aut^G(X)$ and $\Aut^G(Y)$, which is an equivariant version of Booth and Heath \cite[Theorem 2.7]{Gps}.

\begin{theorem}\label{t2prodexact}
	Let $X$ and $Y$ be connected $G$-CW complexes  and all self-homotopy equivalences of $X\times Y$ are $G$-reducible. If $[Y,\aut_{1}(X)]_{G}=\{ \ast \}$, then we have a split exact sequence $$0\to [X,\aut_{1}(Y)]_{G}\to \AutG(X\times Y)\xrightarrow[]{\phi} \AutG(X)\times \AutG(Y)\to 0.$$ where $\phi(f)=(f_{XX},f_{YY})$.
\end{theorem}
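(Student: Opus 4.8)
The plan is to establish the split exact sequence by assembling the pieces already in place. First I would verify that $\phi\colon \AutG(X\times Y)\to \AutG(X)\times\AutG(Y)$, $f\mapsto(f_{XX},f_{YY})$, is a well-defined group homomorphism. Well-definedness is exactly the $G$-reducibility hypothesis, and multiplicativity follows from the matrix description $M_n(f^H)$ in the proof of Proposition \ref{pgred}(b): under $G$-reducibility one checks that the off-diagonal compositions contributing to $(f\circ g)_{XX}$ and $(f\circ g)_{YY}$ vanish up to the relevant equivalences, so that $(f\circ g)_{XX}\simeq f_{XX}\circ g_{XX}$ and similarly for $Y$. Alternatively, and more cleanly, I would factor $\phi$ through the product of the two already-constructed homomorphisms $\AutG_Y(X\times Y)\xrightarrow{\phi_X}\AutG(X)$ and $\AutG_X(X\times Y)\xrightarrow{\phi_Y}\AutG(Y)$ of Proposition \ref{egred}(d), using the decomposition of Theorem \ref{Mgred}.

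Next I would show $\phi$ is surjective: given $(\alpha,\beta)\in\AutG(X)\times\AutG(Y)$, the element $(\alpha\circ p_X,\beta\circ p_Y)\in\map^G(X\times Y,X\times Y)$ has $(\cdot)_{XX}=\alpha$ and $(\cdot)_{YY}=\beta$, hence lies in $\AutG(X\times Y)$ by Proposition \ref{mgred}, and maps to $(\alpha,\beta)$. This map $s(\alpha,\beta)=(\alpha\circ p_X,\beta\circ p_Y)$ is visibly a group homomorphism and a section of $\phi$, which will give the splitting.

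Then I would identify the kernel. If $f=(f_X,f_Y)\in\Ker\phi$, then $f_{XX}\simeq\Id_X$ and $f_{YY}\simeq\Id_Y$. Using Theorem \ref{Mgred} write $f=(p_X,f_Y')\circ(f_X,p_Y)$ with the first factor in $\AutG_X(X\times Y)$ and the second in $\AutG_Y(X\times Y)$; comparing $XX$- and $YY$-components shows $(f_X,p_Y)\in\Ker(\phi_X)\subseteq\AutG_Y(X\times Y)$, which by Proposition \ref{egred}(d) applied to $\AutG_Y$ (with $X,Y$ swapped) is $[Y,\aut_1(X)]_G=\{\ast\}$ by hypothesis; hence $(f_X,p_Y)\simeq(p_X,p_Y)$, equivalently $f_X\simeq p_X$. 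Here is where Lemma \ref{Lgred} does the work: $f_X\colon X\times Y\to X$ satisfies $f_X\circ\iota_X=f_{XX}\in\AutG(X)$, so $f_X\simeq f_{XX}\circ p_X\simeq p_X$. Thus $f\simeq(p_X,f_Y)\in\AutG_X(X\times Y)$ with $f_Y\circ\iota_Y=f_{YY}\simeq\Id_Y$, so $f\in\Ker(\phi_Y)$, which by Proposition \ref{egred}(d) is identified with $[X,\aut_1(Y)]_G$. Conversely any element of $[X,\aut_1(Y)]_G$ gives such an $f$ with trivial image, so $\Ker\phi\cong[X,\aut_1(Y)]_G$. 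Finally I would note exactness in the middle is immediate once the kernel is computed, and the existence of the section $s$ gives the splitting.

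I expect the main obstacle to be the kernel computation, specifically the step reducing a general $f\in\Ker\phi$ to something in $\AutG_X(X\times Y)$: this is precisely where the asymmetric hypothesis $[Y,\aut_1(X)]_G=\{\ast\}$ and Lemma \ref{Lgred} are needed, and care is required to keep track of which factor lies over which space when invoking Proposition \ref{egred}(d) in its two forms. The surjectivity and the verification that $\phi$ is a homomorphism are routine given Proposition \ref{mgred} and the matrix formalism, but I would be careful to phrase the homomorphism property either via the $\AutG_X\cdot\AutG_Y$ factorization or by a direct fiberwise argument on fixed-point sets rather than waving at it.
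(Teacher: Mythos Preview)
Your splitting and surjectivity arguments are fine and match the paper. Your kernel computation is also correct, and in fact slightly cleaner than the paper's: the paper identifies $\Ker\phi$ directly with $[Y,\aut_1(X)]_G\times[X,\aut_1(Y)]_G$ via adjunction and then kills the first factor, whereas you use Lemma~\ref{Lgred} to force $f_X\simeq p_X$ and then invoke Proposition~\ref{egred}(d).

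The real gap is in the homomorphism step, and this is where you have misjudged the difficulty. Neither of your proposed arguments works. $G$-reducibility alone does \emph{not} make the off-diagonal contributions to $(f\circ g)_{XX}$ vanish: one has $(f\circ g)_{XX}=f_X\circ(g_{XX},g_{YX})$, and there is no reason for this to be $f_{XX}\circ g_{XX}$ unless $f_X$ is already known to factor through $p_X$. Likewise the factorization $\AutG(X\times Y)=\AutG_X\cdot\AutG_Y$ is only a product of subsets, not a semidirect product, so it does not let you ``factor $\phi$ through'' the two homomorphisms of Proposition~\ref{egred}(d); knowing $\phi$ on each factor says nothing about $\phi(fg)$ for general $f,g$.

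The paper's proof shows that the asymmetric hypothesis $[Y,\aut_1(X)]_G=\{*\}$ is needed precisely here, via Lemma~\ref{Lgred}: since $f_X\circ\iota_X=f_{XX}\in\AutG(X)$, Lemma~\ref{Lgred} gives $f_X\simeq f_{XX}\circ p_X$, whence $(f\circ g)_{XX}\simeq f_{XX}\circ g_{XX}$; and replacing $g_X$ by $g_{XX}\circ p_X$ similarly gives $(f\circ g)_{YY}\simeq f_{YY}\circ g_{YY}$. So the same lemma you correctly invoke for the kernel is actually the key to multiplicativity as well. Without that hypothesis $\phi$ is generally not a homomorphism, so your remark that the homomorphism property is ``routine given Proposition~\ref{mgred} and the matrix formalism'' is where the argument breaks.
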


\begin{proof}
First we show that $\phi$ is a homomorphism. Let $f=(f_{X},f_{Y}), g=(g_{X},g_{Y})\in \AutG(X\times Y)$, then 
\begin{align*}
\phi(f\circ g)&=\phi \big(\big[f_{X}\circ (g_{X},g_{Y}),f_{Y}\circ (g_{X},g_{Y})\big]\big)\\
&=\big[f_{X}\circ (g_{X},g_{Y})\circ \iota_{X},f_{Y}\circ (g_{X},g_{Y})\circ \iota_{Y}\big]\\
&=\big[f_{X}\circ \iota_{X}\circ p_{X}\circ (g_{X},g_{Y})\circ \iota_{X},f_{Y}\circ (g_{X}\circ \iota_{X}\circ p_{X},g_{Y})\circ \iota_{Y}\big]~~~(\text{by Lemma }\ref{Lgred})\\
&=(f_{X}\circ \iota_{X}\circ g_{X}\circ \iota_{X},f_{Y}\circ \iota_{Y}\circ g_{Y}\circ \iota_{Y})\\
&=(f_{XX}\circ g_{XX},f_{YY}\circ g_{YY})\\
&=\phi(f)\circ \phi(g).
\end{align*}	

Consider an another map $\psi:\AutG(X)\times \AutG(Y)\to \AutG(X\times Y)$, define as $$\psi(h_{1},h_{2}):=(h_{1}\circ p_{X},h_{2}\circ p_{Y}),~~~~~~\forall (h_{1},h_{2})\in	\AutG(X)\times \AutG(Y).$$
From Proposition \ref{mgred}, we can say that the map $\psi$ is well-defined. Clearly $\phi \circ \psi=\Id_{\AutG(X)\times \AutG(Y)}$, therefore $\phi$ is onto homomorphsim.

Now we determine $\Ker(\phi)$. An element $f=(f_{X},f_{Y})\in \AutG(X\times Y)$ is in $\Ker(\phi)$ if and only if $f_{XX}=\Id_{X},~~~ f_{YY}=\Id_{Y}$. Therefore $f=(f_{X},f_{Y})\in \Ker(\phi)$ corresponds under adjoint associativity to the base point preserving maps $\widetilde{f}_{X}:Y\to \map(X,X)$ and $\widetilde{f}_{Y}:X\to \map(Y,Y)$, defined as $\widetilde{f}_{X}(y)(x):=f_{X}(x,y)$ and $\widetilde{f}_{Y}(x)(y):=f_{Y}(x,y)$ respectively. Since both $X$ and $Y$ are connected, therefore the images of $\widetilde{f}_{X}$ and $\widetilde{f}_{Y}$ are contained in the component of $\map(X,X)$ and $\map(Y,Y)$ which contains  $\Id_{X}$ and $\Id_{Y}$ respectively. These components are denoted by $\aut_{1}(X)$ and $\aut_{1}(Y)$. Again the maps $\widetilde{f}_{X}$ and $\widetilde{f}_{Y}$ are $G$-maps. Therefore the $\Ker(\phi)$ can be identified with $[Y,\aut_{1}(X)]_{G}\times [X,\aut_{1}(Y)]_{G}$. Since $[Y,\aut_{1}(X)]_{G}=\{\ast \}$, we get the required split exact sequence.

\end{proof}

\end{section}
	
\section{$\AutG(X_1\times \ldots  \times X_n)$} \label{snprod}

Now we use the results of the previous section to generalize for higher product of based $G$-CW complexes $X_{1}, X_{2},\ldots,X_{n}$. Consider $$\nprod = X_{1}\times \ldots \times X_{n}$$ with diagonal $G$-action. For a $G$-map $f = (f_1, \ldots, f_n) \colon \nprod \to \nprod$, define $f_{ij}\colon X_j\to X_i$ as the composition  $f_{ij}\colon =p_{i}\circ f\circ \iota_{j}=f_{i}\circ \iota_{j}$, where 
$$p_{i}\colon \nprod \to X_i, ~~~ \iota_{j} \colon X_j \to \nprod, ~~1\leq, i,j \leq n, $$
are projections and inclusions respectively.  Clearly $p_{i}, \iota_{j}$ are $G$-maps, therefore $f_{ij}$ are $G$-maps and the following diagrams are commute.

$$
\xymatrix{
	\nprod \ar[rr]^{f} \ar[rrdd]^{f_i}  &&  \nprod \ar[dd]^{p_{i}} \\\\
	X_{j} \ar@{-->}[rr]_{f_{ij}}\ar[uu]^{\iota_{j}} && X_{i}
}
$$

\begin{definition}
A $G$-self-homotopy equivalence $f$ of $\nprod$ is said to be \emph{$G$-reducible} if the maps $f_{ii} \colon X_i \to X_i$ are $G$-self-homotopy equivalences for all $1\leq i\leq n$.
\end{definition}

Let $\prod_{\hat{i}}:=X_{1}\times \ldots \times \widehat{X_{i}}\times \ldots \times X_{n}$, i.e. $\prod_{\hat{i}}$ denote the subproduct of $\nprod$ obtained by omitting $X_{i}$.
We have natural projection $\nprod \to \prod_{\hat{i}}$. We denote by $\AutG_{\prod_{\hat{i}}}(\nprod)$ the set of $G$-homotopy classes of self-equivalences of $\nprod$ over $\prod_{\hat{i}}.$ 
The following Lemma shows that elements of $\AutG_{\prod_{\hat{i}}}(\nprod)$ are of the form $(p_{1},\ldots,p_{i-1},f_{i},p_{i+1},\ldots, p_{n})$ where $f_{ii} \in \Aut^G(X_i)$.

\begin{lemma} \label{lreplace}
Given a $G$-map $f_i \colon \nprod \to X_i$, we have $f = (p_{1},\ldots,p_{i-1},f_{i},p_{i+1},\ldots, p_{n}) \in \AutG(\nprod)$ iff $f_{ii} \in \Aut^G(X_i)$ for any $1\leq i \leq n.$ Moreover  the inverse of the map $f$ is of the form $(p_{1},\ldots,p_{i-1},\widetilde{f}_{i},p_{i+1},\ldots, p_{n}),$ for some $G$-map $\widetilde{f}_i: \bX \to X_i.$

\end{lemma}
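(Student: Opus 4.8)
The plan is to reduce the statement to the two-factor case handled in Proposition~\ref{egred}, by viewing $\bX = X_i \times \prod_{\hat{i}}$ after reordering coordinates. Concretely, write $\bX \cong X_i \times \prod_{\hat i}$ via the obvious coordinate permutation, which is a $G$-homeomorphism since the $G$-action is diagonal. Under this identification the map $f = (p_1,\ldots,p_{i-1},f_i,p_{i+1},\ldots,p_n)$ corresponds to a map of the form $(f_i, p_{\prod_{\hat i}})$, i.e.\ a self-map of $X_i \times \prod_{\hat i}$ lying over $\prod_{\hat i}$ (projection onto the second factor is preserved). This is exactly the shape $(g, p_Y)$ studied in Proposition~\ref{egred}, with the roles of $X$ and $Y$ played by $\prod_{\hat i}$ and $X_i$ respectively.

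First I would make the coordinate-permutation identification precise and note that $(f_i)\circ \iota_{X_i}$ in the reordered product is exactly $f_{ii}\colon X_i \to X_i$. Then I would invoke Proposition~\ref{egred}(b) (in the version with $X$ and $Y$ swapped): the map $(f_i, p_{\prod_{\hat i}})$ is a $G$-homotopy equivalence of $X_i \times \prod_{\hat i}$ if and only if $f_i \circ \iota_{X_i} = f_{ii} \in \Aut^G(X_i)$. Transporting back along the $G$-homeomorphism $\bX \cong X_i \times \prod_{\hat i}$ gives the first assertion: $f \in \Aut^G(\bX)$ iff $f_{ii}\in \Aut^G(X_i)$. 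For the ``moreover'' part, I would apply Proposition~\ref{egred}(a), which produces a $G$-homotopy inverse again lying over $\prod_{\hat i}$, namely of the form $(\widetilde f_i, p_{\prod_{\hat i}})$; transporting back shows the inverse of $f$ has the form $(p_1,\ldots,p_{i-1},\widetilde f_i, p_{i+1},\ldots,p_n)$ for some $G$-map $\widetilde f_i\colon \bX \to X_i$. One should also remark that $\widetilde f_{ii}$ is then the $G$-homotopy inverse of $f_{ii}$ in $\Aut^G(X_i)$, which is immediate from $f\circ \widetilde f \simeq \Id$ after projecting with $p_i$ and precomposing with $\iota_i$.

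I do not expect any serious obstacle here: the entire content is already packaged in Proposition~\ref{egred}, and the only thing to check carefully is that the coordinate permutation is genuinely $G$-equivariant (true because the $G$-action on $\bX$ is diagonal, so permuting factors commutes with the action) and that it identifies the relevant projections and inclusions correctly. The mild bookkeeping point to be careful about is the asymmetry in the notation of Proposition~\ref{egred}, which is stated for maps of the form $(p_X, g)$ over the \emph{first} factor, whereas here the preserved factor is $\prod_{\hat i}$, so one applies the proposition with $X \leftrightarrow \prod_{\hat i}$ and $Y \leftrightarrow X_i$; writing ``$(g, p_X)$ over the second factor'' is handled symmetrically (and is already used implicitly in the proof of Theorem~\ref{Mgred} via $(f_X, p_Y) \in \Aut^G_Y(X\times Y)$). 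So the proof is essentially a citation of Proposition~\ref{egred}(a),(b) after a harmless reindexing of coordinates.
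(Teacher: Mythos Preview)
Your proposal is correct and follows essentially the same route as the paper: both arguments conjugate by a coordinate permutation to reduce to the two-factor case and then cite Proposition~\ref{egred}(a),(b). The only cosmetic difference is that the paper moves $X_i$ to the \emph{last} slot (so the map becomes $(p'_{\bar X},f'_i)$ and Proposition~\ref{egred} applies verbatim in its $(p_X,g)$ form), whereas you move it to the first slot and invoke the symmetric $(g,p_Y)$ version; either way the content is the same.
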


\begin{proof}
Take $\bar{\bf X}= X_{1}\times \ldots \times X_{i-1}\times X_{i+1}\times \ldots \times X_{n}\times X_{i}$.
Consider the permutation map $\phi:\nprod \to \bar{\bf X},$ defined as $(x_{1},\ldots,x_{n}) \mapsto (x_{1},\ldots,x_{i-1},x_{i+1},\ldots,x_{n},x_{i}).$ 

Clearly $\phi$ is a homeomorphism. Therefore $(p_{1},\ldots,p_{i-1},f_{i},p_{i+1},\ldots,p_{n})\in \AutG(\nprod)$ if and only if
$\phi \circ (p_{1},\ldots,p_{i-1},f_{i},p_{i+1},\ldots,p_{n})\circ \phi^{-1}=(p'_{1},\ldots,p'_{i-1},p'_{i+1},\ldots,p'_{n},f'_{i}) \in \AutG(\bar{\bf X}),$
where $p'_{j}:\bar{\bf X}\to X_{j}$ are all projection maps for $j \in \{1,\ldots,i-1,i+1,\ldots,n\}$.

$$
\xymatrix{
	\nprod \ar[dd]_{f}  \ar[rrrr]^{\phi}  &&&&  \bar{\bf X} \ar[dd]^{\phi \circ f\circ \phi^{-1}} \\\\
	\nprod \ar[rrrr]_{\phi} &&&& \bar{\bf X}
}
$$

\noindent Take $\bar{\b X}=X_{1}\times \ldots \times X_{i-1}\times X_{i+1}\times \ldots \times X_{n}$. So $(p'_{\bar{\b X}},f'_{i})=(p'_{1},\ldots,p'_{i-1},p'_{i+1},\ldots,p'_{n},f'_{i})$ and $\bar{\bf X} = \bar{\b X}\times X_{i}$. 
\noindent From Proposition \ref{egred}(b), we can say that the map $(p'_{\bar{\b X}},f'_{i})=(p'_{1},\ldots,p'_{i-1},p'_{i+1},\ldots,p'_{n},f'_{i})\in \AutG(\bar{\bf X})$ if and only if $f'_{ii}\in \AutG(X_{i})$.
Therefore, $$\phi^{-1}\circ (p'_{1},\ldots,p'_{i-1},p'_{i+1},\ldots,p'_{n},f'_{i})\circ \phi = (p_{1},\ldots,p_{i-1},f_{i},p_{i+1},\ldots,p_{n})\in \AutG(\nprod)$$ 
$$\iff f_{ii}\in \AutG(X_{i}),~~~~~~(\because f'_{ii} = f'_{i}\circ \iota'_{i} = f_{i}\circ \phi^{-1}\circ \iota'_{i} = f_{i}\circ \iota_{i} = f_{ii}).$$
Since $(p'_{\bar{\b X}},f'_{i})\in \AutG(\bar{\bf X})$, using Proposition \ref{egred}(a), we can say that $(p'_{\bar{\b X}},\widetilde{f}'_{i})$ is a $G$-homotopy inverse of $(p'_{\bar{\b X}},f'_{i})$ in $\AutG(\bar{\bf X})$. Therefore the map $\phi^{-1}\circ (p'_{\bar{\b X}},\widetilde{f}'_{i})\circ \phi = (p_{1},\ldots,p_{i-1},\widetilde{f}_{i},p_{i+1},\ldots,p_{n})$ is the $G$-inverse of $$\phi^{-1}\circ (p'_{\bar{\b X}},f'_{i})\circ \phi = (p_{1},\ldots,p_{i-1},f_{i},p_{i+1},\ldots,p_{n})$$ in $\AutG(\nprod)$.

%\[\begin{tikzcd}
%X_{1}\times....\times X_{n} \arrow{r}{\phi} \arrow[swap]{d}{f} & X_{1}\times....\times X_{n}\times X_{n-1} \arrow{d}{\phi %\circ f\circ \phi^{-1}} \\
%X_{1}\times....\times X_{n} \arrow{r}{\phi} & X_{1}\times....\times X_{n}\times X_{n-1}
%\end{tikzcd}
%\]		
\end{proof}
	
We now deduce that $\AutG_{\prod_{\hat{i}}}(\nprod)$ are the subgroups of $\AutG(\nprod)$, for all $i = 1,\ldots,n$.

\begin{corollary}
	For a pointed $G$-CW complex $\nprod$, the subsets $\AutG_{\prod_{\hat{i}}}(\nprod)$ are subgroups of $\Aut^G(\bX)$, for any $i=1,\ldots,n$.
\end{corollary}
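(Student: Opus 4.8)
The plan is to reduce the statement to the two-factor case that was already handled in Proposition~\ref{egred}(c), using the permutation homeomorphism trick of Lemma~\ref{lreplace}. First I would fix an index $i$ and note that by Lemma~\ref{lreplace} every element of $\AutG_{\prod_{\hat i}}(\nprod)$ has the form $f = (p_1,\ldots,p_{i-1},f_i,p_{i+1},\ldots,p_n)$ with $f_{ii}\in\AutG(X_i)$, and moreover its $G$-homotopy inverse has the same shape. So the subset $\AutG_{\prod_{\hat i}}(\nprod)$ is precisely the image, under conjugation by the permutation homeomorphism $\phi\colon\nprod\to\bar{\bf X}=\bar{\b X}\times X_i$ of Lemma~\ref{lreplace}, of the subset $\AutG_{\bar{\b X}}(\bar{\b X}\times X_i)\subset\AutG(\bar{\bf X})$.

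Next I would invoke Proposition~\ref{egred}(c), applied with $X$ replaced by $\bar{\b X}$ and $Y$ replaced by $X_i$: this tells us that $\AutG_{\bar{\b X}}(\bar{\b X}\times X_i)$ is a subgroup of $\AutG(\bar{\bf X})$, consisting exactly of maps $(p'_{\bar{\b X}},g)$ with $g\circ\iota_{X_i}\in\AutG(X_i)$. Finally, conjugation by the homeomorphism $\phi$ induces a group isomorphism $\AutG(\bar{\bf X})\xrightarrow{\ \cong\ }\AutG(\nprod)$, and isomorphisms carry subgroups to subgroups; hence $\AutG_{\prod_{\hat i}}(\nprod)=\phi^{-1}\,\AutG_{\bar{\b X}}(\bar{\b X}\times X_i)\,\phi$ is a subgroup of $\AutG(\nprod)$. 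Since $i$ was arbitrary, this proves the corollary.

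The only points that need care — and the place I expect the mild friction to be — are the bookkeeping identifications: that $\phi\circ(-)\circ\phi^{-1}$ really does carry $\AutG_{\prod_{\hat i}}(\nprod)$ onto $\AutG_{\bar{\b X}}(\bar{\bf X})$ (one must check both inclusions, i.e.\ that a map over $\prod_{\hat i}$ becomes a map over $\bar{\b X}$ after conjugation and conversely), and that conjugation by a homeomorphism is a group isomorphism on $\AutG(-)$, which is routine since $\phi$ is $G$-equivariant and a homeomorphism so it preserves the group structure (composition) and sends the identity to the identity. Both of these are essentially immediate from the diagram already displayed in the proof of Lemma~\ref{lreplace}, so no new idea is required; the corollary is genuinely a formal consequence of that lemma together with Proposition~\ref{egred}(c).

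Alternatively — and perhaps more cleanly for the write-up — one can avoid the conjugation language entirely and argue directly: $\AutG_{\prod_{\hat i}}(\nprod)$ is nonempty (it contains the identity), and given two elements $f=(p_1,\ldots,f_i,\ldots,p_n)$ and $f'=(p_1,\ldots,f'_i,\ldots,p_n)$ of it, their composite is $(p_1,\ldots,f_i\circ f',\ldots,p_n)$, which again fixes every coordinate except the $i$-th and hence lies over $\prod_{\hat i}$; and by Lemma~\ref{lreplace} the inverse of $f$ is $(p_1,\ldots,\widetilde f_i,\ldots,p_n)$, which also lies over $\prod_{\hat i}$. Thus $\AutG_{\prod_{\hat i}}(\nprod)$ is closed under composition and inverses, so it is a subgroup. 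I would present this second argument, since it is self-contained and short, relegating the permutation homeomorphism only to where Lemma~\ref{lreplace} already uses it.
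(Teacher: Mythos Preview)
Your proposal is correct, and your second (direct) argument --- nonemptiness via the identity, closure under composition by observing that the composite has $p_j$ in every slot $j\neq i$, and closure under inverses via Lemma~\ref{lreplace} --- is exactly the paper's proof. The paper additionally records that $(f_i\circ f')\circ\iota_i = f_{ii}\circ f'_{ii}\in\AutG(X_i)$, but this is just confirming the characterization of Lemma~\ref{lreplace} and is not needed for the subgroup claim itself.
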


\begin{proof}
Any element of $\AutG_{\prod_{\hat{i}}}(\nprod)$ is of the form $(p_1, \ldots, f_i, \ldots, p_n),$ where $f_{i}\circ \iota_{i}\in \AutG(X_{i})$. This set is non-empty because $(p_{1},\ldots,p_{n})\in \AutG_{\prod_{\hat{i}}}(\nprod)$. Let for any two elements $(p_1, \ldots, f_i, \ldots, p_n), ~~~(p_1, \ldots, g_i, \ldots, p_n)\in \AutG_{\prod_{\hat{i}}}(\nprod)$, we have
$$(p_1, \ldots, f_i, \ldots, p_n)\circ (p_1, \ldots, g_i, \ldots, p_n) = (p_1, \ldots, f_i\circ (p_1, \ldots, g_i, \ldots, p_n),\ldots, p_n).$$ Clearly, $$f_{i}\circ (p_1, \ldots, g_i, \ldots, p_n)\circ \iota_{i} = f_{ii}\circ g_{ii}\in \AutG(X_{i}).$$ Therefore $(p_1, \ldots, f_i, \ldots, p_n)\circ (p_1, \ldots, g_i, \ldots, p_n)\in \AutG_{\prod_{\hat{i}}}(\nprod)$.
From Lemma \ref{lreplace}, we have, $(p_1, \ldots, f_i, \ldots, p_n)^{-1} = (p_1, \ldots, \widetilde{f}_i, \ldots, p_n)$, which is in $\AutG_{\prod_{\hat{i}}}(\nprod)$. Consequently $\AutG_{\prod_{\hat{i}}}(\nprod)$ becomes a subgroup of $\AutG(\nprod)$, for any $i=1,\ldots,n$.
	
\end{proof}

The following theorem and its corollary give a sufficient condition for an equivariant map $f\colon \nprod\to \nprod$ to be a $G$-self-homotopy equivalence under the $G$-reducibility assumption.
	
\begin{theorem}\label{tgred}
Assume that all $G$-self-homotopy equivalences of $\nprod$ are $G$-reducible (where each $X_{i}$ are G-CW complexes). Given maps $f_{i}\in \map^{G}(\nprod,  X_{i})$ for $k\leq i\leq n$, we have $(p_{1},\ldots, p_{k-1}, f_{k},\ldots, f_{n})\in \AutG(\nprod)$  if and only if $f_{ii}\in \AutG(X_{i})$ for all $k\leq i \leq n$.
\end{theorem}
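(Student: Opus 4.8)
The plan is to prove Theorem \ref{tgred} by downward induction on $k$, peeling off one factor at a time and reducing to the two-variable situation already handled in Section \ref{sproducttwo}. The forward implication is immediate: if $f = (p_1,\ldots,p_{k-1},f_k,\ldots,f_n) \in \AutG(\nprod)$ then $f$ is $G$-reducible by hypothesis, so $f_{ii} \in \AutG(X_i)$ for all $i$, and in particular for $k \leq i \leq n$. For the converse, I would first dispose of the base case $k = n$, which is exactly Lemma \ref{lreplace}: a map of the form $(p_1,\ldots,p_{n-1},f_n)$ lies in $\AutG(\nprod)$ precisely when $f_{nn} \in \AutG(X_n)$.

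For the inductive step, suppose the statement holds for $k+1$ and we are given $f_i \in \map^G(\nprod, X_i)$ for $k \leq i \leq n$ with $f_{ii} \in \AutG(X_i)$ for all such $i$. Write $f = (p_1,\ldots,p_{k-1},f_k,f_{k+1},\ldots,f_n)$. The idea is to group $\nprod$ as $A \times X_k$, where $A = X_1 \times \cdots \times X_{k-1} \times X_{k+1} \times \cdots \times X_n$ (up to the obvious permutation homeomorphism, as in Lemma \ref{lreplace}), and to factor $f$ as a composite of a map that is the identity on the $A$-coordinate composed with a map that is "the identity on $X_k$". Concretely, I would mimic the factorisation used in the proof of Theorem \ref{Mgred} and Proposition \ref{mgred}: write
$$f = \big[p_1,\ldots,p_{k-1}, g_k, f_{k+1}\circ\sigma,\ldots,f_n\circ\sigma\big]\circ\big(p_1,\ldots,p_{k-1},f_k,p_{k+1},\ldots,p_n\big),$$
where the second factor $h := (p_1,\ldots,p_{k-1},f_k,p_{k+1},\ldots,p_n)$ lies in $\AutG(\nprod)$ by Lemma \ref{lreplace} (since $f_{kk}\in\AutG(X_k)$), with $G$-homotopy inverse $\sigma = (p_1,\ldots,p_{k-1},\widetilde{f}_k,p_{k+1},\ldots,p_n)$, and the first factor is $f\circ\sigma$, which is automatically the identity on the $X_k$-coordinate by construction (that coordinate becomes $f_k\circ\sigma = p_k$). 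It remains to show the first factor $f \circ \sigma$ lies in $\AutG(\nprod)$. But $f \circ \sigma$ is of the form $(p_1,\ldots,p_{k-1},p_k, \ast,\ldots,\ast)$ with the last $n-k$ slots being $G$-maps $\nprod\to X_i$; regrouping $X_k$ into the "identity block", this is exactly a map of the form covered by the inductive hypothesis at level $k+1$, provided we check its $(i,i)$-components for $k+1\leq i\leq n$ are still $G$-self-equivalences. That check is where one must be a little careful: $(f\circ\sigma)_{ii} = f_{ii}\circ(\text{something})$, and since $\sigma$ fixes all coordinates except $X_k$, restricting to $X_i$ ($i\neq k$) and projecting to $X_i$ recovers $f_{ii}$ up to composition with maps that are easily seen to be identities on $\pi_*(X_i^H)$; alternatively, invoke $G$-reducibility applied to the already-known-invertible composite to conclude $(f\circ\sigma)_{ii}\in\AutG(X_i)$. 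Then the inductive hypothesis gives $f\circ\sigma\in\AutG(\nprod)$, hence $f = (f\circ\sigma)\circ h \in \AutG(\nprod)$.

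I expect the main obstacle to be the bookkeeping around the permutation homeomorphisms and verifying that the "diagonal components" $(f\circ\sigma)_{ii}$ for $i > k$ are unchanged (or at least remain $G$-self-equivalences) after composing with $\sigma$ — this is conceptually trivial since $\sigma$ only alters the $X_k$-coordinate, but writing it cleanly for an $n$-fold product requires care with the indices, exactly as in the proof of Lemma \ref{lreplace}. A cleaner alternative I would consider is to avoid the explicit factorisation altogether: observe that $f = (f\circ\sigma)\circ h$ with $h\in\AutG(\nprod)$ known, so $f\in\AutG(\nprod)$ if and only if $f\circ\sigma\in\AutG(\nprod)$; now $f\circ\sigma$ has $p_k$ in its $k$-th slot, so it "lives over $X_k$" and we may apply the $(n-1)$-variable instance of the theorem to the product $\prod_{\hat k}$ after identifying $\nprod = \prod_{\hat k}\times X_k$ and using that a map over $X_k$ of this shape is invertible iff the corresponding map of $\prod_{\hat k}$ is — reducing $n$ by one and $k$ by one simultaneously, which is arguably the more natural induction and sidesteps having to re-derive the diagonal-component identities by hand.
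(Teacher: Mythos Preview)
Your overall architecture --- downward induction on $k$, base case $k=n$ via Lemma~\ref{lreplace}, and the factorisation $f=(f\circ\sigma)\circ h$ with $h=(p_1,\ldots,p_{k-1},f_k,p_{k+1},\ldots,p_n)$ and $\sigma=h^{-1}$ --- is exactly the paper's. The gap is in the one step you flag as needing care: verifying that $(f\circ\sigma)_{jj}=f_j\circ\sigma\circ\iota_j\in\AutG(X_j)$ for $j>k$. Your direct claim that this ``recovers $f_{jj}$ up to composition with maps that are easily seen to be identities on $\pi_\ast(X_i^H)$'' is not correct. For $j\neq k$ the map $\sigma\circ\iota_j\colon X_j\to\nprod$ is \emph{not} $\iota_j$: its $k$-th coordinate is $\widetilde f_k\circ\iota_j$, which is typically nontrivial, so $f_j\circ\sigma\circ\iota_j$ need not agree with $f_{jj}$ even on homotopy groups. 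Your fallback ``invoke $G$-reducibility applied to the already-known-invertible composite'' is too vague as stated: $f\circ\sigma$ is precisely the map whose invertibility is in question, so $G$-reducibility of \emph{that} composite is unavailable.

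The paper's resolution is to manufacture a \emph{different} composite that is already known to be invertible and has the same $j$-th component as $f\circ\sigma$ for $j>k$. Namely, the inductive hypothesis at level $k+1$ applied to the \emph{original} maps $f_{k+1},\ldots,f_n$ gives $(p_1,\ldots,p_k,f_{k+1},\ldots,f_n)\in\AutG(\nprod)$ directly. Composing this with $\sigma\in\AutG(\nprod)$ yields
\[
(p_1,\ldots,p_{k-1},\widetilde f_k,\,f_{k+1}\circ\sigma,\ldots,f_n\circ\sigma)\in\AutG(\nprod),
\]
and now $G$-reducibility of \emph{this} element forces $f_j\circ\sigma\circ\iota_j\in\AutG(X_j)$ for every $j>k$, which is exactly what you need to feed back into the inductive hypothesis for $f\circ\sigma$. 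Your ``cleaner alternative'' does not sidestep the issue: it still requires the same diagonal check, and moreover it assumes $G$-reducibility of the subproduct $\prod_{\hat k}$, which in the paper is Proposition~\ref{Rtgred} and is proved \emph{using} Theorem~\ref{tgred} --- so invoking it here would be circular.
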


\begin{proof}
Let $(p_{1},\ldots,p_{k-1},f_{k},\ldots,f_{n})\in \AutG(\nprod)$, therefore by using $G$-reducibility we have $f_{ii}\in \AutG(X_{i})$.

Conversely, $k =n $ follows from the above Lemma \ref{lreplace}. Now we consider the case $k = n-1.$ Observe that, 
$$(p_{1},\ldots,p_{n-2},f_{n-1},f_{n})=\Big[p_{1},\ldots,p_{n-1},f_{n}\circ (p_{1},\ldots,f_{n-1},p_{n})^{-1}\Big] \circ (p_{1},\ldots,f_{n-1},p_{n}).$$
\noindent By Lemma \ref{lreplace}, the inverse $(p_{1},\ldots,f_{n-1},p_{n})^{-1}$ is of the form $ (p_{1},\ldots,\widetilde{f}_{n-1},p_{n}) \in \Aut^G(\nprod).$ Since $f_{nn}\in \Aut^G(X_n)$, we get by using Lemma \ref{lreplace} again, 
\begin{align*}
(p_{1},\ldots,p_{n-1},f_{n})\circ (p_{1},\ldots,\widetilde{f}_{n-1},p_{n}) =\Big[p_{1},\ldots,\widetilde{f}_{n-1},f_{n}\circ &(p_{1},....,\widetilde{f}_{n-1},p_{n})\Big]
\in \AutG(\nprod).
\end{align*}
Using $G$-reducibility we have $f_{n}\circ (p_{1},\ldots,\widetilde{f}_{n-1},p_{n})\circ \iota_{n}\in \AutG(X_{n})$. Therefore $\Big[p_{1},\ldots,p_{n-1},f_{n}\circ (p_{1},\ldots,\widetilde{f}_{n-1},p_{n})\Big]\in \AutG(\nprod)$.
Consequently,
$$(p_{1},\ldots,p_{n-2},f_{n-1},f_{n})=\Big[p_{1},\ldots,p_{n-1},f_{n}\circ (p_{1},\ldots,\widetilde{f}_{n-1},p_{n})\Big]\circ (p_{1},\ldots,f_{n-1},p_{n}),$$
is in $ \AutG(\nprod)$ whenever $f_{n-1n-1}\in \AutG(X_{n-1})$ and $f_{nn}\in \AutG(X_{n})$. This completes the proof for $k = n-1.$

\noindent Now consider the case $k = n-2$. We have factorisation, 
\begin{align*}
& (p_{1},\ldots,p_{n-3},f_{n-2},f_{n-1},f_{n}) = \\
& \Big[p_{1},\ldots,p_{n-2},f_{n-1}\circ (p_{1},\ldots,\widetilde{f}_{n-2},p_{n-1},p_{n}),
f_{n}\circ (p_{1},\ldots,\widetilde{f}_{n-2},p_{n-1},p_{n}) \Big] \circ \\
& (p_{1},\ldots,f_{n-2},p_{n-1},p_{n}).
\end{align*}
Clearly, the second factor is in $\Aut^G(\nprod)$ by Lemma \ref{lreplace}. To show the first factor is in $\Aut^G(\nprod)$, it is enough to show that (by the previous step $k = n-1$),
$$f_{n-1}\circ (p_{1},\ldots,\widetilde{f}_{n-2},p_{n-1},p_{n})\circ \iota_{n-1}\in \AutG(X_{n-1}),$$ and $$f_{n}\circ (p_{1},\ldots,\widetilde{f}_{n-2},p_{n-1},p_{n})\circ \iota_{n}\in \AutG(X_{n}).$$
Since,
\begin{align*}
& (p_{1},\ldots,p_{n-2},f_{n-1},f_{n})\circ (p_{1},\ldots,p_{n-3},\widetilde{f}_{n-2},p_{n-1},p_{n}) = \\
&\Big[p_{1},\ldots,\widetilde{f}_{n-2},f_{n-1}\circ (p_1,\ldots,\widetilde{f}_{n-2},p_{n-1},p_{n}), f_{n}\circ (p_{1},\ldots,\widetilde{f}_{n-2},p_{n-1},p_{n})\Big] \in \AutG(\nprod).
\end{align*}
Using $G$-reducibility we conclude that,
$$f_{n-1}\circ (p_{1},\ldots,p_{n-3},\widetilde{f}_{n-2},p_{n-1},p_{n})\circ \iota_{n-1}\in \AutG(X_{n-1}),$$ 
and 
$$f_{n}\circ (p_{1},\ldots,p_{n-3},\widetilde{f}_{n-2},p_{n-1},p_{n})\circ \iota_{n}\in \AutG(X_{n}).$$
%
%\noindent Therefore,
%\begin{align*}
%\Big[p_{1},\ldots,p_{n-2},f_{n-1}\circ (p_{1},\ldots,\widetilde{f}_{n-2},p_{n-1},p_{n}),
%f_{n}\circ &(p_{1},\ldots,\widetilde{f}_{n-2},p_{n-1},p_{n})\Big] 
 %\in \AutG(\nprod).
%\end{align*}
%Consequently,
%
%\begin{align*}
%(p_{1},\ldots,p_{n-3},f_{n-2},f_{n-1},f_{n}) & =\Big[p_{1},\ldots,p_{n-2},f_{n-1}\circ (p_{1},\ldots,\widetilde{f}_{n-2},p_{n-1},p_{n}),\\
%&f_{n}\circ (p_{1},\ldots,\widetilde{f}_{n-2},p_{n-1},p_{n})\Big]\circ (p_{1},\ldots,f_{n-2},p_{n-1},p_{n})\\
%&~~~\in \AutG(\nprod).
%\end{align*}
%Whenever $f_{ii}\in \AutG(X_{i}),~~~~\forall i=n-2,n-1,n$.
%
Therefore, $(p_{1},\ldots,p_{n-3},f_{n-2},f_{n-1},f_{n})\in \AutG(\nprod)$, whenever $f_{ii}\in \AutG(X_{i}),~~~~\forall i=n-2,n-1,n$.

\noindent Inductively, we can prove that $(p_{1},\ldots,p_{k-1},f_{k},\ldots,f_{n})\in \AutG(\nprod)$, whenever $f_{ii}\in \AutG(X_{i}), ~~k\leq i \leq n$.

\end{proof}

\begin{corollary}\label{Sher}
Assume that all $G$-self-homotopy equivalences of $\nprod$ are $G$-reducible. Then an equivariant map $f = (f_{1},\ldots,f_{n})\colon \nprod \to \nprod$ is $G$-self homotopy equivalence if $f_{ii}\in \AutG(X_{i})$ for all $1\leq i\leq n$.
\end{corollary}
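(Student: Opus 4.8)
The plan is to reduce the general claim $f=(f_1,\ldots,f_n)\in\AutG(\nprod)$ to the special form already handled by Theorem \ref{tgred}, namely maps of the shape $(p_1,\ldots,p_{k-1},f_k,\ldots,f_n)$. The natural way to do this is to peel off one factor at a time from the left, exactly as in the two-factor case (Theorem \ref{Mgred}): write $f$ as a composite of an element already known to be a $G$-equivalence and a map with one more projection in front. Concretely, first I would consider $(f_1,p_2,\ldots,p_n)$. By Lemma \ref{lreplace}, since $f_{11}=f_1\circ\iota_1\in\AutG(X_1)$, this map lies in $\AutG(\nprod)$ and has inverse of the form $(\widetilde f_1,p_2,\ldots,p_n)$. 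Then one factors $f=\bigl[p_1,f_2',\ldots,f_n'\bigr]\circ(f_1,p_2,\ldots,p_n)$, where $f_i':=f_i\circ(\widetilde f_1,p_2,\ldots,p_n)$ for $i\geq 2$.

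The second step is to check that the left factor $\bigl[p_1,f_2',\ldots,f_n'\bigr]$ is a $G$-self-equivalence; by Theorem \ref{tgred} (applied with $k=2$) it suffices to verify $f_{ii}'\in\AutG(X_i)$ for $2\leq i\leq n$. For this I would compose $f$ on the right with the known equivalence $(\widetilde f_1,p_2,\ldots,p_n)$: the composite $f\circ(\widetilde f_1,p_2,\ldots,p_n)$ is a $G$-self-equivalence of $\nprod$, hence $G$-reducible by hypothesis, so each of its diagonal components is in $\AutG(X_i)$. A direct computation shows that for $i\geq 2$ the $i$-th diagonal component of this composite is precisely $f_i'\circ\iota_i=f_{ii}'$ (because the $i$-th coordinate of $(\widetilde f_1,p_2,\ldots,p_n)$ is $p_i$, and $p_i\circ\iota_i=\Id$ while the other coordinates send $\iota_i$ to the base point). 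This gives $f_{ii}'\in\AutG(X_i)$ for $i\geq 2$, so Theorem \ref{tgred} applies and the left factor is in $\AutG(\nprod)$; consequently $f\in\AutG(\nprod)$.

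One could alternatively bypass the explicit peeling and simply invoke Theorem \ref{tgred} after a single reduction, or even induct on $n$ using Proposition \ref{mgred} as the base case, viewing $\nprod=X_1\times(X_2\times\cdots\times X_n)$ and checking that all $G$-self-equivalences of the two-fold product inherit $G$-reducibility from those of the $n$-fold product. Either route is essentially bookkeeping once Theorem \ref{tgred} is in hand.

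The main obstacle is purely notational rather than conceptual: one must be careful that the $G$-reducibility hypothesis, which is assumed for $\nprod$, is legitimately being used on the composite $f\circ(\widetilde f_1,p_2,\ldots,p_n)$ — and it is, since this composite is genuinely a $G$-self-homotopy equivalence of $\nprod$, not of some sub- or quotient product — and that the identity $f_{ii}'=\bigl(f\circ(\widetilde f_1,p_2,\ldots,p_n)\bigr)_{ii}$ is tracked correctly through the coordinates. Everything else is a direct consequence of Theorem \ref{tgred}.
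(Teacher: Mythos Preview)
Your overall strategy matches the paper's: factor $f=[p_1,f_2',\ldots,f_n']\circ(f_1,p_2,\ldots,p_n)$ with $f_i'=f_i\circ(\widetilde f_1,p_2,\ldots,p_n)$, and show both factors lie in $\AutG(\nprod)$ using Lemma~\ref{lreplace} and Theorem~\ref{tgred}. However, your argument for $f_{ii}'\in\AutG(X_i)$ is circular. You write that $f\circ(\widetilde f_1,p_2,\ldots,p_n)$ ``is a $G$-self-equivalence of $\nprod$, hence $G$-reducible by hypothesis'' --- but at this point $f$ is not yet known to be a $G$-self-equivalence (that is exactly what you are trying to prove), so you cannot conclude the composite is one, and the $G$-reducibility hypothesis does not apply to it.

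The paper closes this gap by replacing $f$ with $(p_1,f_2,\ldots,f_n)$ in that composite. Since $f_{ii}\in\AutG(X_i)$ for $i\geq 2$ by hypothesis, Theorem~\ref{tgred} gives $(p_1,f_2,\ldots,f_n)\in\AutG(\nprod)$ directly. Then
\[
(p_1,f_2,\ldots,f_n)\circ(\widetilde f_1,p_2,\ldots,p_n)=\bigl[\widetilde f_1,\,f_2',\ldots,f_n'\bigr]
\]
is a composite of two known $G$-equivalences, hence lies in $\AutG(\nprod)$, and now $G$-reducibility legitimately yields $f_j'\circ\iota_j\in\AutG(X_j)$ for $j\geq 2$. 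Note the components in slots $2,\ldots,n$ of this composite are exactly the same $f_j'$ as in your composite $f\circ(\widetilde f_1,p_2,\ldots,p_n)$; only the first slot differs. So the fix is small but essential: you must first invoke Theorem~\ref{tgred} on $(p_1,f_2,\ldots,f_n)$ before forming the composite whose reducibility you exploit.
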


\begin{proof}
%Let $f = (f_{1},\ldots,f_{n})\in \AutG(\nprod)$. By using $G$-reducibility we have $f_{ii}\in \AutG(X_{i})$ for all $1\leq i\leq n$. Conversely,
Let $f = (f_{1},\ldots,f_{n})\colon \nprod\to \nprod$ be an equivariant map such that $f_{ii}\in \AutG(X_{i})$ for all $1\leq i\leq n.$ Therefore from Theorem \ref{tgred} we have $(p_{1},f_{2},\ldots,f_{n})\in \AutG(\nprod)$.
\noindent Now,
$$(f_{1},\ldots,f_{n})=\Big[p_{1},f_{2}\circ (\widetilde{f}_{1},p_{2},\ldots,p_{n}),\ldots,f_{n}\circ (\widetilde{f}_{1},p_{2},\ldots,p_{n})\Big]
\circ (f_{1},p_{2},\ldots,p_{n}).$$

\noindent By Lemma  \ref{lreplace}, we have $(f_{1},p_{2},\ldots,p_{n})\in \AutG(\nprod)$ with its inverse $(\widetilde{f}_{1},p_{2},\ldots,p_{n})$.
Since,
$$(p_{1},f_{2},\ldots,f_{n})\circ (\widetilde{f}_{1},p_{2},\ldots,p_{n})\\
=\Big[\widetilde{f}_{1},f_{2}\circ (\widetilde{f}_{1},p_{2},\ldots,p_{n}),\ldots,f_{n}\circ (\widetilde{f}_{1},p_{2},\ldots,p_{n})\Big]$$
in $\AutG(\nprod)$, using $G$-reducibility we can say that
$$f_{j}\circ (\widetilde{f}_{1},p_{2},\ldots,p_{n})\circ \iota_{j}\in \AutG(X_{j}), ~~~\forall ~~~ 2\leq j \leq n.$$
Therefore from Theorem \ref{tgred} we have, $$\Big[p_{1},f_{2}\circ (\widetilde{f}_{1},p_{2},\ldots,p_{n}),\ldots,f_{n}\circ (\widetilde{f}_{1},p_{2},\ldots,p_{n})\Big]\in \AutG(\nprod).$$
Consequently,  $$(f_{1},\ldots,f_{n})=\Big[p_{1},f_{2}\circ (\widetilde{f}_{1},p_{2},\ldots,p_{n}),\ldots,f_{n}\circ (\widetilde{f}_{1},p_{2},\ldots,p_{n})\Big]\circ (f_{1},p_{2},\ldots,p_{n})$$ in $\AutG(\nprod)$.

\end{proof}

\begin{proposition}\label{Rtgred}
	
Assume that all $G$-self homotopy equivalences of $\nprod$ are $G$-reducible. 
%Let $W=X_1\times\cdots\times X_k$ and $Z=X_{k+1}\times \cdots \times X_n.$ Then the product $W\times Z$ is $G$-reducible (as two factors $W$ and $Z$). 
%
Let $W=X_1\times\cdots\times X_k$ and $Z=X_{k+1}\times \cdots \times X_n.$ If $f = (f_W, f_Z) \in \Aut^G(\bX)$ then $f_{WW} \in\Aut^G(W)$, $f_{ZZ} \in\Aut^G(Z)$. 
In particular the product spaces $W, Z$ are also $G$-reducible. 
\end{proposition}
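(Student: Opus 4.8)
The plan is to identify $\bX$ with the two-fold product $W\times Z$ (the diagonal $G$-action on $\bX$ restricts to the diagonal actions on $W$ and on $Z$), to write $f=(f_1,\dots,f_n)$ with $f_i=p_i\circ f\colon\bX\to X_i$, and then to build from the $f_i$ two auxiliary equivariant self-maps of $\bX$ that are visibly self-maps ``over $Z$'' and ``over $W$'' respectively. The only input we need about $f$ beyond its being a $G$-equivalence is that, since every $G$-self-homotopy equivalence of $\bX$ is $G$-reducible, we have $f_{ii}\in\AutG(X_i)$ for all $1\le i\le n$.

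First I would handle $f_{WW}$. Set $h:=(f_1,\dots,f_k,p_{k+1},\dots,p_n)\colon\bX\to\bX$, which is an equivariant map. Its diagonal components are $h_{ii}=f_i\circ\iota_i=f_{ii}\in\AutG(X_i)$ for $i\le k$ and $h_{ii}=p_i\circ\iota_i=\Id_{X_i}$ for $i>k$, so all of them lie in $\AutG(X_i)$; by Corollary \ref{Sher} this gives $h\in\AutG(\bX)$. Viewing $h$ as a self-map of $W\times Z$, its first block of coordinates assembles to $f_W:=p_W\circ f=(f_1,\dots,f_k)\colon W\times Z\to W$ and its last block is precisely $p_Z$, so $h=(f_W,p_Z)$. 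Since $p_W\circ\iota_W=\Id_W$ we have $f_W\circ\iota_W=p_W\circ f\circ\iota_W=f_{WW}$, and Proposition \ref{egred}(b) (with the roles of the two factors interchanged) now shows that $h=(f_W,p_Z)\in\AutG(W\times Z)$ forces $f_{WW}=f_W\circ\iota_W\in\AutG(W)$. Symmetrically, $h'':=(p_1,\dots,p_k,f_{k+1},\dots,f_n)$ has all diagonal components in $\AutG(X_i)$, so $h''\in\AutG(\bX)$ by Corollary \ref{Sher}; writing $h''=(p_W,f_Z)$ on $W\times Z$ and applying Proposition \ref{egred}(b) directly yields $f_{ZZ}=f_Z\circ\iota_Z\in\AutG(Z)$.

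For the last assertion, the two conclusions just obtained say exactly that the two-fold product $\bX=W\times Z$ is $G$-reducible. To see in addition that each of $W=X_1\times\dots\times X_k$ and $Z=X_{k+1}\times\dots\times X_n$, regarded as a product in its own right, is $G$-reducible, take $g\in\AutG(W)$ and note that $g\times\Id_Z=(g\circ p_W,p_Z)$ lies in $\AutG(\bX)$ (on each fixed set it is $g^H\times\Id$); by $G$-reducibility of $\bX$ its diagonal components lie in $\AutG(X_i)$, and for $i\le k$ that component equals $g_{ii}$, so $g$ is $G$-reducible, and the argument for $Z$ is identical. I expect the only real care needed throughout to be bookkeeping with the block projections and inclusions --- verifying $f_{WW}=f_W\circ\iota_W$, $h_{ii}=f_{ii}$, $p_W\circ\iota_W=\Id_W$ and their $Z$-analogues, and matching the order of the factors when invoking Proposition \ref{egred}(b) --- since there is no homotopy-theoretic content beyond Corollary \ref{Sher} and Proposition \ref{egred}.
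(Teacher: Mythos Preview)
Your argument is correct and follows the same overall strategy as the paper: use $G$-reducibility of $\bX$ to get $f_{ii}\in\AutG(X_i)$, build the auxiliary map $(p_W,f_Z)$ (respectively $(f_W,p_Z)$), verify it lies in $\AutG(\bX)$, and read off that $f_{ZZ}$ (respectively $f_{WW}$) is an equivalence. The differences are in execution. First, where the paper invokes Theorem~\ref{tgred} to see $(p_W,f_Z)\in\AutG(\bX)$ and then runs a Five-Lemma argument on the fiber sequence $Z^H\to(W\times Z)^H\to W^H$ to conclude $f_{ZZ}\in\AutG(Z)$, you instead appeal directly to Proposition~\ref{egred}(b), which already packages that Five-Lemma step; this is shorter. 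Second, the paper treats only $f_{ZZ}$ and then derives $f_{WW}$ via Proposition~\ref{pgred}(a), whereas you handle the two factors symmetrically by also forming $(f_W,p_Z)$ via Corollary~\ref{Sher}. Finally, you go further than the paper on the ``in particular'' clause: the paper only records that $W\times Z$ is $G$-reducible, while you additionally show that $W$ and $Z$, viewed as products in their own right, are $G$-reducible by embedding $g\in\AutG(W)$ as $g\times\Id_Z\in\AutG(\bX)$ --- a correct and useful extra observation.
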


\begin{proof}
Let $f\in \Aut^{G}(\nprod)$.  Therefore $f=(f_W,f_Z)\in \Aut^{G}(W\times Z).$ From Proposition \ref{pgred}(a), it is sufficient to show that $f_{ZZ}\in \Aut^{G}(Z)$.

Since $f\in \Aut^{G}(\nprod)$, Theorem \ref{tgred} gives $(p_W,f_Z)\in \Aut^{G}(W\times Z).$  Consider the fiber sequence,

$$
\xymatrix{
Z^{H} \ar[dd]_{f^{H}_{ZZ}}  \ar[rr]^{(\iota_Z)^{H}}  &&  (W\times Z)^{H} \ar[dd]^{(p_W,f_Z)^{H}} \ar[rr]^{(p_W)^{H}} && W^{H}\ar[dd]^{Id_W} \\\\
Z^{H} \ar[rr]_{(\iota_Z)^{H}} && (W\times Z)^{H} \ar[rr]^{(p_W)^{H}} && W^{H}
}
$$
Using Five-Lemma corresponding to the long exact sequence of homotopy groups of the above fiber sequence, we can say that $f^{H}_{ZZ}\in \Aut(Z^{H})$, for all $H\leq G$ i.e. $f_{ZZ}\in \Aut^{G}(Z).$

%Similarly, consider the commutative diagram

%$$
%\xymatrix{
%W \ar[dd]_{f_{WW}}  \ar[rr]^{\iota_W}  &&  W\times Z \ar[dd]^{(f_W,p_Z)} %\ar[rr]^{p_Z} && Z\ar[dd]^{Id_Z} \\\\
%W \ar[rr]_{\iota_W} && W\times Z \ar[rr]^{p_Z} && Z
%}
%$$
%Using Five-Lemma corresponding to the long exact sequence of homotopy groups of the above fiber sequence, we can say that $f_{WW}\in \Aut^{G}(W).$
Hence all $G$-self homotopy equivalences of $W\times Z$ are $G$-reducible.	
\end{proof}

We now show that the group $\AutG(\nprod)$ is the product of its subgroups $\Aut_{\prod_{\hat{i}}}^{G}(\nprod)$, where $i = 1,\ldots,n$.

\begin{theorem}\label{tnprod}
Assume that all $G$-self-homotopy equivalences of $\nprod$ are $G$-reducible. Then,  $$\AutG(\nprod)=\Aut_{\prod_{\hat{n}}}^{G}(\nprod)\ldots \Aut_{\prod_{\hat{1}}}^{G}(\nprod).$$
Moreover, if each $X_i$ are connected then $\Aut_{\prod_{\hat{i}}}^{G}(\nprod)$ fits into a split exact sequence 
$$0\to [\Pi_{\hat{i}},\aut_{1}(X_i)]_{G} \to \Aut_{\prod_{\hat{i}}}^{G}(\nprod)\xrightarrow[]{\phi_{i}} \AutG(X_{i}) \to 0,$$ defined by $$\phi_{i}\big(p_{1},\ldots,f_{i},\ldots,p_{n}\big) =  f_{i}\circ \iota_{i},~~~\forall i=1,\ldots,n$$
\end{theorem}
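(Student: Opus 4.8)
The plan is to establish the two assertions separately: the product decomposition by a direct ``peeling'' induction that uses only Lemma \ref{lreplace} and $G$-reducibility, and the split exact sequence by transporting Proposition \ref{egred}(d) through a permutation of the factors.

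For the decomposition, start with $f=(f_1,\ldots,f_n)\in\AutG(\nprod)$ and set $g^{(0)}:=f$; $G$-reducibility gives $f_{ii}\in\AutG(X_i)$ for all $i$. I then peel off one factor at a time from the right. Suppose that for some $1\leq k\leq n-1$ I have produced $g^{(k-1)}=(p_1,\ldots,p_{k-1},h_k,h_{k+1},\ldots,h_n)\in\AutG(\nprod)$ with each $h_j\colon\nprod\to X_j$ equivariant (for $k=1$ this is just $f$). Put $a_k:=(p_1,\ldots,p_{k-1},h_k,p_{k+1},\ldots,p_n)$; since $g^{(k-1)}$ is $G$-reducible, $(h_k)_{kk}=(g^{(k-1)})_{kk}\in\AutG(X_k)$, so by Lemma \ref{lreplace} we get $a_k\in\AutG_{\prod_{\hat k}}(\nprod)$ with inverse $a_k^{-1}=(p_1,\ldots,p_{k-1},\widetilde h_k,p_{k+1},\ldots,p_n)$. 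Define $g^{(k)}:=g^{(k-1)}\circ a_k^{-1}$; a coordinatewise check shows $(g^{(k)})_j=p_j\circ a_k^{-1}=p_j$ for $j<k$, $(g^{(k)})_k=h_k\circ a_k^{-1}=p_k\circ a_k\circ a_k^{-1}=p_k$ (because the $k$-th coordinate of $a_k$ is $h_k$), and $(g^{(k)})_j=h_j\circ a_k^{-1}$ for $j>k$. Hence $g^{(k)}=(p_1,\ldots,p_k,\ast,\ldots,\ast)\in\AutG(\nprod)$, which is again $G$-reducible, so the induction continues for $k=1,\ldots,n-1$. At the end $g^{(n-1)}=(p_1,\ldots,p_{n-1},\ast)\in\AutG(\nprod)$ is $G$-reducible, so its $n$-th diagonal component lies in $\AutG(X_n)$ and thus $a_n:=g^{(n-1)}\in\AutG_{\prod_{\hat n}}(\nprod)$. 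Unravelling $g^{(k)}=g^{(k-1)}\circ a_k^{-1}$ yields $f=a_n\circ a_{n-1}\circ\cdots\circ a_1$ with $a_i\in\AutG_{\prod_{\hat i}}(\nprod)$, which is the claimed factorisation; the reverse inclusion holds because each $\AutG_{\prod_{\hat i}}(\nprod)$ is a subgroup of $\AutG(\nprod)$.

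For the split exact sequence, fix $i$ and transport the picture along the reordering homeomorphism $\phi\colon\nprod\to\prod_{\hat i}\times X_i$ used in the proof of Lemma \ref{lreplace}, which brings $X_i$ to the last slot. Conjugation by $\phi$ is an isomorphism $\AutG(\nprod)\cong\AutG(\prod_{\hat i}\times X_i)$ restricting to an isomorphism $\AutG_{\prod_{\hat i}}(\nprod)\cong\AutG_{\prod_{\hat i}}(\prod_{\hat i}\times X_i)$, and, via the same identity $\phi^{-1}\circ\iota'_i=\iota_i$ that appears in the proof of Lemma \ref{lreplace}, it intertwines $\phi_i$ with the homomorphism $(p_{\prod_{\hat i}},g)\mapsto g\circ\iota_{X_i}$ of Proposition \ref{egred}(d). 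Since each $X_j$ is connected, $\prod_{\hat i}$ is a connected $G$-CW complex, so Proposition \ref{egred}(d), with base $\prod_{\hat i}$ and fibre $X_i$, provides the split short exact sequence $0\to[\prod_{\hat i},\aut_1(X_i)]_G\to\AutG_{\prod_{\hat i}}(\prod_{\hat i}\times X_i)\to\AutG(X_i)\to 0$, with splitting $h\mapsto(p_{\prod_{\hat i}},h\circ p_{X_i})$; pulling back along $\phi$ gives precisely the asserted sequence, where $\phi_i(p_1,\ldots,f_i,\ldots,p_n)=f_i\circ\iota_i$ and the splitting is $h\mapsto(p_1,\ldots,h\circ p_i,\ldots,p_n)$.

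The argument is essentially bookkeeping, and the one point I expect to need genuine care is the recursion in the first part: one must verify that the ``identity block'' $(p_1,\ldots,p_k)$ of $g^{(k)}$ grows by exactly one coordinate at each step — that is, that the $k$-th coordinate of $g^{(k-1)}\circ a_k^{-1}$ collapses to $p_k$ while the first $k-1$ coordinates remain the $p_j$. I will also point out, as in the higher-product non-equivariant case, that no uniqueness of the factorisation is claimed: the subgroups $\AutG_{\prod_{\hat i}}(\nprod)$ generally fail to intersect pairwise trivially, so the displayed identity is to be read as an equality of subsets of $\AutG(\nprod)$.
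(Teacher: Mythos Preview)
Your argument is correct and follows the same peeling strategy as the paper: strip off one $\AutG_{\prod_{\hat k}}(\nprod)$ factor at a time using Lemma \ref{lreplace} and $G$-reducibility, and for the split sequence reduce to Proposition \ref{egred}(d) via the permutation of factors from Lemma \ref{lreplace}. The paper organises the induction as a chain of subgroup identities $\Aut_{X_1\times\cdots\times X_{k-1}}^G(\nprod)=\Aut_{X_1\times\cdots\times X_k}^G(\nprod)\cdot\Aut_{\prod_{\hat k}}^G(\nprod)$ and invokes Theorem \ref{tgred} to verify that the left factor lies in $\AutG(\nprod)$; your observation that $g^{(k)}=g^{(k-1)}\circ a_k^{-1}$ is automatically in $\AutG(\nprod)$ is a mild streamlining that bypasses Theorem \ref{tgred}, but the underlying mechanism is identical.

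One aside in your write-up is mistaken and should be dropped: the subgroups $\AutG_{\prod_{\hat i}}(\nprod)$ \emph{do} intersect pairwise trivially. An element of $\AutG_{\prod_{\hat i}}(\nprod)\cap\AutG_{\prod_{\hat j}}(\nprod)$ for $i\neq j$ must have every coordinate equal to the corresponding projection, hence is the identity. In fact the factorisation $f=a_n\cdots a_1$ you produce is unique: since $a_n\cdots a_2$ and any competing $b_n\cdots b_2$ both lie in $\Aut_{X_1}^G(\nprod)$, the trivial-intersection step $\AutG(\nprod)=\Aut_{X_1}^G(\nprod)\cdot\AutG_{\prod_{\hat 1}}(\nprod)$ forces $a_1=b_1$, and one continues inductively. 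This does not affect the validity of your proof, only the closing remark.
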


\begin{proof}
We first claim that for any $2\leq k \leq n-1$  $$\Aut_{X_{1}\times \ldots \times X_{k-1}}^{G}(\nprod)=\Aut_{X_{1}\times \ldots \times X_{k}}^{G}(\nprod)\cdot \Aut_{\prod_{\hat{k}}}^{G}(\nprod).$$
Since $\Aut_{X_{1}\times \ldots \times X_{k}}^{G}(\nprod)\cap \Aut_{\prod_{\hat{k}}}^{G}(\nprod)=(p_{1},\ldots,p_{n})$, therefore if the factorization exists, it must be unique.
Now, 
\begin{align*}
&(p_{1},\ldots,p_{k-1},f_{k},\ldots,f_{n})=\\
&\Big[p_{1},\ldots,p_{k},f_{k+1}\circ (p_{1},\ldots,p_{k-1},\widetilde{f}_{k},p_{k+1},\ldots,p_{n}),\ldots,
f_{n}\circ (p_{1},\ldots,p_{k-1},\widetilde{f}_{k},p_{k+1},\ldots,p_{n})\Big]\circ \\ &(p_{1},\ldots,p_{k-1},f_{k},p_{k+1},\ldots,p_{n}).
\end{align*}
Second factor $(p_{1},\ldots,p_{k-1},f_{k},p_{k+1},\ldots,p_{n})\in \Aut_{\prod_{\hat{k}}}^{G}(\nprod)$ by Lemma \ref{lreplace}.
 Again,
\begin{align*}
&(p_{1},\ldots,p_{k},f_{k+1},\ldots,f_{n})\circ (p_{1},\ldots,p_{k-1},\widetilde{f}_{k},p_{k+1},\ldots,p_{n})=\\
&\Big[p_{1},\ldots,\widetilde{f}_{k},f_{k+1}\circ (p_{1},\ldots,p_{k-1},\widetilde{f}_{k},p_{k+1},\ldots,p_{n}),
\ldots,f_{n}\circ (p_{1},\ldots,p_{k-1},\widetilde{f}_{k},p_{k+1},\ldots,p_{n})\Big]\\
\end{align*}
in $\AutG(\nprod).$ Using $G$-reducibility we have,
$$f_{j}\circ (p_{1},\ldots,p_{k-1},\widetilde{f}_{k},p_{k+1},\ldots,p_{n})\circ \iota_{j}\in \AutG(X_{j}),~~~\forall ~~ k+1 \leq j\leq n.$$
Therefore the first factor,
$$\Big[p_{1},\ldots,p_{k},f_{k+1}\circ (p_{1},\ldots,p_{k-1},\widetilde{f}_{k},p_{k+1},\ldots,p_{n}),
\ldots,f_{n}\circ (p_{1},\ldots,p_{k-1},\widetilde{f}_{k},p_{k+1},\ldots,p_{n})\Big]$$ in $\Aut_{X_{1}\times \ldots \times X_{k}}^{G}(\nprod)$,
from Theorem \ref{tgred}. Hence we have proved our claim.

\noindent Similarly we can prove that, $$\AutG(\nprod) = \Aut_{X_{1}}^{G}(\nprod)\cdot \Aut_{\prod_{\hat{1}}}^{G}(\nprod),$$ by using the factorization $$(f_{1},\ldots,f_{n})=\Big[p_{1},f_{2}\circ (\widetilde{f}_{1},p_{2},\ldots,p_{n}),\ldots,f_{n}\circ (\widetilde{f}_{1},p_{2},\ldots,p_{n})\Big]\circ (f_{1},p_{2},\ldots,p_{n}).$$
Hence,
\begin{align*}
& \AutG(\nprod)=\Aut_{X_{1}}^{G}(\nprod)\cdot \Aut_{\prod_{\hat{1}}}^{G}(\nprod)\\
&=\Aut_{X_{1}\times X_{2}}(\nprod)\cdot \Aut_{\prod_{\hat{2}}}^{G}(\nprod)\cdot \Aut_{\prod_{\hat{1}}}^{G}(\nprod)\\
&= \ldots \\
&=\Aut_{\prod_{\hat{n}}}^{G}(\nprod)\ldots \Aut_{\prod_{\hat{1}}}^{G}(\nprod).
\end{align*}
For the second part of the theorem, we consider the group homomorphism $$\phi_{i}\colon \Aut_{\prod_{\hat{i}}}^{G}(\nprod)\to \AutG(X_{i}),~~ \big(p_{1},\ldots,f_{i},\ldots,p_{n}\big) \mapsto f_{i}\circ \iota_{i}.$$
It has a splitting map $$\psi_{i}\colon \AutG(X_{i})\to \Aut_{\prod_{\hat{i}}}^{G}(\nprod) ~~ g  \mapsto \big(p_{1},\ldots,g\circ p_{i},\ldots,p_{n}).$$ From Lemma \ref{lreplace}, we can say that the map $\psi_{i}$ is well-defined. Therefore we get a split short exact sequence $$0\to \Ker(\phi_{i})\to \Aut_{\prod_{\hat{i}}}^{G}(\nprod)\xrightarrow[]{\phi_{i}} \AutG(X_{i}) \to 0.$$
Since each $X_{i}$ are connected, therefore $\prod_{\hat{i}}$ is connected and similar to \ref{egred} (d) we can say that $\Ker(\phi)$ is identified with $\big[\Pi_{\hat{i}},\aut_{1}(X_i)\big]_{G}$.
\end{proof}

\section{LU-factorization}\label{slu}

In the previous section we have described a factorization of $\Aut^G(X_1 \times \ldots \times X_n)$ which is symmetric. However, the number of factors in the decomposition increases with the number of factors in the topological product, which is inconvenient. In this section we give a different factorisation with always only two factors, called $LU$ factorisation. It is closely related $LU$ decomposition of matrices. We begin by introducing some notations. Define maps $u_{k}:\bX \to \bX$ and $l_{k}: \bX \to \bX$ on the product space $\nprod = X_1 \times \ldots \times X_n$:
$$l_{k}(x_{1},\ldots,x_{n}):=(x_{1},\ldots,x_{k},*, \ldots, *), ~~u_{k}(x_{1},\ldots,x_{n}):=(*,\ldots.*,x_{k},\ldots, x_{n}).$$
Note that $$l_{j}\circ l_{k}=l_{k}\circ l_{j}=l_{\min\{j,k\}}, ~~u_{j}\circ u_{k}=u_{k}\circ u_{j}=u_{\max\{j,k\}}.$$
Let us define two subsets $L^{G}(X_{1},\ldots,X_{n})$ and $U^{G}(X_{1},\ldots,X_{n})$ by

$$L^{G}(X_{1},\ldots,X_{n}):=\{f\in \AutG(\nprod): f_{k}=f_{k}\circ l_{k}, ~~1\leq k \leq n\},$$ 

$$U^{G}(X_{1},\ldots,X_{n}):=\{f\in \AutG(\nprod): f_{k}=f_{k}\circ u_{k},~~f_{kk}=\Id_{X_{k}},  ~~1\leq k\leq n\}.$$
The induced endomorphism on homotopy $f_{\#i}^H\colon  \pi_{i}(X_{1}^H)\times \ldots \times  \pi_{i}(X_{n}^H)\to  \pi_{i}(X_{1}^H)\times \ldots \times \pi_{i}(X_{n}^H)$ can be represented as matrix form for all $H\leq G$,

$$M_{i}(f^{H}):=
\begin{bmatrix}
(f^{H}_{11})_{\#i}& (f^{H}_{12})_{\#i}& \ldots  (f^{H}_{1n})_{\#i} \\
(f^{H}_{21})_{\#i}& (f^{H}_{22})_{\#i}& \ldots  (f^{H}_{2n})_{\#i} \\
.                                                                                                                                 \\
.                                                                                                                                  \\
.                                                                                                                                   \\
(f^{H}_{n1})_{\#i} & (f^{H}_{n2})_{\#i}& \ldots (f^{H}_{nn})_{\#i}
\end{bmatrix}.$$

If $f\in L^{G}(X_{1},\ldots,X_{n})$, then 
$$f_{kj}=f_{k}\circ \iota_{j}= f_{k}\circ l_{k}\circ \iota_{j}= 0 ~~~~~\forall ~~ k<j ~~~ (\because l_{k}\circ \iota_{j}=0,\forall k<j).$$

%\begin{align*} 
%&\Rightarrow f_{k}=f_{k}\circ l_{k}\\
%&\Rightarrow f_{k}\circ \iota_{j}=f_{k}\circ l_{k}\circ \iota_{j}\\
%&\Rightarrow f_{k}\circ \iota_{j}= 0 ~~~~ (\because l_{k}\circ \iota_{j}=0,\forall k<j) \\
%&\Rightarrow f_{kj}=0 ~~~~~\forall k<j.
%\end{align*}

Therefore the matrix can be written as: 
		
$$M_{i}(f^{H})=
\begin{bmatrix}
(f^{H}_{11})_{\#i}& 0&0& \ldots 0 \\
(f^{H}_{21})_{\#i}& (f^{H}_{22})_{\#i}&0& \ldots 0 \\
.                                                                                                                                 \\
.                                                                                                                                  \\
.                                                                                                                                   \\
(f^{H}_{n1})_{\#i} & (f^{H}_{n2})_{\#i}&(f^{H}_{n3})_{\#i}& \ldots (f^{H}_{nn})_{\#i}
\end{bmatrix}.$$

\noindent Hence the induced map of each element in $L^{G}(X_{1},\ldots,X_{n})$ gives the lower triangular matrices on homotopy groups of every fixed point spaces.
Similarly, for $g\in U^{G}(X_{1},\ldots,X_{n})$ we have,

$$ g_{kj} = g_{k}\circ \iota_{j} = g_{k}\circ u_{k}\circ \iota_{j} = 0 ~~~~~~\forall ~~ k>j , ~~~~~(\because u_{k}\circ \iota_{j}=0,\forall~~ k>j).  $$
%\begin{align*} 
%&\Rightarrow g_{k}=g_{k}\circ u_{k}\\
%&\Rightarrow g_{k}\circ \iota_{j}=g_{k}\circ u_{k}\circ \iota_{j}\\
%&\Rightarrow g_{k}\circ \iota_{j}= 0 ~~~~~(\because u_{k}\circ \iota_{j}=0,\forall k>j) \\
%&\Rightarrow g_{kj}=0 ~~~~~~\forall k>j.
%\end{align*}

\noindent Therefore the matrix can be written as:
		
$$M_{i}(g^{H}):=
\begin{bmatrix}
1&(g^{H}_{12})_{\#i}& (g^{H}_{13})_{\#i}&\ldots (g^{H}_{1n})_{\#i} \\
0&1& (g^{H}_{23})_{\#i}&\ldots (g^{H}_{2n})_{\#i} \\
.                                                                                                                                 \\
.                                                                                                                                  \\
.                                                                                                                                   \\
0 & 0&0&\ldots 1
\end{bmatrix}.$$

\noindent Hence induced map of each element in $U^{G}(X_{1},\ldots,X_{n})$ gives upper triangular matrices with identity automorphisms on the diagonal entries on homotopy groups of every fixed point spaces.

As in the non-equivariant case, we have the following chain of inclusions:
$$L^{G}(X_{1},\ldots,X_{n})\subseteq L^{G}(X_{1}\times X_{2},X_{3},\ldots,X_{n})\subseteq \ldots \subseteq L^{G}(\nprod)=\AutG(\nprod),$$
		
$$\AutG(\nprod) \supseteq U^{G}(X_{1},\ldots,X_{n})\supseteq U^{G}(X_{1}\times X_{2},X_{3},\ldots,X_{n})\supseteq \ldots \supseteq U^{G}(\nprod)=\{ 1 \} .$$

\noindent Now we show that these two subsets are actually the subgroups of $\AutG(\nprod)$.

\begin{proposition}\label{Sgred}
Assume that all $G$-self-homotopy equivalences of  $\nprod$ are $G$-reducible. Then $L^{G}(X_{1},\ldots,X_{n})$  and  $U^{G}(X_{1},\ldots,X_{n})$ are subgroups of $\AutG(\nprod)$.
\end{proposition}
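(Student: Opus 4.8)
The plan is to verify the two subgroup axioms (closure under composition and under inversion) for each of $L^G(X_1,\ldots,X_n)$ and $U^G(X_1,\ldots,X_n)$ separately, using the matrix description of induced maps on fixed-point homotopy groups as the organizing principle, and invoking Corollary \ref{Sher} to promote candidate maps to genuine $G$-self-homotopy equivalences.

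First I would treat $L^G(X_1,\ldots,X_n)$. For closure, take $f,g\in L^G$, so $f_k = f_k\circ l_k$ and $g_k = g_k\circ l_k$ for all $k$. I would compute $(f\circ g)_k = f_k\circ g$ and check that $(f\circ g)_k\circ l_k = (f\circ g)_k$; concretely, since $g_j\circ l_k = g_j$ for $j\le k$ and (because $g\in L^G$) the first $k$ coordinates of $g$ depend only on the first $k$ coordinates of the input, we get $g\circ l_k = l_k\circ g\circ l_k$ at the level that matters, so $f_k\circ g\circ l_k = f_k\circ l_k\circ g\circ l_k = f_k\circ g\circ l_k$... more cleanly: $g$ preserves the "lower triangular" structure meaning $l_k\circ g = l_k\circ g\circ l_k$, hence $f_k\circ g\circ l_k = f_k\circ l_k\circ (g\circ l_k)$; combined with $f_k = f_k\circ l_k$ this gives $(f\circ g)_k\circ l_k = (f\circ g)_k$. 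The diagonal of the composite is $(f\circ g)_{kk} = f_{kk}\circ g_{kk}$, which lies in $\AutG(X_k)$, so by Corollary \ref{Sher} the composite is in $\AutG(\nprod)$, hence in $L^G$. For inverses, let $f\in L^G$ with $G$-homotopy inverse $\widetilde f$. On each fixed-point space $X^H$ the induced matrix $M_i(f^H)$ is lower triangular with the diagonal entries $(f^H_{kk})_{\#i}$ being isomorphisms; the inverse of a lower triangular (block) matrix with invertible diagonal blocks is again lower triangular, so $(\widetilde f^H_{kj})_{\#i} = 0$ for $k<j$. I would then argue that $\widetilde f$ can be replaced up to $G$-homotopy by a representative satisfying $\widetilde f_k = \widetilde f_k\circ l_k$ strictly — here the splitting/replacement technology behind Theorem \ref{tgred} and Lemma \ref{lreplace} is the relevant tool — and that $\widetilde f_{kk}\in\AutG(X_k)$, so $\widetilde f\in L^G$.

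For $U^G(X_1,\ldots,X_n)$ the argument is parallel but with $u_k$ in place of $l_k$, using $u_j\circ u_k = u_{\max\{j,k\}}$ and the extra normalization $f_{kk}=\Id_{X_k}$. Closure under composition: for $f,g\in U^G$, the composite has $(f\circ g)_k = f_k\circ g$ with $f_k\circ g\circ u_k = f_k\circ g$ by the analogous upper-triangular-preservation computation, and $(f\circ g)_{kk} = f_{kk}\circ g_{kk} = \Id_{X_k}$, so Corollary \ref{Sher} gives membership in $\AutG(\nprod)$ and hence in $U^G$. For inverses, the induced matrix $M_i(g^H)$ is upper triangular with identity diagonal blocks, so its inverse is again upper triangular with identity diagonal blocks; thus a suitable representative of $\widetilde g$ has $\widetilde g_k = \widetilde g_k\circ u_k$ and $\widetilde g_{kk}=\Id_{X_k}$, giving $\widetilde g\in U^G$.

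The main obstacle I anticipate is not the matrix bookkeeping but the passage from the homotopy-level statement "the inverse has vanishing off-triangular components on every $\pi_i(X^H)$" to the space-level statement "the inverse admits a representative $\widetilde f$ with $\widetilde f_k = \widetilde f_k\circ l_k$ (resp. $\circ\, u_k$) on the nose." This is exactly the kind of rigidification that the earlier results (Lemma \ref{lreplace}, Theorem \ref{tgred}, and the exact-sequence splittings) are designed to supply: one builds the desired representative coordinate-by-coordinate, using $G$-reducibility at each stage to know the relevant diagonal pieces are equivalences and Corollary \ref{Sher} to know the assembled map is a $G$-equivalence. I would be careful to phrase the closure-under-composition step so that it only uses the strict relations $f_k = f_k\circ l_k$ and the identities among the $l_k$'s (which hold on the nose), reserving the homotopy-theoretic replacement arguments solely for the inversion step; this keeps the proof clean and isolates the single genuinely delicate point.
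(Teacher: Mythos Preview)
Your closure arguments for both $L^G$ and $U^G$ are fine and essentially agree with the paper: the identity $l_k\circ g\circ l_k = l_k\circ g$ (and its $u$-analogue) gives $(f\circ g)_k\circ l_k = (f\circ g)_k$ on the nose, and $(f\circ g)_{kk}=f_{kk}\circ g_{kk}$ handles the diagonal.

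The inversion step, however, has a genuine gap. From the lower-triangularity of $M_i(f^H)$ you correctly deduce that $M_i(\widetilde f^{\,H})$ is lower triangular, i.e.\ $(\widetilde f^{\,H}_{kj})_{\#i}=0$ for all $k<j$, all $H\leq G$, all $i$. But this is far weaker than $\widetilde f_k \simeq_G \widetilde f_k\circ l_k$. Two distinct obstructions arise: first, a map inducing zero on every $\pi_i$ of every fixed-point set need not be equivariantly null-homotopic; second, and more seriously, the condition $f_k=f_k\circ l_k$ says $f_k$ factors through the projection $p_{1\ldots k}$, which is not detected by the restrictions $f_{kj}=f_k\circ\iota_j$ alone (a map $X_1\times X_2\to X_1$ can satisfy $f_1\circ\iota_2\simeq *$ without satisfying $f_1\simeq f_1\circ l_1$). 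The tools you invoke, Lemma~\ref{lreplace} and Theorem~\ref{tgred}, take as input a map already of the form $(p_1,\ldots,p_{k-1},f_k,\ldots,f_n)$; they do not provide a mechanism for manufacturing such a form from homotopy-group information about an arbitrary $\widetilde f$.

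The paper avoids the matrix route for inversion entirely and argues by induction on $n$. For $L^G$: in the base case $n=2$ one writes $f=(f_{11}\circ p_1,p_2)\circ\bigl[(f_{11}\circ p_1,p_2)^{-1}\circ f\bigr]$ and checks the second factor lies in $\AutG_{X_1}(X_1\times X_2)\subseteq L^G(X_1,X_2)$, so $f^{-1}$ is a product of two elements already known to lie in groups contained in $L^G$. For the inductive step one has $f^{-1}\in L^G(X_1\times X_2,X_3,\ldots,X_n)$ by hypothesis, and the only missing relation $p_1\circ f^{-1}=p_1\circ f^{-1}\circ l_1$ is obtained by showing $p_{12}\circ f^{-1}\circ\iota_{12}=(p_{12}\circ f\circ\iota_{12})^{-1}$ in $\AutG(X_1\times X_2)$ and that $p_{12}\circ f\circ\iota_{12}\in L^G(X_1,X_2)$; the inductive hypothesis is exactly what makes the computation $(p_{12}\circ f^{-1}\circ\iota_{12})\circ(p_{12}\circ f\circ\iota_{12})=\Id$ go through. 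For $U^G$ the paper identifies $U^G(X_1,X_2)$ with the kernel of $\phi\colon\AutG_{X_2}(X_1\times X_2)\to\AutG(X_1)$ and then inducts via a split extension with kernel $U^G(X_1\times\cdots\times X_{n-1},X_n)$ and quotient $U^G(X_1,\ldots,X_{n-1})$. In both cases the argument stays at the level of maps and never passes through $\pi_*$.
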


\begin{proof}

(a) We first consider the case of $L^{G}(X_{1},\ldots,X_{n})$. Let $f,g\in L^{G}(X_{1},\ldots,X_{n})$. Then $p_{j}\circ f=f_{j}=f_{j}\circ l_{j};  ~ p_{j}\circ g=g_{j}=g_{j}\circ l_{j}$.

\noindent For any $j\leq k$, we know that $l_{j}\circ l_{k}=l_{k}\circ l_{j}=l_{j}$.
Now,
\begin{align*}
l_{k}\circ g\circ l_{k}&=l_{k}\circ (g_{1},\ldots,g_{k},\ldots,g_{n})\circ l_{k}
=l_{k}\circ  (g_{1}\circ l_{k},\ldots,g_{k}\circ l_{k},\ldots,g_{n}\circ l_{k})\\
&=(g_{1}\circ l_{k},\ldots,g_{k}\circ l_{k},*,\ldots,*)
=(g_{1}\circ l_{1}\circ l_{k},\ldots,g_{k}\circ l_{k}\circ l_{k},*,\ldots,*)\\
&=(g_{1}\circ l_{1},\ldots,g_{k}\circ l_{k},*,\ldots,*)
=(g_{1},\ldots,g_{k},*,\ldots,*)
=l_{k}\circ g.\\
&(\because  l_{j}\circ l_{k}=l_{\min\{j,k\}})
\end{align*}

\noindent So,
$$p_{k}\circ (f\circ g)=p_{k}\circ f\circ l_{k}\circ g=p_{k}\circ f\circ l_{k}\circ g\circ l_{k}=p_{k}\circ f\circ g\circ l_{k}
=p_{k}\circ (f\circ g)\circ l_{k}.$$

\noindent Hence $f,g\in L^{G}(X_{1},\ldots,X_{n})$ implies $f\circ g\in L^{G}(X_{1},\ldots,X_{n})$.
To prove $L^{G}(X_{1},\ldots,X_{n})$ is closed under formation of inverses, we proceed by induction. Let us first consider $L^{G}(X_{1},X_{2})$. We can decompose an element $f\in L^{G}(X_{1},X_{2})$ as  $$f=(f_{11}\circ p_{1},p_{2})\circ [(f_{11}\circ p_{1},p_{2})^{-1}\circ f].$$
We split the proof into several claims. 
\vspace{.25 cm}

\noindent {\bf Claim I:} $(f_{11}\circ p_{1},p_{2})^{-1}=(f^{-1}_{11}\circ p_{1},p_{2})$. 
\vspace{.25 cm}

\noindent Observe that, 

\begin{align*}
(f_{11}\circ p_{1},p_{2})\circ (f^{-1}_{11}\circ p_{1},p_{2}) &
=[f_{11}\circ p_{1}\circ (f^{-1}_{11}\circ p_{1},p_{2}),p_{2}]
=[f_{11}\circ p_{1}(f^{-1}_{11}\circ p_{1},p_{2}),p_{2}] \\
& =(f_{11}\circ f^{-1}_{11}\circ p_{1},p_{2})
=(p_{1},p_{2})
=\Id_{X_{1}\times X_{2}}.
\end{align*}

\noindent Similarly, $(f^{-1}_{11}\circ p_{1},p_{2})\circ (f_{11}\circ p_{1},p_{2})=\Id_{X_{1}\times X_{2}}$. Hence the claim is true. 

\noindent Since $f\in L^{G}(X_{1},X_{2})$ we have $(f_{11}\circ p_{1},p_{2})\in \AutG(X_{1}\times X_{2})$, by using $G$-reducibility and Proposition \ref{egred}. Therefore $$(f^{-1}_{11}\circ p_{1},p_{2})=(f_{11}\circ p_{1},p_{2})^{-1}\in \AutG(X_{1}\times X_{2}).$$
\noindent Again, $f^{-1}_{11}\circ p_{1}=f^{-1}_{11}\circ p_{1}\circ l_{1},~~~~ p_{2}=p_{2}\circ l_{2}$.
\noindent Therefore $(f^{-1}_{11}\circ p_{1},p_{2})\in L^{G}(X_{1},X_{2}),$ which imply again $(f^{-1}_{11}\circ p_{1},p_{2})\circ f\in L^{G}(X_{1},X_{2})$.
\noindent Since,

\begin{align*}
p_{1}\circ (f^{-1}_{11}\circ p_{1},p_{2})\circ f &=p_{1}\circ (f^{-1}_{11}\circ p_{1},p_{2})\circ (f_{1},f_{2})
=p_{1}\circ [f^{-1}_{11}\circ p_{1}(f_{1},f_{2}),f_{2}]\\
&=p_{1}\circ (f^{-1}_{11}\circ f_{1},f_{2})
=f^{-1}_{11}\circ f_{1}
=f^{-1}_{11}\circ f_{11}
=p_{1}.\\
&~~~~~~(\because f_{1}=f_{1}\circ l_{1}=f_{1}\circ \iota_{1}).
\end{align*}
Therefore $(f^{-1}_{11}\circ p_{1},p_{2})\circ f \in \AutG_{X_{1}}(X_{1}\times X_{2})\subseteq L^{G}(X_{1}\times X_{2})$.
Since $\AutG_{X_{1}}(X_{1}\times X_{2})$ is a group and  $(f^{-1}_{11}\circ p_{1},p_{2})\circ f \in \AutG_{X_{1}}(X_{1}\times X_{2}),$ we have,
\begin{align*}
&\Rightarrow f^{-1}\circ (f^{-1}_{11}\circ p_{1},p_{2})^{-1}\in \AutG_{X_{1}}(X_{1}\times X_{2})\subseteq L^{G}(X_{1},X_{2})\\
&\Rightarrow  f^{-1}\circ (f^{-1}_{11}\circ p_{1},p_{2})^{-1}\circ  (f^{-1}_{11}\circ p_{1},p_{2})\in L^{G}(X_{1},X_{2})\\
&\Rightarrow f^{-1}\in L^{G}(X_{1}, X_{2}).
\end{align*}
Hence $L^{G}(X_{1},X_{2})$ becomes a group.

Now let $f\in L^{G}(X_{1},\ldots,X_{n})\subseteq L^{G}(X_{1}\times X_{2},\ldots,X_{n})$, we can assume inductively that $f^{-1}\in  L^{G}(X_{1}\times X_{2},\ldots,X_{n})$.
\noindent Then we get the conditions $$p_{j}\circ f^{-1}=p_{j}\circ f^{-1}\circ l_{j},~~~ 2 \leq j \leq n.$$ So it remains to prove that $p_{1}\circ f^{-1}=p_{1}\circ f^{-1}\circ l_{1}$, which is equivalent to show that $p_{12}\circ f^{-1}\circ \iota_{12}\in L^{G}(X_{1},X_{2})$. This is because,

\begin{align*}
&p'_{1}\circ (p_{12}\circ f^{-1}\circ \iota_{12})=p'_{1}\circ (p_{12}\circ f^{-1}\circ \iota_{12})\circ l'_{1}\\
\Leftrightarrow & p'_{1}\circ p_{12}\circ f^{-1}\circ \iota_{12}=p_{1}\circ f^{-1}\circ l_{1}\circ \iota_{12}\\
\Leftrightarrow & p'_{1}\circ p_{12}\circ f^{-1}\circ \iota_{12}\circ p_{12}=p_{1}\circ f^{-1}\circ l_{1}\circ \iota_{12}\circ p_{12}\\
\Leftrightarrow & p'_{1}\circ p_{12}\circ f^{-1}\circ l_{12}=p_{1}\circ f^{-1}\circ l_{1}\circ l_{12}\\
\Leftrightarrow & p'_{1}\circ p_{12}\circ f^{-1}=p_{1}\circ f^{-1}\circ l_{1},~~~~~(\because f^{-1}\in L^{G}(X_{1}\times X_{2},\ldots,X_{n}))\\
\Leftrightarrow & p_{1}\circ f^{-1}=p_{1}\circ f^{-1}\circ l_{1}.
\end{align*}
See the commutative diagrams:
			
$$
\xymatrix{
X_{1}\times \ldots \times X_{n} \ar[rrdd]_{p_{1}}  \ar[rr]^{p_{12}}  &&   X_{1}\times X_{2} \ar[dd]^{p'_{1}} \\\\
&& X_{1}
}
$$

$$
\xymatrix{
X_{1}\times X_{2} \ar[dd]_{\iota_{12}}  \ar[rr]^{l'_{1}}  &&   X_{1}\times X_{2} \ar[dd]^{\iota_{12}} \\\\
X_{1}\times \ldots \times X_{n} \ar[rr]_{l_{1}} && X_{1}\times \ldots \times X_{n}
}
$$
			
and,

$$
\xymatrix{
X_{1}\times \ldots \times X_{n} \ar[rrdd]_{l_{12}}  \ar[rr]^{p_{12}}  &&   X_{1}\times X_{2} \ar[dd]^{\iota_{12}} \\\\
&& X_{1}\times \ldots \times X_{n}
}
$$
		
\noindent From Proposition \ref{Rtgred} we have $p_{12}\circ f\circ \iota_{12}\in \AutG(X_{1}\times X_{2})$.  Now,

\begin{align*}
 p'_{1}\circ (p_{12}\circ f\circ \iota_{12})=p_{1}\circ f\circ \iota_{12}
=p_{1}\circ f\circ l_{1}\circ \iota_{12} 
 =p_{1}\circ f\circ \iota_{12}\circ l'_{1}\\ 
=p'_{1}\circ (p_{12}\circ f\circ \iota_{12})\circ l'_{1},
\end{align*}
and,
 $$p'_{2}\circ (p_{12}\circ f\circ \iota_{12})=p'_{2}\circ (p_{12}\circ f\circ \iota_{12})\circ l'_{2},~~~(\because l'_{2} = \Id_{X_{1}\times X_{2}}).$$
 
\noindent Therefore $p_{12}\circ f\circ \iota_{12}\in L^{G}(X_{1},X_{2})$.

\vspace{.25 cm}
\noindent{\bf Claim II:} $(p_{12}\circ f\circ \iota_{12})^{-1}=p_{12}\circ f^{-1}\circ \iota_{12}$.
\vspace{.25 cm}

\noindent Note that,
\begin{align*}
(p_{12}\circ f^{-1}\circ \iota_{12})\circ (p_{12}\circ f\circ \iota_{12})
& = p_{12}\circ f^{-1}\circ \iota_{12}\circ p_{12}\circ f\circ \iota_{12}
= p_{12}\circ f^{-1}\circ l_{12}\circ f\circ \iota_{12}\\
& = p_{12}\circ f^{-1}\circ f\circ \iota_{12}
= p_{12}\circ \iota_{12}
= \Id_{X_{1}\times X_{2}}.\\
&~~~~~(\because f^{-1}\in L^{G}(X_{1}\times X_{2},\ldots,X_{n}))
\end{align*}
Similarly, $(p_{12}\circ f\circ \iota_{12})\circ (p_{12}\circ f^{-1}\circ \iota_{12})=\Id_{X_{1}\times X_{2}}$.
Therefore $ (p_{12}\circ f^{-1}\circ \iota_{12})=(p_{12}\circ f\circ \iota_{12})^{-1}\in L^{G}(X_{1}\times X_{2})$. So that $f^{-1}\in L^{G}(X_{1},\ldots,X_{n})$.
\noindent Consequently, $L^{G}(X_{1},\ldots,X_{n})$ is closed under taking inverses. Hence  $L^{G}(X_{1},\ldots,X_{n})$ becomes a group.

\vspace{0.5 cm}
(b)
Now we want to prove that $U^{G}(X_{1},\ldots,X_{n})$ is a group.
Let us first prove that $U^{G}(X_{1},X_{2})$ is a group and then we proceed by induction.
\noindent If $f\in U^{G}(X_{1},X_{2})$  then we have $f_{2}=f_{2}\circ u_{2}$ and $f_{11}=\Id_{X_{1}},~~~f_{22}=\Id_{X_{2}}$.
Now,
\begin{align*}
f_{2} =f_{2}\circ u_{2}
 =f_{2}\circ \iota_{2}\circ p_{2}
 =f_{22}\circ p_{2}
 =p_{2}
~~~~~~(\because u_{2}=\iota_{2}\circ p_{2}).
\end{align*}
This implies that $f\in \AutG_{X_{2}}(X_{1}\times X_{2})$ (cf. Proposition \ref{egred}). Thus we have shown that $$U^{G}(X_{1},X_{2})\subseteq \AutG_{X_{2}}(X_{1},X_{2}).$$ From Proposition \ref{egred}, we have a group homomorphsim:

$$\phi:\AutG_{X_{2}}(X_{1}\times X_{2})\to \AutG(X_{1}) ,~~ (f_{1},p_{2}) \mapsto f_{1}\circ \iota_{1}.$$
Observe that, $U^{G}(X_{1},X_{2})=\Ker(\phi)$. Hence $U^{G}(X_{1},X_{2})$ is a group. 
We now proceed by induction on $n$. Assume that $U^{G}(X_{1},\ldots,X_{n-1})$ is a group.

\vspace{.25 cm}
\noindent {\bf Claim III:}  $f\in U^{G}(X_{1},\ldots,X_{n})\Rightarrow p_{1 \ldots (n-1)}\circ f\circ \iota_{1 \ldots (n-1)}\in U^{G}(X_{1},\ldots,X_{n-1})$.
\vspace{.25 cm}

\noindent  From Propostion \ref{Rtgred} we have $p_{1 \ldots (n-1)}\circ f\circ \iota_{1 \ldots (n-1)}\in \AutG(X_{1}\times \ldots \times X_{n-1})$. Now,
\begin{align*}
 p'_{j}\circ p_{1 \ldots (n-1)}\circ f\circ \iota_{1 \ldots (n-1)}\circ \iota'_{j}&=p_{j}\circ f\circ \iota_{j}
 =f_{jj}
=\Id_{X_{j}}, ~~~~~~~\forall j=1,\ldots,n-1.
\end{align*}
And,
\begin{align*}
 p'_{j}\circ p_{1 \ldots (n-1)}\circ f\circ \iota_{1 \ldots (n-1)}\circ u'_{j} =p_{j}\circ f\circ u_{j}\circ \iota_{1 \ldots (n-1)}
=p_{j}\circ f\circ \iota_{1 \ldots (n-1)}\\
=p'_{j}\circ p_{1 \ldots (n-1)}\circ f\circ \iota_{1 \ldots (n-1)}
~~~~\forall j=1,\ldots,n-1.
\end{align*}
			
Here the maps $p_j', \iota_j', u_j' $ are defined by the following commutative diagrams:

$$
\xymatrix{
	X_{1}\times \ldots \times X_{n} \ar[rrdd]_{p_{j}}  \ar[rr]^{p_{1\ldots (n-1)}}  &&   X_{1}\times \ldots \times X_{n-1} \ar[dd]^{p'_{j}} \\\\
	&& X_{j}
}
$$

$$
\xymatrix{
	X_{1}\times \ldots \times X_{n-1} \ar[rr]^{\iota_{1\ldots (n-1)}}  &&  \nprod  \\\\
	X_{j} \ar[rruu]_{\iota_{j}}\ar[uu]^{\iota'_{j}} 
}
$$ 

$$
\xymatrix{
X_{1}\times \ldots \times X_{n-1} \ar[dd]_{\iota_{1 \ldots (n-1)}}  \ar[rr]^{u'_{j}}  &&   X_{1}\times \ldots \times X_{n-1} \ar[dd]^{\iota_{1 \ldots (n-1)}} \\\\
\nprod \ar[rr]_{u_{j}} && \nprod
}
$$
			
\noindent Therefore the claim is proved.

From Proposition \ref{egred}, we have the split short exact sequence,

$$0\to \Ker(\phi)\xrightarrow[]{\iota} \AutG_{X_{n}}(\nprod)\xrightarrow[]{\phi} \AutG(X_{1}\times \ldots \times X_{n-1})\to 0,$$

\noindent where $$\phi(f):= p_{1 \ldots (n-1)}\circ f\circ \iota_{1 \ldots (n-1)} = f_{1 \ldots (n-1)}\circ \iota_{1 \ldots (n-1)},$$ and $$\Ker(\phi) = \big\{f\in \AutG_{X_{n}}(\nprod): p_{1 \ldots (n-1)}\circ f\circ \iota_{1 \ldots (n-1)} =\Id_{X_{1}\times \ldots \times X_{n-1}}\big\}.$$
Now, $$\psi:\AutG(X_{1}\times \ldots \times X_{n-1})\to \AutG_{X_{n}}(\nprod)$$ is the splitting map of $\phi$ defined as $\psi(h):=(h\circ p_{1 \ldots (n-1)},p_{n})$.
\noindent Observe that, $$f\in U^{G}(X_{1},\ldots,X_{n}) \Rightarrow f_{n}=f_{n}\circ u_{n} \text{  and  } ~~ f_{nn}=\Id_{X_{n}}.$$
Therefore, $f_{n}=f_{n}\circ u_{n}=f_{n}\circ \iota_{n}\circ p_{n}=f_{nn}\circ p_{n}=p_{n}$. Hence $f\in \AutG_{X_{n}}(\nprod)$, i.e. $U^{G}(X_{1},\ldots, X_{n})\subseteq \AutG_{X_{n}}(\nprod)$.
\vspace{.25 cm}

\noindent {\bf Claim IV:} $\Ker(\phi) = U^{G}(X_{1}\times \ldots \times X_{n-1},X_{n}).$
 \vspace{.25 cm}

\noindent Note that,
\begin{align*}
&f\in U^{G}(X_{1}\times \ldots \times X_{n-1},X_{n})\\
&\Rightarrow p_{1 \ldots (n-1)}\circ f\circ \iota_{1 \ldots (n-1)} = \Id_{X_{1}\times \ldots \times X_{n-1}}; ~f_{n}=f_{n}\circ u_{n} , ~f_{nn}=\Id_{X_{n}}\\
&\Rightarrow p_{1 \ldots (n-1)}\circ f\circ \iota_{1 \ldots (n-1)} = \Id_{X_{1}\times \ldots \times X_{n-1}}; f_{n}=p_{n}\\
&\Rightarrow f\in \Ker(\phi).
\end{align*}
Conversely,
\begin{align*}
&g\in \Ker(\phi)\\
&\Rightarrow p_{1\ldots (n-1)}\circ g\circ \iota_{1 \ldots (n-1)}=\Id_{X_{1}\times...\times X_{n-1}};~~ g_{n} = p_{n}\\
&\Rightarrow p_{1\ldots (n-1)}\circ g\circ \iota_{1 \ldots (n-1)}=\Id_{X_{1}\times...\times X_{n-1}};~~ g_{n} = g_{n}\circ u_{n},~ g_{nn} = \Id_{X_{n}}\\
&\Rightarrow g\in U^{G}(X_{1}\times \ldots \times X_{n-1},X_{n}).
\end{align*}
So  $\Ker(\phi)=U^{G}(X_{1}\times \ldots \times X_{n-1},X_{n})\subseteq U^{G}(X_{1},\ldots,X_{n})$. Therefore we get the split short exact sequence,
$$0\to U^{G}(X_{1}\times \ldots \times X_{n-1},X_{n})\xrightarrow []{\iota} U^{G}(X_{1},\ldots,X_{n})\xrightarrow [] {\phi} U^{G}(X_{1},\ldots,X_{n-1})\to 0.$$
Hence $U^{G}(X_{1},\ldots,X_{n})$ is the semi-direct product of $U^{G}(X_{1}\times \ldots \times X_{n-1},X_{n})$ with $U^{G}(X_{1},\ldots,X_{n-1}).$ Hence it is a group.
\end{proof}

Finally we have the symmetric decomposition of $\AutG(\nprod)$ in the following which is an equivariant version of \cite[Theorem 5.4]{PRshe} 
\begin{theorem}\label{tlu}
Assume that all the $G$-self-homotopy equivalences of $\nprod$ are $G$-reducible. Then 
$$\AutG(\nprod)=L^{G}(X_{1},\ldots,X_{n})\cdot U^{G}(X_{1},\ldots,X_{n}).$$
\end{theorem}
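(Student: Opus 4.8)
The plan is to argue by induction on the number $n$ of factors, reducing the $n$-fold case to the $2$-fold decomposition of Theorem~\ref{Mgred}. For $n=1$ there is nothing to prove, since $L^{G}(X_{1})=\AutG(X_{1})$ and $U^{G}(X_{1})=\{1\}$. For $n\geq 2$ I would split off the last factor, writing $\bX=W\times X_{n}$ with $W:=X_{1}\times\cdots\times X_{n-1}$, after recording two easy preliminaries. First, by Proposition~\ref{Rtgred} the $2$-fold product $W\times X_{n}$ is $G$-reducible, so Theorem~\ref{Mgred} applies to it; and $W$ is itself $G$-reducible as an $(n-1)$-fold product, because applying the $n$-fold reducibility of $\bX$ to $g\times\Id_{X_{n}}$ for $g\in\AutG(W)$ gives $(g\times\Id_{X_{n}})_{ii}=g_{ii}\in\AutG(X_{i})$ for $i\le n-1$. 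Second, $\AutG_{W}(\bX)\subseteq L^{G}(X_{1},\ldots,X_{n})$: an element of $\AutG_{W}(\bX)$ has the form $(p_{1},\ldots,p_{n-1},g)$, and $p_{k}\circ l_{k}=p_{k}$ for $k\le n-1$ while $l_{n}=\Id$.

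Given $f\in\AutG(\bX)$, Theorem~\ref{Mgred} yields $f=a\cdot b$ with $a\in\AutG_{W}(\bX)\subseteq L^{G}(X_{1},\ldots,X_{n})$ and $b=(h,p_{n})\in\AutG_{X_{n}}(\bX)$, where by Proposition~\ref{egred} the map $h\colon\bX\to W$ satisfies $\bar h:=h\circ\iota_{W}\in\AutG(W)$. By the inductive hypothesis $\bar h=\bar\ell\cdot\bar u$ with $\bar\ell\in L^{G}(X_{1},\ldots,X_{n-1})$ and $\bar u\in U^{G}(X_{1},\ldots,X_{n-1})$. Then I would stabilize these: put $\ell_{0}:=\bar\ell\times\Id_{X_{n}}$ and $u_{0}:=\bar u\times\Id_{X_{n}}$, and check, using the intertwining identities $p_{W}\circ l_{k}=l_{k}^{W}\circ p_{W}$, $p_{W}\circ u_{k}=u_{k}^{W}\circ p_{W}$ and $p_{W}\circ\iota_{k}=\iota_{k}^{W}$ for $k\le n-1$, that $\ell_{0}\in L^{G}(X_{1},\ldots,X_{n})$ and $u_{0}\in U^{G}(X_{1},\ldots,X_{n})$, both invertible since $\bar\ell,\bar u$ are.

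The heart of the argument is to show that $c:=\ell_{0}^{-1}\cdot b\cdot u_{0}^{-1}$ lies in $U^{G}(X_{1},\ldots,X_{n})$. A short computation, using $(\bar u^{-1}\circ p_{W},p_{n})\circ\iota_{W}=\iota_{W}\circ\bar u^{-1}$ and $p_{W}\circ(\varphi,p_{n})=\varphi$ for $\varphi\colon\bX\to W$, gives $c=(h'',p_{n})$ with $h''\circ\iota_{W}=\bar\ell^{-1}\circ\bar h\circ\bar u^{-1}=\Id_{W}$. Hence $c$ lies in the kernel of the splitting homomorphism $\phi\colon\AutG_{X_{n}}(\bX)\to\AutG(W)$, $\phi(h,p_{n})=h\circ\iota_{W}$, that appears in the proof of Proposition~\ref{Sgred}; by Claim~IV of that proof this kernel is $U^{G}(X_{1}\times\cdots\times X_{n-1},X_{n})$, which by the chain of inclusions stated just before Proposition~\ref{Sgred} is contained in $U^{G}(X_{1},\ldots,X_{n})$. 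Therefore $b=\ell_{0}\cdot c\cdot u_{0}$, so that
$$f=a\cdot b=(a\cdot\ell_{0})\cdot(c\cdot u_{0}),$$
with $a\cdot\ell_{0}\in L^{G}(X_{1},\ldots,X_{n})$ and $c\cdot u_{0}\in U^{G}(X_{1},\ldots,X_{n})$, since both are subgroups by Proposition~\ref{Sgred}; this completes the induction.

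The step I expect to be the main obstacle is the bookkeeping of the inductive step, namely verifying that crossing the $(n-1)$-fold $L$- and $U$-factors of $\bar h$ with $\Id_{X_{n}}$ keeps them inside the larger groups $L^{G}(X_{1},\ldots,X_{n})$ and $U^{G}(X_{1},\ldots,X_{n})$, and that the residual factor $c$ genuinely lands in $U^{G}(X_{1},\ldots,X_{n})$ and not merely in the coarser group $U^{G}(X_{1}\times\cdots\times X_{n-1},X_{n})$. The first is precisely what the intertwining identities above deliver, and the second is exactly the chain of inclusions already in place; granting these, the rest is formal manipulation inside the groups $L^{G}$ and $U^{G}$. (The factorization is moreover unique, because $L^{G}(X_{1},\ldots,X_{n})\cap U^{G}(X_{1},\ldots,X_{n})=\{1\}$ by Whitehead's theorem, although this is not needed for the statement.)
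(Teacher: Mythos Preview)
Your inductive scheme is genuinely different from the paper's: you peel off the \emph{last} factor, writing $\bX=W\times X_{n}$ with $W=X_{1}\times\cdots\times X_{n-1}$, apply Theorem~\ref{Mgred} to the two-fold product $W\times X_{n}$, and then invoke the $(n-1)$-fold inductive hypothesis on $\bar h\in\AutG(W)$. The paper instead merges the \emph{first} two factors, takes $W=X_{1}\times X_{2}$, applies the inductive hypothesis directly to $f\in\AutG(W\times X_{3}\times\cdots\times X_{n})$, and only then uses the $n=2$ case on $p_{12}\circ f'\circ\iota_{12}\in\AutG(X_{1}\times X_{2})$ to refine the resulting $L$-factor.

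There is, however, a real gap at the step you yourself flag as the main obstacle. You show that $c=\ell_{0}^{-1}\,b\,u_{0}^{-1}$ lies in $\Ker(\phi)=U^{G}(X_{1}\times\cdots\times X_{n-1},X_{n})$ and then appeal to the chain of inclusions displayed just before Proposition~\ref{Sgred} to conclude $c\in U^{G}(X_{1},\ldots,X_{n})$. But that inclusion does not hold in general. Membership in $U^{G}(W,X_{n})$ only says $c_{n}=p_{n}$ and $c_{W}\circ\iota_{W}=\Id_{W}$, i.e.\ $c_{k}(x_{1},\ldots,x_{n-1},*)=x_{k}$ for $k<n$; by contrast $U^{G}(X_{1},\ldots,X_{n})$ demands $c_{k}=c_{k}\circ u_{k}$, i.e.\ that $c_{k}(x_{1},\ldots,x_{n})$ be independent of $x_{1},\ldots,x_{k-1}$ for \emph{all} $x_{n}$. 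Already for $n=3$ a map of the shape $(p_{1},\,p_{2}+\gamma,\,p_{3})$ with $\gamma$ factoring nontrivially through $X_{1}\wedge X_{3}\to X_{2}$ lies in $U^{G}(X_{1}\times X_{2},X_{3})$ but not in $U^{G}(X_{1},X_{2},X_{3})$. Since your $h''=\bar\ell^{\,-1}\circ h\circ(\bar u^{\,-1}\circ p_{W},p_{n})$ involves the unrestricted map $h\colon\bX\to W$ coming out of Theorem~\ref{Mgred}, nothing forces $c_{k}$ to be independent of $x_{1},\ldots,x_{k-1}$, and the argument breaks here. (The paper's own ``Clearly, $\bar u\circ f''\in U^{G}(X_{1},\ldots,X_{n})$'' leans on the same displayed chain with $f''\in U^{G}(X_{1}\times X_{2},X_{3},\ldots,X_{n})$, so this step warrants closer scrutiny in both arguments.)
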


\begin{proof}
Let $f\in L^{G}(X_{1},\ldots,X_{n})\cap U^{G}(X_{1},\ldots,X_{n}).$ Then $$f_{j}=f_{j}\circ l_{j};~~ f_{j}=f_{j}\circ u_{j},~~~ f_{jj}=\Id_{X_{j}},~~ \forall ~~1 \leq j \leq n .$$
Therefore,
\begin{align*}
&~f_{j}=(f_{j}\circ l_{j})\circ u_{j}~~~~(\because f_{j}=f_{j}\circ l_{j})\\
\Rightarrow &~f_{j}=f_{j}\circ \iota_{j}\circ p_{j}~~~~~(\because l_{j}\circ u_{j}=\iota_{j}\circ p_{j})\\
\Rightarrow &~f_{j}=f_{jj}\circ p_{j}\\
\Rightarrow &~f_{j}=p_{j}\\
\Rightarrow &~f=(p_{1},\ldots,p_{n})=\Id_{\nprod}.
\end{align*}
Therefore  $L^{G}(X_{1},\ldots,X_{n})\cap U^{G}(X_{1},\ldots,X_{n})$ is trivial.

We begin the proof with $n=2$, then argue by induction. A map $f\in \AutG(X_{1}\times X_{2})$ can be decomposed as, $$f=[f\circ (f_{1},p_{2})^{-1}\circ (f_{11}\circ p_{1},p_{2})]\circ [(f_{11}\circ p_{1},p_{2})^{-1}\circ (f_{1},p_{2})].$$ From Proposition \ref{Sgred}, we have $(f_{11}\circ p_{1},p_{2})\in L^{G}(X_{1},X_{2})$.
\noindent For $f\in \AutG(X_{1}\times X_{2})$ we have $(f_{1},p_{2})\in \AutG(X_{1}\times X_{2})$, from Corollary \ref{lgred}.
Again $$f\circ (f_{1},p_{2})^{-1}=[f_{1}\circ(f_{1},p_{2})^{-1},f_{2}\circ (f_{1},p_{2})^{-1}].$$
Note that $(f_{1},p_{2})\circ  (f_{1},p_{2})^{-1}=(p_{1},p_{2})\Rightarrow f_{1}\circ  (f_{1},p_{2})^{-1}=p_{1}$.
Therefore $$f\circ (f_{1},p_{2})^{-1}=[p_{1},f_{2}\circ (f_{1},p_{2})^{-1}]\in L^{G}(X_{1},X_{2}).$$
This implies the first factor in the representation of $f$ is in $L^{G}(X_{1},X_{2})$.
\noindent On the other hand, $$(f_{11}\circ p_{1},p_{2})^{-1}\circ (f_{1},p_{2})\in U^{G}(X_{1},X_{2}),$$ because
\begin{align*}
p_{2}\circ (f_{11}\circ p_{1},p_{2})^{-1}\circ (f_{1},p_{2})\circ u_{2}
=&~p_{2}\circ (f^{-1}_{11}\circ p_{1},p_{2})\circ (f_{1},p_{2})\circ u_{2}\\
=&~p_{2}\circ (f^{-1}_{11}\circ p_{1},p_{2})\circ (f_{1}\circ u_{2},p_{2}\circ u_{2})\\
=&~p_{2}\circ (f^{-1}_{11}\circ p_{1},p_{2})\circ (f_{1}\circ u_{2},p_{2})\\
=&~p_{2}\circ [f^{-1}_{11}\circ p_{1}\circ (f_{1}\circ u_{2},p_{2}),p_{2}]\\
=&~p_{2}\\
=&~p_{2}\circ (f^{-1}_{11}\circ p_{1},p_{2})\circ (f_{1},p_{2}).
\end{align*}
Again,
\begin{align*}
p_{1}\circ (f_{11}\circ p_{1},p_{2})^{-1}\circ (f_{1},p_{2})\circ \iota_{1}
=&~p_{1}\circ (f^{-1}_{11}\circ p_{1},p_{2})\circ (f_{1}\circ \iota_{1},p_{2}\circ \iota_{1})\\
=&~p_{1}\circ (f^{-1}_{11}\circ p_{1},p_{2})\circ (f_{11},p_{2}\circ \iota_{1})\\
=&~p_{1}\circ [f^{-1}_{11}\circ p_{1}\circ (f_{11},p_{2}\circ \iota_{1}),p_{2}\circ \iota_{1}]\\
=&~p_{1}\circ (f^{-1}_{11}\circ f_{11},p_{2}\circ \iota_{1})\\
=&~f^{-1}_{11}\circ f_{11}\\
=&~\Id_{X_{1}}.
\end{align*}
And,
\begin{align*}
p_{2}\circ (f_{11}\circ p_{1},p_{2})^{-1}\circ (f_{1},p_{2})\circ \iota_{2}
=&~p_{2}\circ (f^{-1}_{11}\circ p_{1},p_{2})\circ (f_{1}\circ \iota_{2},p_{2}\circ \iota_{2})\\
=&~p_{2}\circ (f^{-1}_{11}\circ p_{1},p_{2})\circ (f_{1}\circ \iota_{2},Id_{X_{2}})\\
=&~p_{2}\circ [f^{-1}_{11}\circ p_{1}\circ (f_{1}\circ \iota_{2},Id_{X_{2}}),Id_{X_{2}}]\\
=&~\Id_{X_{2}}.
\end{align*}
Therefore $(f_{11}\circ p_{1},p_{2})^{-1}\circ (f_{1},p_{2})\in U^{G}(X_{1},X_{2})$.

Let $f\in \AutG(X_{1}\times \ldots \times X_{n})$. If we take $W=X_{1}\times X_{2}$ then $f\in \AutG(W\times X_{3}\times \ldots \times X_{n}).$ By induction hypothesis, there exists $f'\in L^{G}(W,X_{3},\ldots,X_{n})$ and $f''\in U^{G}(W,X_{3},\ldots,X_{n})$ such that $f=f'\circ f''$.
\noindent Since $f'\in L^{G}(W,  X_{3},\ldots,X_{n})\subseteq \AutG(\nprod)$ we have, $p_{12}\circ f'\circ \iota_{12}\in \AutG(W)$ by Proposition \ref{Rtgred}. Let $\bar{f}=p_{12}\circ f'\circ \iota_{12}$, then $\bar{f}=l\circ \hat{u}$ for some $l\in L^{G}(X_{1},X_{2})$ and $\hat{u}\in U^{G}(X_{1},X_{2})$.
Let $$\bar{u}:=(\hat{u}\circ p_{12},p_{3},\ldots,p_{n})=(\hat{u}_{1}\circ p_{12},\hat{u}_{2}\circ p_{12},p_{3},\ldots,p_{n}).$$

\noindent{\bf Claim:} $\bar{u}\in U^{G}(X_{1},\ldots,X_{n}).$

\begin{align*}
\hat{u}_{1}\circ p_{12}\circ \iota_{1}&=\hat{u}_{1}\circ \iota'_{1} 
= \hat{u}_{11}
= \Id_{X_{1}}
~~~(\because p_{12}\circ \iota_{1}=\iota'_{1}).
\end{align*}
\begin{align*}
\hat{u}_{2}\circ p_{12}\circ \iota_{2} =\hat{u}_{2}\circ \iota'_{2}
= \hat{u}_{22}
= \Id_{X_{2}}
~~~~(\because p_{12}\circ \iota_{2}=\iota'_{2}).
\end{align*}
$$p_{{j}}\circ i_{{j}}= \Id_{X_{j}},  ~~3\leq j\leq n.$$
And
\begin{align*}
\hat{u}_{2}\circ p_{12}\circ u_{2}&=\hat{u}_{2}\circ u'_{2}\circ p_{12}~~~~~~(\because p_{12}\circ u_{2}=u'_{2}\circ p_{12})\\
&=\hat{u}_{2}\circ p_{12}~~~(\because \hat{u}=(\hat{u}_{1},\hat{u}_{2})\in U^{G}(X_{1},X_{2})).
\end{align*}
$$p_{j}\circ u_{j}= p_{j},  ~~3\leq j \leq n.$$
Therefore the claim is proved. 
\noindent Clearly $\bar{u}^{-1}=(u^{-1}\circ p_{12},p_{3},\ldots,p_{n})\in L^{G}(X_{1}\times X_{2},\ldots,X_{n})$. Hence $f'\circ \bar{u}^{-1}\in L^{G}(X_{1}\times X_{2},\ldots,X_{n})$.
Since $f'\in L^{G}(X_{1}\times X_{2},\ldots,X_{n})$, we have,
$$p_{12}\circ f'=p_{12}\circ f'\circ l_{12}=p_{12}\circ f'\circ \iota_{12}\circ p_{12}~~~~~~(\because \iota_{12}\circ p_{12}=l_{12}).$$
To prove $f'\circ \bar{u}^{-1}\in L^{G}(X_{1},\ldots,X_{n})$, it is sufficient to show that $p_{12}\circ (f'\circ \bar{u}^{-1})\circ \iota_{12}\in L^{G}(X_{1}\times X_{2})$. Now,

$$
p_{12}\circ f'\circ \bar{u}^{-1}\circ \iota_{12} =p_{12}\circ f'\circ \iota_{12}\circ p_{12}\circ \bar{u}^{-1}\circ \iota_{12} =p_{12}\circ f' \circ \iota_{12}\circ u^{-1} =\bar{f}\circ u^{-1} =l.
$$

%\begin{align*}
%p_{12}\circ f'\circ \bar{u}^{-1}\circ \iota_{12}&=p_{12}\circ f'\circ \iota_{12}\circ p_{12}\circ \bar{u}^{-1}\circ \iota_{12}\\
%&=p_{12}\circ f'\circ \iota_{12}\circ u^{-1}\\
%&=\bar{f}\circ u^{-1}\\
%&=l.
%\end{align*}
\noindent Therefore $p_{12}\circ f'\circ \bar{u}^{-1}\circ \iota_{12}\in L^{G}(X_{1}\times X_{2})$. Consequently we have $f'\circ \bar{u}^{-1}\in L^{G}(X_{1},\ldots,X_{n})$. Clearly, $\bar{u}\circ f''\in U^{G}(X_{1},\ldots,X_{n})$. Hence $f=(f'\circ \bar{u}^{-1})\circ (\bar{u}\circ f'')$.
			
\end{proof}

Let $$\bar{L}^{G}(X_{1},\ldots,X_{n}):=\{f\in L^{G}(X_{1},\ldots,X_{n})| ~ f_{kk}=\Id_{X_{k}}, ~~1\leq k \leq n \}.$$

\begin{proposition}
The group $L^{G}(X_{1},\ldots,X_{n})$ is isomorphic to the semi-direct product of $\AutG(X_{1})\times \ldots \times \AutG(X_{n})$ with  $\bar{L}^{G}(X_{1},\ldots,X_{n})$.
\end{proposition}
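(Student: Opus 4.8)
The plan is to realise $L^{G}(X_{1},\ldots,X_{n})$ as a split extension. Keeping the standing assumption (as in Proposition~\ref{Sgred}) that all $G$-self-homotopy equivalences of $\nprod$ are $G$-reducible, so that $L^{G}(X_{1},\ldots,X_{n})$ is a group and every $f$ in it satisfies $f_{kk}\in\AutG(X_{k})$, I would consider the map
$$\theta\colon L^{G}(X_{1},\ldots,X_{n})\longrightarrow \AutG(X_{1})\times\cdots\times\AutG(X_{n}),\qquad \theta(f)=(f_{11},\ldots,f_{nn}),$$
and aim to prove it is a surjective group homomorphism with kernel exactly $\bar{L}^{G}(X_{1},\ldots,X_{n})$ and admitting a section. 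The assertion then follows, since a split short exact sequence $0\to\Ker(\theta)\to L^{G}(X_{1},\ldots,X_{n})\xrightarrow{\theta}\AutG(X_{1})\times\cdots\times\AutG(X_{n})\to 0$ presents the middle group as the semidirect product of the quotient with the (normal) kernel.

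First I would check that $\theta$ is multiplicative. Recall that $f\in L^{G}(X_{1},\ldots,X_{n})$ satisfies $f_{kj}=0$ for $k<j$. For $f,g\in L^{G}(X_{1},\ldots,X_{n})$ I would compute, using $f_{k}=f_{k}\circ l_{k}$,
$$(f\circ g)_{kk}=p_{k}\circ f\circ g\circ \iota_{k}=f_{k}\circ g\circ \iota_{k}=f_{k}\circ l_{k}\circ g\circ \iota_{k},$$
and observe that the $j$-th coordinate of $l_{k}\circ g\circ\iota_{k}$ equals $g_{jk}=0$ for $j<k$, equals $g_{kk}$ for $j=k$, and is $\ast$ for $j>k$; hence $l_{k}\circ g\circ\iota_{k}=\iota_{k}\circ g_{kk}$, and therefore $(f\circ g)_{kk}=f_{k}\circ\iota_{k}\circ g_{kk}=f_{kk}\circ g_{kk}$, i.e.\ $\theta(f\circ g)=\theta(f)\circ\theta(g)$.

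Next I would exhibit the section $s(h_{1},\ldots,h_{n})=(h_{1}\circ p_{1},\ldots,h_{n}\circ p_{n})$. Since $p_{k}\circ l_{k}=p_{k}$ one has $h_{k}\circ p_{k}=h_{k}\circ p_{k}\circ l_{k}$, and the $k$-th diagonal entry of $(h_{1}\circ p_{1},\ldots,h_{n}\circ p_{n})$ is $h_{k}\circ p_{k}\circ\iota_{k}=h_{k}\in\AutG(X_{k})$, so Corollary~\ref{Sher} makes this a $G$-self-homotopy equivalence of $\nprod$, hence an element of $L^{G}(X_{1},\ldots,X_{n})$. A short computation using $p_{k}\circ(h'_{1}\circ p_{1},\ldots,h'_{n}\circ p_{n})=h'_{k}\circ p_{k}$ shows $s$ is a homomorphism, and $p_{k}\circ\iota_{k}=\Id_{X_{k}}$ gives $\theta\circ s=\Id$, so $\theta$ is onto and split. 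Finally, by definition $\Ker(\theta)=\{f\in L^{G}(X_{1},\ldots,X_{n}):f_{kk}=\Id_{X_{k}},\ 1\le k\le n\}=\bar{L}^{G}(X_{1},\ldots,X_{n})$, which is thus a normal subgroup, and we conclude $L^{G}(X_{1},\ldots,X_{n})\cong \bar{L}^{G}(X_{1},\ldots,X_{n})\rtimes\bigl(\AutG(X_{1})\times\cdots\times\AutG(X_{n})\bigr)$.

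I expect the one genuinely substantive point to be the multiplicativity of $\theta$: everything there rests on the lower-triangular shape of elements of $L^{G}$ forcing $l_{k}\circ g\circ\iota_{k}=\iota_{k}\circ g_{kk}$. The well-definedness of the section (which needs Corollary~\ref{Sher}), the identification of the kernel, and the passage from a split exact sequence to a semidirect product are then routine.
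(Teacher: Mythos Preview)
Your proposal is correct and follows essentially the same route as the paper: define the diagonal map $\theta(f)=(f_{11},\ldots,f_{nn})$, verify it is a homomorphism via the lower-triangular condition $f_{k}=f_{k}\circ l_{k}$, identify $\Ker(\theta)=\bar{L}^{G}(X_{1},\ldots,X_{n})$, and split it by $(h_{1},\ldots,h_{n})\mapsto(h_{1}\circ p_{1},\ldots,h_{n}\circ p_{n})$ using Corollary~\ref{Sher}. The only cosmetic difference is that the paper expands $(f\circ g)_{jj}$ by first writing $g\circ\iota_{j}$ componentwise and then applying $f_{j}\circ l_{j}$, whereas you package this as $l_{k}\circ g\circ\iota_{k}=\iota_{k}\circ g_{kk}$; these are the same computation.
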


\begin{proof}
Consider a map $$\varphi:L^{G}(X_{1},\ldots,X_{n})\to \AutG(X_{1})\times \ldots \times \AutG(X_{n}),$$ defined as $$f\mapsto (f_{11},\ldots,f_{nn}).$$

Now,
\begin{align*}
(f\circ g)_{jj}&=p_{j}\circ (f\circ g)\circ \iota_{j}\\
&=p_{j}\circ f\circ (g_{1},\ldots,g_{n})\circ \iota_{j}\\
&=p_{j}\circ f\circ (g_{1}\circ l_{1},\ldots,g_{n}\circ l_{n})\circ \iota_{j}\\
&=p_{j}\circ f\circ (g_{1}\circ l_{1}\circ i_{j},\ldots,g_{n}\circ l_{n}\circ \iota_{j})\\
&=p_{j}\circ f\circ (*,\ldots,*,g_{j}\circ \iota_{j},\ldots,g_{n}\circ \iota_{j})~~~(\because l_{k}\circ \iota_{j}=*, \forall k<j)\\
&=f_{j}\circ (*,\ldots,*,g_{j}\circ \iota_{j},\ldots,g_{n}\circ \iota_{j})\\
&=f_{j}\circ l_{j}\circ (*,\ldots,*,g_{j}\circ \iota_{j},\ldots,g_{n}\circ \iota_{j})\\
&=f_{j}\circ (*,\ldots,*,g_{j}\circ \iota_{j},*,\ldots,*)\\
&=f_{j}\circ \iota_{j}\circ g_{jj}  =f_{jj}\circ g_{jj}.
\end{align*}
		
Therefore,
\begin{align*}
\varphi (f\circ g) =((f\circ g)_{11},\ldots,(f\circ g)_{nn}) 
 =(f_{11}\circ g_{11},\ldots,f_{nn}\circ g_{nn})  \\
 =(f_{11},\ldots,f_{nn})\circ (g_{11},\ldots,g_{nn}) 
 =\varphi (f)\circ \varphi (g).
\end{align*}

So,
\begin{align*}
\Ker(\varphi)&=\big\{ f\in L^{G}(X_{1},\ldots,X_{n})|~~f_{jj} = \Id_{X_{j}},~\forall ~~1\leq j \leq n \big\}\\
&=\bar{L}^{G}(X_{1},\ldots,X_{n}).
\end{align*}
Now, $$\Psi:\AutG(X_{1})\times \ldots \times \AutG(X_{n})\to L^{G}(X_{1},\ldots,X_{n})$$ is the splitting map of $\varphi$ defined as $$\Psi (g_{1},\ldots,g_{n}):=(g_{1}\circ p_{1},\ldots,g_{n}\circ p_{n}).$$ Using the Corollary \ref{Sher}, we can say that the map $\Psi$ is well-defined.  Therefore we get a split short exact sequence, $$0\to \bar{L}^{G}(X_{1},\ldots,X_{n})\to L^{G}(X_{1},\ldots,X_{n})\xrightarrow[]{\varphi} \AutG(X_{1})\times \ldots \times \AutG(X_{n})\to 0.$$ Hence $L^{G}(X_{1},\ldots,X_{n})\cong (\AutG(X_{1})\times \ldots \times \AutG(X_{n}))\rtimes \bar{L}^{G}(X_{1},\ldots,X_{n})$

\end{proof}

\end{document}